\numberwithin{equation}{section}
\numberwithin{figure}{section}
\theoremstyle{plain}
\newtheorem{thm}{\protect\theoremname}[section]
  \theoremstyle{plain}
  \newtheorem{lem}[thm]{\protect\lemmaname}
  \theoremstyle{plain}
  \newtheorem{prop}[thm]{\protect\propositionname}
  \theoremstyle{plain}
  \newtheorem{cor}[thm]{\protect\corollaryname}
  \theoremstyle{definition}
  \newtheorem{defn}[thm]{\protect\definitionname}
  \theoremstyle{remark}
  \newtheorem{rem}[thm]{\protect\remarkname}
  \theoremstyle{remark}
  \newtheorem{claim}[thm]{\protect\claimname}
  \providecommand{\claimname}{Claim}
  \providecommand{\corollaryname}{Corollary}
  \providecommand{\definitionname}{Definition}
  \providecommand{\lemmaname}{Lemma}
  \providecommand{\propositionname}{Proposition}
  \providecommand{\remarkname}{Remark}
\providecommand{\theoremname}{Theorem}
\begin{document}

\title{Schwartz Functions on Quasi-Nash Varieties}

\author{Boaz Elazar}

\address{Dept. of Mathematics, The Weizmann Institute of Science, Rehovot
76100, Israel}

\email{boaz.elazar@weizmann.ac.il}
\begin{abstract}
We introduce a new category called Quasi-Nash, unifying Nash manifolds
and algebraic varieties. We define Schwartz functions, tempered functions
and tempered distributions in this category. We show that properties
that hold on affine spaces, Nash manifolds and algebraic varieties,
also hold in this category.
\end{abstract}

\maketitle

\section{Introduction}

Schwartz functions are named after Laurent Schwartz, who defined them
in $\mathbb{R}^{n}$. In $\mathbb{R}^{n}$ they are usually defined
as smooth functions which decay to zero with all their derivatives
faster than the inverse of any polynomial when reaching infinity.
We say $f$ is a Schwartz function on $\mathbb{R}$, for example,
if for any $n,k\in\mathbb{N}\cup\left\{ 0\right\} $ we have $|x^{n}f^{\left(k\right)}|<\infty$
where $f^{\left(k\right)}$ is the $k$'th derivative of $f$. The
space of all Schwartz functions on $\mathbb{R}^{n}$ will be denoted
by $\mathcal{S}\left(\mathbb{R}^{n}\right)$. Schwartz functions on
$\mathbb{R}^{n}$ have some nice properties such as: $\mathcal{S}\left(\mathbb{R}^{n}\right)$
is a Fréchet space, $\mathcal{S}\left(\mathbb{R}^{n}\right)$ is invariant
under Fourier transform and every function in $\mathcal{S}\left(\mathbb{R}^{n}\right)$
is integrable.

Later, in {[}dC, AG{]}, Schwartz functions were defined on Nash manifolds,
which are smooth semi-algebraic varieties. For a Nash manifold $M$,
$f$ is said to be Schwartz on $M$ if for any Nash differential operator
$D$ we have $||Df||_{\infty}<\infty$. Nash differential operator
on $M$ means an element of the algebra generated by multyplying by
Nash functions and by deriving along Nash sections of the tangent
bundle.

Lately, in {[}ES{]} we defined Schwartz functions on real algebraic
varieties which might have singularities, and showed how the affine
algebraic varieties share the properties of Schwartz functions on
Nash manifolds. We also showed that some of the results hold in the
general case. 

In this paper we define a category such that both the Nash manifolds
and the algebraic varieties are subcategories of the new one. Moreover,
this category enables us to prove the rest of the claims about Schwartz
functions on general algebraic varieties. We call this new category
Quasi-Nash, or QN, where its affine objects correspond to semi-algebraic
subsets of $\mathbb{R}^{n}$ and the morphisms are locally restrictions
of Nash maps. A general variety is defined as a glueing of open affine
varieties. This new category slightly extends the category of Nash
varieties.

The definitions of this category, and some usefull lemmas appear in
section 3.

The main results about Schwartz functions in this paper include:
\begin{lem}
Isomorphic QN varieties $X_{1}\cong X_{2}$, imply an isomorphism
of the Fréchet spaces $\mathcal{S}\left(X_{1}\right)\cong\mathcal{S}\left(X_{2}\right)$
where $\mathcal{S}\left(X_{i}\right)$ is the space of Schwartz functions
on $X_{i}$ (Lemma \ref{lem-First-Result}).
\end{lem}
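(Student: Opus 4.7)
The plan is to construct the claimed isomorphism as the pullback $\phi^{\ast}: \mathcal{S}(X_{2}) \to \mathcal{S}(X_{1})$ along a given QN isomorphism $\phi: X_{1} \to X_{2}$, and to verify both that it lands in $\mathcal{S}(X_{1})$ and that it, together with its obvious candidate inverse $(\phi^{-1})^{\ast}$, is continuous with respect to the Fr\'echet structures. Since $\phi$ is bijective with QN inverse, once continuity in both directions is established, $\phi^{\ast}$ and $(\phi^{-1})^{\ast}$ are mutually inverse continuous linear maps between Fr\'echet spaces, which is exactly the desired isomorphism.

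First I would reduce to the affine setting. A general QN variety is defined by gluing affine QN varieties along open sub-objects, and the QN isomorphism $\phi$ restricts to QN isomorphisms between matching affine opens of $X_{1}$ and $X_{2}$ (possibly after passing to a common refinement of affine covers). Using the sheaf-type description of $\mathcal{S}$ that should be established in Section~3 --- namely that a global Schwartz function is determined by its restrictions to an affine open cover together with tempered gluing data, presumably assembled via a tempered partition of unity --- it suffices to verify the topological isomorphism on each corresponding pair of affine pieces, since the Fr\'echet structure on $\mathcal{S}(X_{i})$ will be the one induced by this local description.

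In the affine case, $X_{1}$ and $X_{2}$ are semi-algebraic subsets of some $\mathbb{R}^{n_{1}}$ and $\mathbb{R}^{n_{2}}$, and $\phi$ is (locally) a restriction of a Nash map. The Fr\'echet topology on $\mathcal{S}(X_{i})$ is defined by seminorms built from Nash (or tempered) differential operators on the ambient affine space. The key technical step is to show that for every such seminorm $\lVert \cdot \rVert_{\alpha}$ on $\mathcal{S}(X_{1})$ there exist finitely many seminorms $\lVert \cdot \rVert_{\beta_{i}}$ on $\mathcal{S}(X_{2})$ and a constant $C>0$ such that $\lVert \phi^{\ast} f \rVert_{\alpha} \leq C \sum_{i} \lVert f \rVert_{\beta_{i}}$ for all $f \in \mathcal{S}(X_{2})$. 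This reduces to the fact that pullback of a Nash differential operator along a Nash map is again expressible as a finite combination of Nash differential operators on the source, together with the standard chain-rule estimates, both of which live in the Nash manifold literature cited as [dC, AG].

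The main obstacle is the reduction from the global QN variety to the affine charts: one must ensure that the Schwartz space on a glued QN variety is really characterized by its restrictions to an affine cover, and that seminorm estimates established locally assemble into global estimates. This will likely rely on existence of tempered partitions of unity subordinate to QN affine covers, which I would expect to be proved among the ``usefull lemmas'' in Section~3. Granting that, the remainder of the argument is formal: symmetry between $\phi$ and $\phi^{-1}$ takes care of continuity in the opposite direction, and bijectivity of $\phi$ gives bijectivity of $\phi^{\ast}$ on the level of function spaces.
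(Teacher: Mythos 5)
Your global strategy (pull back along $\varphi$, reduce to the affine case via a cover and the (co)sheaf structure of $\mathcal{S}$, then do the affine case directly) matches the paper's architecture: the paper reduces via Lemma \ref{lem-gen-QN-equiv}, the cosheaf property, and cover-independence (Lemma \ref{lem-Sch-cover-indep}) to the affine NQN statement, Lemma \ref{lem-NQN-Frechet-iso}. The gap is entirely in your treatment of the affine case, and it is a genuine one.

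You propose to establish the affine isomorphism by seminorm estimates coming from the chain rule: pull back Nash differential operators along a Nash extension $g_{1}\colon U_{1}\to\mathbb{R}^{n_{2}}$ of $\varphi$ and bound. This does not work. For singular $X_{1}\cong X_{2}$, the Schwartz space $\mathcal{S}(X_{i})$ is defined as the quotient $\mathcal{S}(U_{i})/I_{Sch}^{U_{i}}(X_{i})$ with the quotient topology, and there is no intrinsic system of seminorms coming from differential operators on $X_{i}$ because $X_{i}$ has no tangent bundle. More seriously, the Nash map $g_{1}$ extending $\varphi$ is not unique, need not be injective, need not be proper, and in general does not carry $U_{1}$ onto $U_{2}$ by a Nash diffeomorphism --- the paper itself flags this: ``in general $g_{1}$ is not a bijection and $U_{1}\ncong U_{2}$.'' Consequently $F\circ g_{1}$ for $F\in\mathcal{S}(U_{2})$ need not lie in $\mathcal{S}(U_{1})$: if $X_{1}=X_{2}=\{0\}\subset\mathbb{R}$, $\varphi=\mathrm{id}$, and one takes $g_{1}\equiv 0$, then $F\circ g_{1}$ is the constant $F(0)$, which is not Schwartz on $\mathbb{R}$ when $F(0)\neq0$. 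So there is no way to run the chain-rule estimate argument off the ambient extension $g_{1}$ alone.

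The paper's actual affine argument circumvents this by a graph construction. Set $\alpha(x,y):=(x,y+g_{1}(x))$, a Nash automorphism of $U_{1}\times\mathbb{R}^{n_{2}}$. Then $\hat{U}_{1}:=\alpha(U_{1}\times\{0\})$ is a closed Nash submanifold of $U_{1}\times\mathbb{R}^{n_{2}}$ Nash-isomorphic to $U_{1}$, and $\alpha(X_{1}\times\{0\})=(\mathrm{Id}\times\varphi)(X_{1})$ is the graph of $\varphi$. Using Theorem \ref{AG-thm-red-to-cl-is-onto} and Claim \ref{claim-Sch-from-open} one obtains
\[
\mathcal{S}(X_{1})\cong\mathcal{S}(U_{1}\times U_{2})\big/ I_{Sch}^{U_{1}\times U_{2}}\bigl((\mathrm{Id}\times\varphi)(X_{1})\bigr),
\]
and symmetrically the same quotient for $\mathcal{S}(X_{2})$, since $(\mathrm{Id}\times\varphi)(X_{1})=(\varphi^{-1}\times\mathrm{Id})(X_{2})$ is the same subset of $U_{1}\times U_{2}$. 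Continuity and bijectivity are then automatic, with no chain-rule estimate required. Your write-up needs this idea --- or something equivalent that handles the fact that $\varphi$ extends to a Nash map of ambient neighborhoods but not to a Nash isomorphism of them.
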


The next Proposition deals with tempered functions. Informally, a
tempered function on $\mathbb{R}^{n}$ is a smooth function bounded
by a polynomial, and so does any of its derivatives. We define a tempered
function on a QN variety later on.
\begin{prop}
\label{prop-elor}(Tempered partition of unity) Let $\{V_{i}\}_{i=1}^{m}$
be a finite open cover of a QN variety $X$. Then, there exist tempered
functions $\{\beta_{i}\}_{i=1}^{m}$ on $X$, such that $supp(\beta_{i})\subset V_{i}$
and $\sum\limits _{i=1}^{m}\beta_{i}=1$ (Proposition \ref{prop-part-of-unity}).
\end{prop}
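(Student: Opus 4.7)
The plan is to reduce the statement to the affine case via the gluing structure of QN varieties, then to construct bump functions by transporting the Nash-manifold tempered partition of unity result through an ambient semi-algebraic extension, and finally to normalize to obtain a true partition.

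First, I would exploit the fact that $X$ is by definition glued from finitely many open affine QN subvarieties $\{U_j\}_{j=1}^{N}$. Suppose we have (a) tempered partitions $\{\gamma_{ij}\}_i$ on each $U_j$ subordinate to the refined cover $\{V_i \cap U_j\}_i$, and (b) a tempered partition $\{\alpha_j\}_j$ on $X$ subordinate to the affine cover $\{U_j\}_j$. Then the functions $\beta_i := \sum_j \alpha_j \gamma_{ij}$, with each summand extended by zero outside $U_j$, give the desired partition on $X$: locality of temperedness together with $\mathrm{supp}(\alpha_j) \subset U_j$ ensures the extensions remain tempered on $X$, while $\sum_i \beta_i = \sum_j \alpha_j \sum_i \gamma_{ij} = \sum_j \alpha_j = 1$. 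So the problem splits into case (a), the affine case, and case (b), the special situation where the cover is the distinguished affine chart cover.

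For the affine case, $X$ is a semi-algebraic subset of some $\mathbb{R}^n$ and the $V_i$ are open semi-algebraic subsets of $X$. I would first shrink the cover, using semi-algebraic distance functions, to closed semi-algebraic sets $F_i \subset V_i$ whose union still covers $X$. Thickening each $V_i$ to an open semi-algebraic $\tilde V_i$ inside a Nash manifold neighborhood $M \supset X$ in $\mathbb{R}^n$, with $\tilde V_i \cap X = V_i$ and $F_i \subset \tilde V_i$, I would apply the Nash tempered partition of unity from [dC, AG] to the open cover $\{\tilde V_i\} \cup \{M \setminus \bigcup_i F_i\}$ of $M$. Restricting the resulting Nash-tempered bumps to $X$ produces non-negative tempered functions $\alpha_i$ with $\mathrm{supp}(\alpha_i) \subset V_i$ and $\sum_i \alpha_i \geq 1$ on $X$, since the ``garbage'' member of the Nash partition vanishes on each $F_i$ and hence on $X$. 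Setting $\beta_i := \alpha_i / \sum_j \alpha_j$ yields the required partition, using that tempered functions form an algebra closed under division by functions bounded below by a positive constant.

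The main obstacle I anticipate is verifying compatibility of the ambient Nash construction with the QN structure on a possibly singular $X$: one must check that the restriction of a tempered function on $M$ to $X$ is tempered in the QN sense, that this property is preserved under gluing of charts, and that case (b) of the reduction — constructing a tempered partition subordinate to the affine chart cover — can itself be carried out, most likely by a separate direct argument using Nash functions built from the transition data together with a shrinking of the chart cover. The technical bookkeeping needed to coordinate the shrinking, the thickening, and the chart overlaps is where the bulk of the work will lie.
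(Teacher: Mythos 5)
Your decomposition $\beta_i := \sum_j \alpha_j \gamma_{ij}$ has a circularity problem that you acknowledge but do not resolve, and it is precisely where the whole difficulty lives. Ingredient (b) — a tempered partition $\{\alpha_j\}$ on the \emph{non-affine} variety $X$ subordinate to the affine chart cover $\{U_j\}$ — is itself an instance of the proposition, and not a trivial one: it is the same statement with $\{V_i\}$ replaced by $\{U_j\}$. Your affine case (a) does not apply to it because $X$ is not affine, so you are left exactly where you started. The remark that it ``can itself be carried out, most likely by a separate direct argument using Nash functions built from the transition data'' names the hole without filling it; that ``separate direct argument'' is the real content of the theorem.

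The paper sidesteps this by constructing the partition directly on $X$ rather than combining an affine partition with a chart-cover partition. It intersects the given cover $\{V_i\}$ with an affine chart cover $\{V_j\}$ to form the doubly-indexed affine pieces $V_{ij} := V_i \cap V_j$, shrinks properly (Proposition~\ref{prop-proper-ref}), and then — this is the crucial device — expresses the shrunk sets as nonvanishing loci of \emph{basic} semi-algebraic functions $G_{ijk}$ on the affine $V_{ij}$ (Corollary~\ref{cor-basis-Mf}). Because each $G_{ijk}$ has support properly contained in $V_{ij}$, it extends by zero to a continuous function on all of $X$ without any appeal to a pre-existing partition on $X$. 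After a majoration step (Lemma~\ref{lem-majoration}) and composition with a smooth bump $\rho$, one obtains smooth tempered functions globally on $X$, normalizes, and groups by the index $i$. In short: the paper never needs a partition subordinate to the chart cover as a separate ingredient, whereas your reduction requires one. Your treatment of the affine case itself is sound and does parallel what the paper cites from [ES], including the ambient Nash thickening and normalization (though $\sum_i \alpha_i$ equals $1$ on $X$ exactly, not merely $\geq 1$, since the garbage member vanishes on $X$); the gap is entirely in the passage from affine to general.
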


\begin{thm}
\label{thm-hardest}For a QN variety $X$ and a closed subset $Z\subset X$,
define $U:=X\setminus Z$ and $W_{Z}:=\{\phi\in\mathcal{S}(X)|\phi\text{ is flat on }Z\}$.
Then $W_{Z}$ is a closed subspace of $\mathcal{S}(X)$ (and so it
is a Fréchet space), and extension by zero $\mathcal{S}(U)\to W_{Z}$
is an isomorphism of Fréchet spaces, whose inverse is the restriction
of functions (Theorem \ref{thm-general-char}).
\end{thm}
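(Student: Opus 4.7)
The plan is to reduce the statement to the affine case established in [ES] by means of the tempered partition of unity from Proposition \ref{prop-elor}. For the first assertion, flatness of $\phi$ on $Z$ is characterized by the vanishing of $(D\phi)(z)$ for every QN (or tempered) differential operator $D$ and every $z\in Z$. Each such evaluation is a continuous linear functional on the Fréchet space $\mathcal{S}(X)$, so $W_Z$ is an intersection of closed subspaces and hence closed; by the standard fact that a closed subspace of a Fréchet space is Fréchet, the first claim follows.

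Next I would verify that extension by zero $\mathrm{ext}\colon\mathcal{S}(U)\to W_Z$ is well defined and continuous. Cover $X$ by finitely many affine QN open subsets $\{V_i\}_{i=1}^{m}$ and let $\{\beta_i\}$ be a tempered partition of unity subordinate to it. Given $\psi\in\mathcal{S}(U)$, decompose $\psi=\sum_i(\beta_i|_U)\psi$; each summand is supported in $V_i\cap U$, and the affine QN case from [ES] shows that its extension by zero to $V_i$ is Schwartz on $V_i$ and flat on $Z\cap V_i$. Pushing these forward to $X$ (they have support in $V_i$) and summing produces $\mathrm{ext}(\psi)\in W_Z$; continuity of multiplication by tempered functions and of the affine extension-by-zero map yields continuity of $\mathrm{ext}$ in the Fréchet topologies.

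For the converse direction, given $\phi\in W_Z$ decompose $\phi=\sum_i \beta_i\phi$; each $\beta_i\phi$ lies in $\mathcal{S}(X)$, is supported in $V_i$, and remains flat on $Z$. Restricting to $V_i$ and applying the affine case yields a Schwartz function on $V_i\setminus(Z\cap V_i)=V_i\cap U$ whose zero-extension to $V_i$ equals $(\beta_i\phi)|_{V_i}$. Gluing these local restrictions along $U$ and summing produces $\phi|_U\in\mathcal{S}(U)$ with $\mathrm{ext}(\phi|_U)=\phi$, so restriction is a two-sided inverse. The open mapping theorem then promotes this continuous bijection between Fréchet spaces to an isomorphism.

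The principal obstacle will be the compatibility of the partition-of-unity decomposition with the global definition of $\mathcal{S}(U)$: one must check that the pieces $(\beta_i\phi)|_{V_i\cap U}$, which are Schwartz on each chart, actually assemble into a genuine global Schwartz function on $U$ rather than a merely smooth one, and that this assembly is independent of the chosen cover and partition. This is precisely the point at which the QN-sheaf definition of Schwartz functions must be exploited, so the proof will ultimately rest on having the correct local-to-global characterization of $\mathcal{S}(U)$ in place from the preceding sections.
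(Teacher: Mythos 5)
Your strategy---reduce to the affine case (Theorem \ref{thm-affine-char}) via the tempered partition of unity (Proposition \ref{prop-part-of-unity}), then invoke the Banach open mapping theorem---is exactly the paper's strategy, and the decompositions $\phi = \sum_i \beta_i\phi$ and $\psi = \sum_i (\beta_i|_U)\psi$ are the same ones used in the paper. The ``principal obstacle'' you flag at the end (that the pieces $(\beta_i\phi)|_{V_i\cap U}$ must actually lie in $\mathcal{S}(V_i\cap U)$ so that the sum of their extensions is a genuine element of $\mathcal{S}(U)$) is precisely what part (2) of Proposition \ref{prop-part-of-unity} and Remark \ref{rem-temp-Sch-is-Sch} are designed to resolve, so the worry is already handled by the preceding sections of the paper rather than being an open issue.

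Two small cautions. First, the closedness argument via ``vanishing of $(D\phi)(z)$ for every QN differential operator $D$'' should not be taken at face value on a singular QN variety, where intrinsic differential operators are not what the paper's definition of flatness uses; the paper's definition is that the function admits a smooth extension to an ambient open set with vanishing Taylor series at the point. The safer route (which the paper takes) is to observe directly that for each fixed $z$ the set $\{\phi\in\mathcal{S}(X)\mid\phi\text{ flat at }z\}$ is closed and intersect over $z\in Z$. Second, the affine characterization you cite as ``from [ES]'' is really Theorem \ref{thm-affine-char} of this paper (which extends [ES] from the algebraic to the QN setting); the affine QN statement is not literally in [ES].
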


It should be noted that the proof of Theorem \ref{thm-hardest} was
the hardest to prove. Usually, a function $f:\mathbb{R}^{n}\to\mathbb{R}$
is said to be flat at a point $p\in\mathbb{R}^{n}$ if $f$, and all
its derivatives, vanish at $p$. We had to make clear what it means
for a function on a singular variety to be flat on a singular point
$p$. For subvarieties, of $\mathbb{R}^{n}$ we defined it to be a
restriction of a smooth function on $\mathbb{R}^{n}$ which is flat
at $p$. There are several other approaches to do so (see {[}BMP2,
F{]} for example), but {[}BMP2{]} shows they are all equivalent in
our case. Then, Theorem \ref{thm-hardest} turns out to be a Whitney
type extension problem. We proved it using subanalytical geometry
results in {[}BM1, BM2, BMP1, BMP2{]}. Theorem \ref{thm-hardest}
also enables us to define Schwarz functions by a local condition rather
then by the global ones we used. Instead of demanding a function that
decays ``fast at infinity'', we just have to demand a smooth function
that is flat on the points ``added at infinity'' in some compactification
process.

Furthermore, Theorem \ref{thm-hardest} gives us one more important
result regarding tempered distributions. The space of tempered distributions
is the space of linear continuous functionals on $\mathcal{S}\left(X\right)$.
It is denoted by $\mathcal{S}^{*}\left(X\right)$. Theorem \ref{thm-hardest}
implies that for any open $U\subset X$, the restriction morphism
of tempered distributions $\mathcal{S}^{*}(X)\to\mathcal{S}^{*}(U)$
is onto. This is not the case for general distributions. E.g. take
the compactification of $\mathbb{R}$ into a circle. The distribution
$e^{x}dx$ on $\mathbb{R}$ cannot be extended to the circle.
\begin{cor}
\label{cor-eimim}Let $X$ be a QN variety. Then the assignment of
the space of Schwartz functions (respectively tempered functions,
tempered distributions) to any open $U\subset X$, together with the
extension by zero $Ext_{U}^{V}$ from $U$ to any other open $V\supset U$
(restriction of functions, restrictions of functionals from $\mathcal{S}^{*}(V)$
to $\mathcal{S}^{*}(U)$), form a flabby cosheaf (sheaf, flabby sheaf)
on $X$ (Corollaries \ref{cor-Sch-cosheaf}, \ref{cor-tempered-sheaf},
\ref{cor-Dist-sheaf}). 
\end{cor}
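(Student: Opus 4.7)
The plan is to handle the three cosheaf/sheaf assertions in turn, all three leaning on the two main tools just stated: the tempered partition of unity (Proposition~\ref{prop-elor}), which lets us localize, and the flatness--extension isomorphism of Theorem~\ref{thm-hardest}, which identifies $\mathcal{S}(U)$ with the closed subspace $W_{V\setminus U}\subset\mathcal{S}(V)$ whenever $U\subset V$ is an open inclusion in $X$.

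For the Schwartz cosheaf I would first note that extension by zero $Ext_U^V:\mathcal{S}(U)\to\mathcal{S}(V)$ is a well-defined continuous injection, both claims being immediate from Theorem~\ref{thm-hardest}, and that injectivity is the flabbiness condition for a cosheaf. For the cosheaf gluing axiom on a finite open cover $U=\bigcup_{i=1}^m U_i$, pick a tempered partition of unity $\{\beta_i\}$ subordinate to $\{U_i\}$. Given $\phi\in\mathcal{S}(U)$, each product $\beta_i\phi$ is Schwartz on $U$ and flat on $U\setminus U_i$, so by Theorem~\ref{thm-hardest} it comes from a unique $\phi_i\in\mathcal{S}(U_i)$ with $\phi=\sum_i Ext_{U_i}^U(\phi_i)$; this yields surjectivity of $\bigoplus_i\mathcal{S}(U_i)\to\mathcal{S}(U)$. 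For the kernel, if $\sum_i Ext_{U_i}^U(\phi_i)=0$, then multiplying by each $\beta_j$ and re-applying Theorem~\ref{thm-hardest} on $U_i\cap U_j$ realizes the cycle as a \v Cech boundary, establishing exactness at the middle term.

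For the tempered sheaf, restriction preserves temperedness by the locality of the defining growth conditions, so the presheaf structure is clear and the separation axiom is trivial. For gluing, given tempered $f_i$ on $U_i$ agreeing on overlaps, they patch to a smooth function $f$ on $U$; writing $f=\sum_i\beta_i f$ with the same $\beta_i$, each summand coincides with $\beta_i f_i$, which is tempered on $U_i$ with support contained in $U_i$, hence extends by zero to a tempered function on $U$, and the sum is $f$. For tempered distributions I would define the restriction $\mathcal{S}^*(V)\to\mathcal{S}^*(U)$ as the transpose of $Ext_U^V$, which is continuous. Flabbiness, i.e.\ surjectivity of restriction, is precisely the Hahn--Banach theorem applied to the closed embedding $\mathcal{S}(U)\hookrightarrow\mathcal{S}(V)$ furnished by Theorem~\ref{thm-hardest}: any continuous functional on a closed subspace of a Fr\'echet space extends. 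The remaining sheaf axioms for $\mathcal{S}^*$ follow formally by dualizing the cosheaf axioms for $\mathcal{S}$ already proved.

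The hardest step, I expect, will be the exactness at $\bigoplus_i\mathcal{S}(U_i)$ in the cosheaf sequence for $\mathcal{S}$: producing consistent sections over the pairwise intersections from a cycle is where the \v Cech bookkeeping bites, and one must simultaneously verify that all maps constructed via Theorem~\ref{thm-hardest} are continuous in the Fr\'echet topology rather than merely linear. Once this is in place, the tempered function case reduces to the same partition-of-unity argument, and the distribution case follows formally by duality and Hahn--Banach.
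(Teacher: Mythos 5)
Your treatment of the Schwartz cosheaf and the tempered distribution sheaf tracks the paper's approach in its essentials: surjectivity of $\bigoplus_i\mathcal{S}(U_i)\to\mathcal{S}(U)$ via the tempered partition of unity (Proposition~\ref{prop-part-of-unity}) together with Theorem~\ref{thm-general-char}, exactness at the middle term by a \v Cech-type bookkeeping argument, and the distribution sheaf by formal duality with flabbiness coming from Hahn--Banach (Theorem~\ref{thm-temp-dist-onto-general}). One structural difference worth noting is that the paper's proof of Corollary~\ref{cor-Sch-cosheaf} routes the \v Cech exactness step through a \emph{fixed affine} cover and the affine Lemma~\ref{lem-Sch-cosheaf-affine}, rather than attempting the cycle decomposition directly on the given cover; your ``multiply by $\beta_j$ and re-apply Theorem~\ref{thm-hardest} on $U_i\cap U_j$'' sketch would need the same scaffolding to be made rigorous, since the affine case is where the inductive argument actually lives. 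But the genuine gap is in your tempered-sheaf gluing.

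You claim that $\beta_i f_i$, being tempered on $U_i$ with support contained in $U_i$, ``hence extends by zero to a tempered function on $U$.'' That inference is not justified by anything in the paper, and it is itself a special case of the gluing property you are trying to establish (glue $\beta_i f_i$ on $U_i$ with the zero function on $U\setminus\mathrm{supp}(\beta_i f_i)$). There is no extension-by-zero theorem for tempered functions analogous to Theorem~\ref{thm-hardest}: a tempered function may grow toward the boundary of its domain, and temperedness on $U_i$ is measured through a closed embedding of $U_i$ that has no built-in relation to growth bounds relative to an embedding of $U$; flatness of $\beta_i f_i$ at the support boundary alone does not settle this. The paper sidesteps the issue completely: temperedness on a general QN variety is \emph{defined} chart-by-chart (Definition~\ref{def-temp-func}), and the proof of Corollary~\ref{cor-tempered-sheaf} verifies that the patched function is tempered by restricting to each piece $(U_1\cup U_2)\cap X_j$ of a fixed affine cover and invoking the affine sheaf Lemma~\ref{lem-tempered-sheaf-affine}, which in turn lifts to an ambient Nash manifold where the sheaf property is known from [AG - Proposition~5.1.3]. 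Your ``restriction preserves temperedness by locality of the growth conditions'' has the same, milder, issue: a singular QN variety carries no intrinsic growth condition to localize, so even the presheaf axiom is a chart-wise statement that ultimately rests on [AG]. To repair the gluing step, either prove the extension-by-zero statement for properly-supported tempered functions separately (which again reduces to the affine sheaf lemma) or replace the partition-of-unity patching by the chart-wise verification the paper uses.
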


We would like to emphasize we proved Proposition \ref{prop-elor},
Theorem \ref{thm-hardest}, and Corollary \ref{cor-eimim} also for
non-affine varieties in this category, what we could not do in the
algebraic category.

\subsection*{Structure of this paper}

In \textbf{Section 2} we give preliminary definitions and results
we use in this paper. Most of them concern with Nash manifolds and
Schwartz functions on them, and basic properties of Fréchet spaces.\\
In \textbf{Section 3} we define the new category and show some nice
properties it has.\\
In \textbf{Section 4} we define Schwartz functions, tempered functions
and tempered distributions in this category, and prove some claims
about them.\\
In \textbf{Section 5} we show the co-sheaf structure of Schwartz functions,
and the sheaf structures of tempered functions and distributions.\\
In \textbf{Section 6} we define vector bundles over QN varieties and
show some properties that hold on those bundles.\\
Finally, in \textbf{Appendix A} we build some tools needed for tempered
partition of unity.

\subsection*{Conventions}

Throughout this paper, we use the restricted topology of semi algebraic
sets over $\mathbb{R}^{n}$, unless otherwise stated. For the definition
of restricted topology see \ref{AG-def-rest-top}.

We also use the convention that for two varieties of some kind $X\subset M$,
we will denote by $I_{Sch}^{M}\left(X\right)$ the ideal of Schwartz
functions on $M$ that vanish identically on $X$.

We say a function $f$ is smooth over $M$ if $f\in C^{\infty}\left(M\right)$.

\subsection*{Acknowledgments}

I would like to extend my gratitude to my supervisor Prof. Dmitry
Gourevitch, who taught me how to cross the barriers this work presented.
A great thanks goes to Prof. Avraham Aizenbud, who suggested different
approaches for defining our objects and dealing with the problems
we started with. I would like to thank Prof. William A. Casselman,
for his idea regarding the definition of Schwartz functions on closed
subsets as restrictions of Schwartz functions from their neighborhoods.
I thank Prof. Dmitry Novikov for patiently answering questions that
arose along the way. Finally, I would like to thank Prof. Alessandro
Tancredi for helping me understand the subtleties of the Nash varieties
category.

\section{Preliminaries}

We shall dedicate this section to definitions and results used in
this paper. They will include a definition of a semi-algebraic set,
and the algebraic Alexandrov compactification (2.1-2.3), Fréchet spaces
(2.5-2.7), Nash manifolds and Schwartz functions on Nash manifolds
(2.8-2.10), Schwartz and tempered functions over Nash manifolds (2.11-2.19).
\begin{defn}
{[}BCR{]} A semi-algebraic subset of $\mathbb{R}^{n}$ is a subset
of the form $\bigcup\limits _{i=1}^{n}\bigcap\limits _{j=1}^{m_{i}}\left\{ x\in\mathbb{R}^{n}|p_{i,j}\left(x\right)>0\text{ or }p_{ij}\left(x\right)=0\right\} $
where $p_{ij}\in\mathbb{R}[x_{1},...,x_{n}]$. 
\end{defn}

\begin{rem}
An affine algebraic variety is called \textit{complete} if \uline{any}
regular function on it is bounded (cf. {[}BCR 3.4.9 and 3.4.10{]}).
Thus, a closed embedding of a complete affine algebraic variety is
compact in the Euclidean topology on $\mathbb{R}^{n}$.
\end{rem}

\begin{prop}
\label{prop-Alexandrov}(Algebraic Alexandrov compactication {[}BCR,
Proposition 3.5.3{]}). Let $X$ be an affine algebraic variety that
is not complete, then there exists a pair $\left(\dot{X},i\right)$
such that:
\end{prop}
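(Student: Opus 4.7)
The plan is the classical inverse--stereographic--projection construction made algebraic. Fix a closed embedding $X\hookrightarrow\mathbb{R}^n$, which exists because $X$ is affine, and let $\sigma\colon\mathbb{R}^n\to S^n\setminus\{N\}\subset\mathbb{R}^{n+1}$ denote the inverse stereographic projection from the north pole $N=(0,\dots,0,1)$; its components are rational in $x$ with common denominator $1+|x|^2$. Define $\dot X$ to be the Zariski closure of $\sigma(X)$ inside $S^n$, so that $\dot X$ is naturally an affine algebraic subvariety of $\mathbb{R}^{n+1}$, and set $i:=\sigma|_X\colon X\to\dot X$.

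The main steps are: first, show that $i$ is a biregular embedding of $X$ onto the Zariski--open subset $\sigma(X)\subset\dot X$, by using the inverse of $\sigma$, which is regular on $S^n\setminus\{N\}$ (given by $y\mapsto(1-y_{n+1})^{-1}(y_1,\dots,y_n)$), to translate the defining equations of $X$ into polynomial equations in $y\in\mathbb{R}^{n+1}$ after clearing denominators. Second, verify that $\dot X\setminus i(X)\subseteq\{N\}$, which follows automatically once the first step is established since $S^n\setminus\{N\}$ is Zariski--open in $S^n$. Third, deduce that $\dot X$ is complete from the fact that it is closed in the compact sphere $S^n$, hence compact, together with the standard fact that every regular function on a compact real affine variety is bounded.

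The one place where the hypothesis of non--completeness enters, and thus the main obstacle, is showing $N\in\dot X$, so that exactly one point is added at infinity. Non--completeness of $X$ forces its image under any closed embedding into $\mathbb{R}^n$ to be unbounded; consequently there exists a sequence $x_k\in X$ with $|x_k|\to\infty$, and then $\sigma(x_k)\to N$ in $S^n$. Any polynomial vanishing on $\sigma(X)$ therefore vanishes at $N$ by continuity, so $N$ lies in the Zariski closure $\dot X$. This point--counting verifies the Alexandrov description $\dot X=i(X)\sqcup\{N\}$; any further uniqueness or universal--property clause in the statement then follows from the standard fact that the one--point extension of a locally compact Hausdorff space is unique up to isomorphism, applied in the present setting to the Euclidean topology on $\dot X$.
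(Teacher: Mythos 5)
The paper does not prove this proposition; it is quoted verbatim as a preliminary from [BCR, Proposition 3.5.3]. Your construction via inverse stereographic projection onto the sphere, taking $\dot X$ to be the Zariski closure of $\sigma(X)$ in $S^n$, is precisely the standard argument behind that reference, and all the steps you give are correct: $\sigma$ is biregular $\mathbb{R}^n\to S^n\setminus\{N\}$, $\sigma(X)$ is Zariski-closed in $S^n\setminus\{N\}$ so at most $N$ is added in the closure, compactness of $\dot X\subset S^n$ gives completeness, and unboundedness of the closed embedding of a non-complete $X$ gives a sequence $\sigma(x_k)\to N$ forcing $N\in\dot X$ by continuity of polynomials. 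The only extraneous remark is the closing sentence about uniqueness via the one-point compactification of locally compact Hausdorff spaces, since the statement here asserts only existence.
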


\begin{enumerate}
\item $\dot{X}$ is a complete affine algebraic variety.
\item $i:X\to\dot{X}$ is an algebraic isomorphism from $X$ onto $i\left(X\right)$.
\item $\dot{X}\backslash i\left(X\right)$ consists of a single point.
\end{enumerate}
The chain rule for deriving composite functions can be extended to
higher derivatives and higher dimensions. A relevant result is as
follows:
\begin{lem}
\label{lem-Faa-di}{[}CS, Theorem 2.1{]} Let $x_{0}\in\mathbb{R}^{d}$,
$V\subset\mathbb{R}^{d}$ be some open neighborhood of $x_{0}$ and
$g:V\to\mathbb{R}^{m},\:g\in C^{k}\left(V,\mathbb{R}^{m}\right)$,
for some $k\in\mathbb{N}$. Let $U\subset\mathbb{R}^{n}$ be some
open neighborhood of $g\left(x_{0}\right)$ and $f:U\to\mathbb{R},\:f\in C^{k}\left(U\right)$.
Assume $f$ is $k$-flat at $g(x_{0})$, i.e. its Taylor polynomial
of degree $k$ at $g(x_{0})$ is zero. Then $f\circ g:g^{-1}\left(U\right)\to\mathbb{R}$
is $k$-flat at $x_{0}$
\end{lem}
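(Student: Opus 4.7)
The plan is to reduce the statement to the combinatorial content of Fa\`a di Bruno's formula. Since both $f$ and $g$ are $C^k$, iterating the ordinary chain rule shows that $f\circ g\in C^k(g^{-1}(U)\cap V)$, so the assertion ``$k$-flat at $x_0$'' is well-defined and means $D^\alpha(f\circ g)(x_0)=0$ for every multi-index $\alpha$ with $|\alpha|\le k$. The hypothesis that the degree-$k$ Taylor polynomial of $f$ at $g(x_0)$ vanishes unpacks as $D^\gamma f(g(x_0))=0$ for every multi-index $\gamma$ with $|\gamma|\le k$; in particular $f(g(x_0))=0$, which already settles the case $|\alpha|=0$.

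For $1\le|\alpha|\le k$ I would invoke Fa\`a di Bruno to write
$$D^\alpha(f\circ g)(x_0)=\sum_{\gamma,\pi}c_{\gamma,\pi}\,(D^\gamma f)(g(x_0))\cdot\prod_{B\in\pi}D^{\beta(B)}g_{j(B)}(x_0),$$
where the sum runs over multi-indices $\gamma$ with $1\le|\gamma|\le|\alpha|$ together with combinatorial data $\pi$ describing how the derivatives are distributed among the components of $g$, with combinatorial coefficients $c_{\gamma,\pi}$. The only structural fact I need from this formula is that \emph{every} summand carries a factor $(D^\gamma f)(g(x_0))$ with $1\le|\gamma|\le|\alpha|\le k$. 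By hypothesis each such factor vanishes, so the whole sum is zero.

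If one prefers not to quote Fa\`a di Bruno, the same conclusion follows by induction on $|\alpha|$, starting from $\partial_i(f\circ g)=\sum_j((\partial_jf)\circ g)\cdot\partial_i g_j$ and noting that each subsequent differentiation either raises the order of the derivative of $f$ (keeping it within order $\le k$, hence still killed at $g(x_0)$) or differentiates a factor of $\partial g$ (leaving an intact factor $((D^\gamma f)\circ g)(x_0)=0$). I do not expect a serious obstacle here: the argument is essentially bookkeeping, and the one nontrivial input is the structural observation that in any expansion of $D^\alpha(f\circ g)(x_0)$ every term contains at least one partial derivative of $f$ of total order between $1$ and $|\alpha|\le k$ evaluated at $g(x_0)$, and is therefore annihilated by the $k$-flatness hypothesis.
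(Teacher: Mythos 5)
Your proof is correct, and it matches the approach the paper implicitly relies on: the lemma is stated without proof and attributed to [CS, Theorem 2.1], which is precisely the multivariate Fa\`a di Bruno formula your argument invokes. The key structural observation you isolate — that every term in the expansion of $D^{\alpha}(f\circ g)(x_{0})$ carries a factor $(D^{\gamma}f)(g(x_{0}))$ with $1\le|\gamma|\le|\alpha|\le k$ — is exactly the reason the cited formula yields the flatness conclusion, and your inductive fallback supplies the same bookkeeping without quoting the closed form.
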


\textbf{Fréchet spaces}
\begin{prop}
\label{prop-Closed-Frechet-subspace}{[}T, Chapter 10{]}. A closed
subspace of a Fréchet space is a Fréchet space (in the induced topology).
\end{prop}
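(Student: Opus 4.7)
The plan is to verify each clause in the definition of a Fréchet space for the subspace $Y$, noting that only completeness actually uses the hypothesis that $Y$ is closed. Recall that a Fréchet space is a complete, metrizable, locally convex topological vector space, and that its topology can equivalently be described by a countable family of seminorms $\{p_n\}_{n\in\mathbb{N}}$ together with the associated translation-invariant metric $d(x,y)=\sum_{n\ge 1}2^{-n}\frac{p_n(x-y)}{1+p_n(x-y)}$.

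First I would fix such a family $\{p_n\}$ generating the topology of $X$ and consider the restricted seminorms $p_n|_Y$ on the linear subspace $Y$. It is immediate that the topology on $Y$ induced from $X$ coincides with the locally convex topology generated by the $p_n|_Y$, and with the metric topology of $d|_{Y\times Y}$. Consequently $Y$ is metrizable, locally convex, and a topological vector space; none of this uses closedness, only that $Y$ is a linear subspace.

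The one substantive step is completeness of $(Y,d|_Y)$. Given a Cauchy sequence $(y_k)_{k\ge 1}\subset Y$, it is also Cauchy in $(X,d)$, so by completeness of $X$ it converges to some $x\in X$. Since $Y$ is closed in $X$ and every $y_k$ lies in $Y$, the limit $x$ lies in $Y$ as well, so the sequence converges within $Y$. Combined with the previous paragraph this shows that $Y$, equipped with the induced topology, is a complete metrizable locally convex topological vector space, i.e.\ a Fréchet space.

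I do not anticipate any real obstacle: the argument is a routine unwinding of definitions, and the closedness hypothesis is invoked exactly once, to keep the limit of a Cauchy sequence inside the subspace. The only minor bookkeeping point is checking that the subspace topology really does agree with the metric topology of $d|_Y$, which is standard and presumably the reason the paper simply cites Tréves rather than reproducing the proof.
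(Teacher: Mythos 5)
Your proof is correct and is the standard argument; the paper itself gives no proof but simply cites Tr\`eves, and your argument is exactly what one finds there (and in essentially every functional analysis text): metrizability and local convexity pass trivially to a subspace, while closedness is used precisely once to ensure the limit of a Cauchy sequence stays in the subspace.
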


\begin{thm}
\label{thm-Banach-open-mapping}(Banach open mapping - {[}T, Chapter
17, Corollary 1{]}). A bijective continuous linear map from a Fréchet
space to another Fréchet space is an isomorphism.
\end{thm}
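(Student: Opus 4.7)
The plan is to deduce this from the open mapping property: if $f : E \to F$ is a continuous linear bijection between Fréchet spaces, I will show $f$ is an open map, so that $f^{-1}$ is continuous and $f$ becomes a homeomorphism of topological vector spaces. Together with linearity, this is exactly an isomorphism of Fréchet spaces.

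By translation invariance and linearity, openness reduces to showing that for every neighborhood $V$ of $0$ in $E$, the image $f(V)$ contains a neighborhood of $0$ in $F$. Fix such a $V$, and pick a balanced neighborhood $U$ of $0$ in $E$ with $U - U \subseteq V$. Since $E = \bigcup_{n \in \mathbb{N}} nU$ and $f$ is surjective, $F = \bigcup_{n} f(nU) = \bigcup_{n} n \cdot f(U)$, and a fortiori $F = \bigcup_{n} n \cdot \overline{f(U)}$. Now I would apply the Baire category theorem in $F$, which is available because a Fréchet space is a complete metric space: at least one of the closed sets $n \cdot \overline{f(U)}$ has nonempty interior, hence so does $\overline{f(U)}$. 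Subtracting, and using that $U$ is balanced, yields $\overline{f(U)} - \overline{f(U)} \subseteq \overline{f(U - U)} \subseteq \overline{f(V)}$, which therefore contains a neighborhood of $0$ in $F$.

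The main obstacle will be removing the closure in $\overline{f(V)}$. The standard device is a successive approximation scheme: work with a countable basis $\{V_k\}$ of balanced neighborhoods of $0$ in $E$ with $V_{k+1}+V_{k+1} \subseteq V_k$ and $V_0 \subseteq V$, and for each $k$ produce by the previous paragraph a balanced neighborhood $W_k$ of $0$ in $F$ with $W_k \subseteq \overline{f(V_k)}$, shrinking $W_k \to 0$ along a translation-invariant metric on $F$. Given $y \in W_0$, I would iteratively choose $x_k \in V_k$ with $y - f(x_0 + \cdots + x_k) \in W_{k+1}$; the telescoping condition on $V_{k+1}+V_{k+1}\subseteq V_k$ makes the partial sums Cauchy in $E$, and completeness of $E$ gives a limit $x \in V$. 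Continuity of $f$ then forces $f(x) = y$, so $W_0 \subseteq f(V)$, completing the proof that $f$ is open.

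Finally, openness of the bijection $f$ is the same as continuity of $f^{-1}$; combined with the given continuity of $f$ this makes $f$ a bicontinuous linear bijection, i.e.\ an isomorphism of Fréchet spaces, as required. I expect the bookkeeping in the approximation step to be the only nontrivial point; everything else is a straightforward application of Baire and the balanced/absorbing structure of neighborhoods of $0$.
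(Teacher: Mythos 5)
The paper does not prove this statement; it cites it directly from Treves [T, Chapter 17, Corollary 1] as a standard fact. Your proposal is a correct rendition of the standard textbook proof via Baire category and successive approximation, which is essentially the argument in the cited reference. One small bookkeeping caveat, which you yourself anticipate: with the chain $V_{k+1}+V_{k+1}\subseteq V_k$ one gets $x_1+\cdots+x_n\in V_0$ for all $n$, hence the limit $x=x_0+\sum_{k\ge 1}x_k$ lies in $\overline{V_0+V_0}$ rather than obviously in $V$; to conclude $x\in V$ you should choose $V_0$ small enough that $\overline{V_0+V_0}\subseteq V$ (possible since a Fr\'echet space is metrizable, hence regular). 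With that adjustment the argument is complete.
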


$\:$
\begin{thm}
\label{thm-Hahn-Banach}(Hahn-Banach - {[}T, Chapter 18{]}). Let $F$
be a Fréchet space, and $K\subset F$ a closed subspace. By Proposition
\ref{prop-Closed-Frechet-subspace} $K$ is a Fréchet space (with
the induced topology). Define $F^{*}$ (respectively $K^{*}$) to
be the space of continuous linear functionals on $F$ (on $K$). Then
the restriction map $F^{*}\to K^{*}$ is onto.
\end{thm}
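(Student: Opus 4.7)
The plan is to reduce the statement to the classical (real/complex) Hahn--Banach theorem in its dominated extension form, using only the fact that continuity on a Fr\'echet space is detected by a single continuous seminorm at a time.

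First I would pick an arbitrary $\ell\in K^{*}$ and describe its continuity in seminorm language. Choose a countable family of seminorms $\{p_{n}\}_{n\in\mathbb{N}}$ defining the topology of $F$, arranged so that $p_{n}\le p_{n+1}$. By the definition of the induced topology on $K$, the restricted family $\{p_{n}|_{K}\}$ defines the topology of $K$. Hence continuity of $\ell$ at the origin of $K$ yields an index $n_{0}$ and a constant $C>0$ such that
\[
|\ell(x)|\le C\,p_{n_{0}}(x)\qquad\text{for every }x\in K.
\]

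Next I would apply the classical analytic Hahn--Banach theorem. The functional $q:F\to\mathbb{R}$ defined by $q(x):=C\,p_{n_{0}}(x)$ is a continuous seminorm on $F$, in particular a sublinear (and in the complex case, a semi-norm, hence absolutely homogeneous) functional dominating $\ell$ on $K$. The classical theorem therefore produces a linear extension $\tilde\ell:F\to\mathbb{R}$ (or $\mathbb{C}$) with $\tilde\ell|_{K}=\ell$ and $|\tilde\ell(x)|\le q(x)=C\,p_{n_{0}}(x)$ for every $x\in F$.

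Finally, I would read off continuity of $\tilde\ell$ from the last inequality: being bounded in absolute value by a continuous seminorm on $F$, the functional $\tilde\ell$ is continuous, i.e.\ $\tilde\ell\in F^{*}$, and by construction it restricts to $\ell$ on $K$. That exhibits $\ell$ as the image of $\tilde\ell$ under the restriction map, proving surjectivity.

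The only subtle point, and the place where the Fr\'echet (rather than Banach) setting is felt, is the passage from continuity of $\ell$ to a \emph{single} dominating seminorm; this is the reason one has to pick one seminorm $p_{n_{0}}$ out of the defining sequence. Once that step is in place, the argument is purely the standard dominated extension theorem and requires no further properties of $F$ or $K$ beyond the local convexity built into the seminorm description.
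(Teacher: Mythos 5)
Your argument is correct and is essentially the standard textbook proof that the paper cites (Treves, Chapter 18) without reproducing: reduce continuity of $\ell$ to domination by a single continuous seminorm $Cp_{n_0}$, apply the classical dominated-extension form of Hahn--Banach, and observe that the extension is dominated by the same seminorm on all of $F$, hence continuous. The handling of the one genuinely Fr\'echet-specific step — extracting a single dominating seminorm from the directed defining family — is done correctly.
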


$\:$

\textbf{Nash manifolds}
\begin{defn}
\label{AG-def-rest-top}A restricted topological space $M$ is a set
$M$ equipped with a family of subsets of $M$, including $M$ and
the empty set, called the set of open subsets of $M$, that is closed
with respect to \textbf{finite} unions and finite intersections.

Therefore, we will consider only finite open covers in restricted
topology.
\end{defn}

$\:$
\begin{defn}
An \textbf{$\mathbb{R}$-space} is a pair $\left(M,\mathcal{O}_{M}\right)$
where $M$ is a restricted topological space and $\mathcal{O}_{M}$
a sheaf of $\mathbb{R}$-algebras over $M$ which is a subsheaf of
the sheaf $C_{M}$ of all continuous real-valued functions on $M$.

A continuous map $\varphi:\left(M,\mathcal{O}_{M}\right)\to\left(N,\mathcal{O}_{N}\right)$
is called a morphism of $\mathbb{R}$-spaces if for any open subset
$U\subset N$ and any $f\in\mathcal{O}_{N}\left(U\right)$, we have
$f\circ\varphi\left(\varphi^{-1}\left(U\right)\right)\in\mathcal{O}_{M}\left(\varphi^{-1}\left(U\right)\right)$.
\end{defn}

$\:$
\begin{defn}
\label{AG-def-Nash-manif}(1) A Nash submanifold $M$ of $\mathbb{R}^{n}$
is a semi-algebraic subset of $\mathbb{R}^{n}$ which is a smooth
submanifold. A Nash function on $M$ is a smooth semi-algebraic function.

(2) An affine Nash manifold is an $\mathbb{R}$-space which is isomorphic
to an $\mathbb{R}$-space associated to a closed Nash submanifold
of $\mathbb{R}^{n}$.

(3) A Nash manifold is an $\mathbb{R}$-space $\left(M,\mathcal{N}_{M}\right)$
with a sheaf of Nash functions, which has a finite open cover $\left(M_{i}\right)_{i=1}^{n}$
such that each $\mathbb{R}$-space $\left(M_{i},\mathcal{N}_{M}|_{M_{i}}\right)$
is an affine Nash manifold.
\end{defn}

$\:$

\textbf{Schwartz and tempered functions on Nash manifolds}
\begin{prop}
\label{AG-prop-sub-of-Nash-is-Nash}{[}AG - Proposition 3.3.3{]} (1)
Any open (semi-algebraic) subset $U$ of an affine Nash manifold $M$
with the induced $\mathbb{R}$-space structure is an affine Nash manifold. 

(2) Any open (semi-algebraic) subset $U$ of a Nash manifold $M$
with the induced $\mathbb{R}$-space structure is a Nash manifold.
\end{prop}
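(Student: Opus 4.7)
The approach is to prove (1) by the standard ``graph trick'' embedding of $U$ as a closed Nash submanifold of a higher-dimensional affine space, and then to deduce (2) by applying (1) to each chart of the given affine cover of $M$.

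For (1), I would first realize $M$ as a closed Nash submanifold of some $\mathbb{R}^{n}$, via the isomorphism guaranteed by the definition of an affine Nash manifold. The complement $Z := M \setminus U$ is then a closed semi-algebraic subset of $M$. The crucial step is to produce a Nash function $f : M \to \mathbb{R}$ whose zero set equals $Z$; this is a classical fact in real algebraic geometry (assemble the defining polynomial inequalities of $Z$, using sums of squares and products, following [BCR]). Given such an $f$, consider the map
\[
\Phi : U \longrightarrow M \times \mathbb{R}, \qquad \Phi(x) = \bigl(x,\, 1/f(x)\bigr),
\]
which is a well-defined Nash map since $f$ is nowhere zero on $U$. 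Its image is
\[
\Gamma = \{(x,t) \in M \times \mathbb{R} \mid t\, f(x) = 1\},
\]
which is closed in $M \times \mathbb{R}$, and therefore closed in $\mathbb{R}^{n+1}$. The inverse of $\Phi$ is the projection onto the first coordinate, which is patently Nash. Hence $\Phi$ is an isomorphism of $\mathbb{R}$-spaces identifying $U$ with a closed Nash submanifold of $\mathbb{R}^{n+1}$, proving that $U$ is an affine Nash manifold.

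For (2), take the finite affine cover $(M_{i})_{i=1}^{n}$ of $M$ supplied by Definition \ref{AG-def-Nash-manif}(3). The sets $U_{i} := U \cap M_{i}$ form a finite open cover of $U$, and each $U_{i}$ is an open semi-algebraic subset of the affine Nash manifold $M_{i}$. By part (1), each $U_{i}$ is itself an affine Nash manifold. Therefore $U$, equipped with the induced $\mathbb{R}$-space structure, is a Nash manifold.

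The only nontrivial ingredient is the existence of the Nash function $f$ with zero set exactly $Z$; this is the principal obstacle, but it is handled by a standard real-algebraic construction. Once this function is available, verifying that $\Phi$ and its inverse are Nash morphisms is routine, and the passage from (1) to (2) via the finite affine cover is immediate.
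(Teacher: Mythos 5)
The paper does not prove this proposition; it cites it verbatim as [AG, Proposition 3.3.3]. Your proof reproduces the argument that [AG] itself uses: isolate, as a lemma, the existence of a Nash function $f$ on $M$ whose zero set is exactly $M\setminus U$ (this is [AG, Lemma 3.3.2], which in turn rests on [BCR, Proposition 2.6.2]), and then embed $U$ as the closed graph $\{(x,t): t f(x)=1\}\subset M\times\mathbb{R}\subset\mathbb{R}^{n+1}$. So your approach is essentially identical to the cited proof, and it is correct.

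One small caveat on your parenthetical remark about the key lemma: obtaining a \emph{continuous} semi-algebraic function vanishing exactly on a closed semi-algebraic set is easy (take the distance function), but obtaining a \emph{Nash} (smooth semi-algebraic) one is genuinely more delicate than ``assemble the defining inequalities using sums of squares and products'' suggests; the actual construction in [BCR, Proposition 2.6.2] requires a careful smoothing step. The fact itself is true and your citation is the right one, so this does not affect the validity of the proof, but the hand-wave understates where the real work lies. Everything else --- the closedness of $\Gamma$ in $\mathbb{R}^{n+1}$, the Nash-ness of $\Phi$ and its inverse, and the reduction of (2) to (1) via the finite affine cover --- is correct and routine.
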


\begin{defn}
(1) \textbf{Nash differential operator} on an affine Nash manifold
is an element of the algebra with $1$ generated by multiplication
by Nash functions and derivations along Nash sections of the tangent
bundle (Nash vector fields).

(2) The space of \textbf{Schwartz functions on an affine Nash manifold
$M$} is $\mathcal{S}\left(M\right):=\left\{ \phi\in C^{\infty}\left(M\right)|D\phi\text{ is bounded for any Nash differential operator}\right\} $.

(3) The topology on $\mathcal{S}\left(M\right)$ is defined by the
semi norms $||\phi||{}_{D}:=\sup\limits _{x\in M}|D\phi\left(x\right)|$.

(4) Let $M$ be as in \ref{AG-def-Nash-manif}(3), and $\phi:\bigoplus\limits _{i=1}^{k}\mathcal{S}\left(M_{i}\right)\to C^{\infty}\left(M\right)$
defined by extension by zero and summing. Then $\mathcal{S}\left(M\right):=Im\left(\phi\right)$.
\end{defn}

$\:$
\begin{cor}
\label{AG-prop-Sch-on-Nash-is-Fre}{[}corollary of AG - Corollary
4.1.2{]} Let $M$ be a Nash manifold. Then $\mathcal{S}\left(M\right)$
is a Fréchet space.
\end{cor}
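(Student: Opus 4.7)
The plan is to reduce to the affine-Nash case supplied by [AG, Corollary 4.1.2] and then assemble via a finite affine cover. By Definition \ref{AG-def-Nash-manif}(3), $M$ admits a finite open cover $\{M_i\}_{i=1}^n$ by affine Nash manifolds. For each $M_i$, [AG, Corollary 4.1.2] says that $\mathcal{S}(M_i)$ is a Fr\'echet space: its topology is generated by the seminorms $\|\cdot\|_D$ as $D$ ranges over a countable set of Nash differential operators (the algebra of such operators is finitely generated, hence countable as a set), and completeness is routine because uniform Cauchy convergence of all Nash derivatives produces a smooth limit with the same uniform bounds.

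Next, by Definition 2.11(4), $\mathcal{S}(M)$ is the image of the linear map $\phi\colon E := \bigoplus_{i=1}^{n}\mathcal{S}(M_i) \to C^{\infty}(M)$ given by extension by zero and summation. A finite direct sum of Fr\'echet spaces is Fr\'echet, so $E$ is Fr\'echet. I would endow $\mathcal{S}(M) = Im(\phi)$ with the quotient topology of $E/\ker\phi$; since the quotient of a Fr\'echet space by a closed subspace is again Fr\'echet, it remains to verify that $\ker\phi$ is closed in $E$.

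To achieve this, equip $C^{\infty}(M)$ with a natural Fr\'echet topology, for instance uniform convergence of all Nash derivatives on the closed semi-algebraic pieces of each $M_i$, and show that $\phi$ is continuous. The key point is that any $\psi\in\mathcal{S}(M_i)$ extends smoothly by zero across $M\setminus M_i$: the Schwartz condition forces $\psi$ and all its Nash derivatives to vanish at the boundary of $M_i$ inside $M$, so the extension is $C^{\infty}$, and the corresponding extension-by-zero map is bounded by the Schwartz seminorms. Hence each extension-by-zero is continuous, $\phi$ is continuous, and $\ker\phi$ is closed.

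The main obstacle is choosing the topology on $C^{\infty}(M)$ finely enough for $\phi$ to be continuous while coarsely enough for the flatness of Schwartz functions at affine-piece boundaries to yield genuine smoothness of the glued extension. It is precisely at this step that the decay/flatness property built into the definition of Schwartz functions on affine Nash manifolds does the essential work; once it is in place, the quotient-of-Fr\'echet argument from the previous paragraphs closes the proof.
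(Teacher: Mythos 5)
The paper does not prove this statement; it is cited as a consequence of [AG, Corollary 4.1.2] and used as a black box. Your proposal reconstructs the [AG] argument in broadly the right shape — pass to a finite affine cover, invoke the affine case, realize $\mathcal{S}(M)$ as $\bigl(\bigoplus_i \mathcal{S}(M_i)\bigr)/\ker\phi$, and reduce everything to showing $\ker\phi$ is closed. That is indeed the route the present paper itself follows for QN varieties in Lemma \ref{lem-Schwartz-Gen-Frech}. Two points, one minor error and one real gap.

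First, the parenthetical claim that the algebra of Nash differential operators on an affine Nash manifold is ``finitely generated, hence countable as a set'' is false: multiplication by Nash functions is already an uncountable family. Countability of a cofinal family of seminorms is a genuine technical point of [AG, Proposition 4.1.1] and should simply be cited, not asserted via a wrong finiteness claim.

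Second, and more importantly, your strategy for closedness of $\ker\phi$ — topologize $C^{\infty}(M)$ so that $\phi$ becomes continuous — is both heavier than necessary and not actually completed; you end by describing an ``obstacle'' without resolving it. No topology on $C^{\infty}(M)$ is needed. The kernel is cut out by the pointwise conditions $\sum_{i\in J_x}\psi_i(x)=0$ for each $x\in M$ (with $J_x=\{i : x\in M_i\}$), and each such condition is closed because point evaluation $\psi_i\mapsto\psi_i(x)$ is a continuous functional on $\mathcal{S}(M_i)$ (it is bounded by the Schwartz sup-seminorm). Hence $\ker\phi$ is an intersection of closed hyperplanes and is closed; the quotient is Fréchet by Proposition \ref{prop-Closed-Frechet-subspace} and [T, Prop.~7.9]. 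This is exactly the argument the paper records in the proof of Lemma \ref{lem-Schwartz-Gen-Frech}, and it sidesteps the smoothness-of-extension discussion entirely, which in any case belongs to showing $\phi$ is well defined, not to closedness of its kernel.
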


\begin{defn}
{[}AG - Definition 4.2.1 and Theorem 4.6.2{]} A function $t:\mathbb{R}^{n}\to\mathbb{R}$
is called \textit{tempered} if it is a smooth function such that for
any $\alpha\in\left(\mathbb{N}\cup\left\{ 0\right\} \right)^{n}$
there exists a polynomial $p_{\alpha}\in\mathbb{R}\left[x_{1},...,x_{n}\right]$
such that $|\frac{\partial^{|\alpha|}t}{\partial^{\alpha}x}\left(x\right)|<p_{\alpha}\left(x\right)$
for any $x\in\mathbb{R}^{n}$. Let $M$ be an affine Nash manifold,
and let $i:M\hookrightarrow\mathbb{R}^{n}$ be a closed embedding.
A function $t:M\to\mathbb{R}$ is called a \textit{tempered function}
on $M$ if $i_{*}f:=f\circ i^{-1}$ is the restriction to $i(M)$
of a tempered function from $\mathbb{R}^{n}$. Denote the space of
all tempered functions on $M$ by $\mathcal{T}\left(M\right)$. $\mathcal{T}\left(M\right)$
is a well defined space (independent of the embedding chosen).
\end{defn}

\begin{prop}
\label{AG-prop-ts-is-s}{[}corollary of AG - Proposition 4.2.1{]}
Let $M$ be a Nash manifold and $\alpha$ be a tempered function on
$M$. Then $\alpha\mathcal{S}\left(M\right)\subset\mathcal{S}\left(M\right)$.
\end{prop}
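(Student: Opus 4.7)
My plan is to reduce the statement to the classical fact that on $\mathbb{R}^{n}$ the product of a tempered function and a Schwartz function is again Schwartz, and then to transport this across the definitions of tempered and Schwartz functions reviewed above.

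First I would reduce to the affine case. By Definition \ref{AG-def-Nash-manif}(3), $M$ admits a finite affine open cover $(M_{i})_{i=1}^{k}$, and by the definition of $\mathcal{S}(M)$ any $\phi\in\mathcal{S}(M)$ decomposes as a finite sum $\phi=\sum_{i=1}^{k}\widetilde{\phi}_{i}$ where each $\widetilde{\phi}_{i}$ is the extension by zero of some $\phi_{i}\in\mathcal{S}(M_{i})$. Since $\widetilde{\phi}_{i}$ vanishes off $M_{i}$, one has $\alpha\widetilde{\phi}_{i}=\widetilde{(\alpha|_{M_{i}})\phi_{i}}$, the extension by zero of $(\alpha|_{M_{i}})\phi_{i}$. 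Restriction of a tempered function to an open semi-algebraic subset is again tempered (this is a standard fact in the Aizenbud--Gourevitch framework), so $\alpha|_{M_{i}}$ is tempered on $M_{i}$. Thus, once the affine case is proved, we will have $(\alpha|_{M_{i}})\phi_{i}\in\mathcal{S}(M_{i})$, hence $\alpha\widetilde{\phi}_{i}\in\mathcal{S}(M)$, and therefore $\alpha\phi\in\mathcal{S}(M)$.

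For the affine case, I would fix a closed embedding $i\colon M\hookrightarrow\mathbb{R}^{n}$. By the definition of tempered functions, $\alpha=\widetilde{\alpha}|_{M}$ for some $\widetilde{\alpha}\in\mathcal{T}(\mathbb{R}^{n})$, and by the Aizenbud--Gourevitch result that the restriction map $\mathcal{S}(\mathbb{R}^{n})\to\mathcal{S}(M)$ is surjective there exists $\widetilde{\phi}\in\mathcal{S}(\mathbb{R}^{n})$ with $\widetilde{\phi}|_{M}=\phi$. Leibniz's rule gives
\[
\partial^{\gamma}(\widetilde{\alpha}\widetilde{\phi})=\sum_{\delta\le\gamma}\binom{\gamma}{\delta}\,\partial^{\delta}\widetilde{\alpha}\cdot\partial^{\gamma-\delta}\widetilde{\phi},
\]
and since every $\partial^{\delta}\widetilde{\alpha}$ is dominated by some polynomial while every $\partial^{\gamma-\delta}\widetilde{\phi}$ decays faster than the reciprocal of any polynomial, each $x^{\beta}\partial^{\gamma}(\widetilde{\alpha}\widetilde{\phi})$ is uniformly bounded on $\mathbb{R}^{n}$. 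Hence $\widetilde{\alpha}\widetilde{\phi}\in\mathcal{S}(\mathbb{R}^{n})$, and restricting to $M$ yields $\alpha\phi=(\widetilde{\alpha}\widetilde{\phi})|_{M}\in\mathcal{S}(M)$.

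I do not anticipate a serious obstacle here: the nontrivial inputs (extendability of tempered functions to $\mathbb{R}^{n}$, surjectivity of the Schwartz restriction map to a closed Nash submanifold, and the compatibility of the cover-based definition of $\mathcal{S}(M)$ with extension by zero) are all supplied by the Aizenbud--Gourevitch framework. The only point that deserves a line of explicit verification is that $\alpha|_{M_{i}}$ is tempered on $M_{i}$ with respect to its own (a priori different) affine embedding, which is the standard restriction property of $\mathcal{T}$ cited above and is independent of the main Schwartz-function calculation.
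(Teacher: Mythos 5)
The paper does not prove this statement; it is simply cited as a corollary of Aizenbud--Gourevitch's Proposition~4.2.1, so there is no in-paper argument to compare against. Your reconstruction is correct: the reduction to the affine case via the cover-based definition of $\mathcal{S}(M)$, the use of the surjectivity of $\mathcal{S}(\mathbb{R}^n)\to\mathcal{S}(M)$ for a closed Nash submanifold (Theorem~\ref{AG-thm-red-to-cl-is-onto}), the extendability built into the definition of $\mathcal{T}(M)$, and the Leibniz estimate on $\mathbb{R}^n$ together give a complete proof, and this is precisely the kind of argument the cited Aizenbud--Gourevitch result rests on.
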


\begin{thm}
\label{AG-thm-red-to-cl-is-onto}{[}corollary of AG - Theorem 4.6.1{]}
Let $M$ be a Nash manifold and $Z\hookrightarrow M$ be a closed
Nash submanifold. The restriction $\mathcal{S}\left(M\right)\to\mathcal{S}\left(Z\right)$
is defined, continuous and onto. 
\end{thm}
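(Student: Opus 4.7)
The proof plan splits into two parts: well-definedness together with continuity of the restriction $\mathcal{S}(M) \to \mathcal{S}(Z)$, and its surjectivity. My overall strategy is to reduce each part to a statement in an affine Nash chart of $M$ on which $Z$ appears in ``standard position'', and then to globalise by a Nash partition of unity subordinate to the finite affine cover guaranteed by Definition \ref{AG-def-Nash-manif}(3).

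For the first part, after reducing to the affine case, I would use the Nash tubular-neighborhood theorem to cover a Nash neighborhood of $Z$ in $M$ by Nash coordinate charts in which $Z$ is given by $\{y_1 = \cdots = y_k = 0\}$ in product coordinates $(x,y)$. In any such chart every Nash function (resp.\ Nash vector field) on $Z$ is the restriction to $Z$ of a Nash function (resp.\ Nash vector field) on $M$, obtained by pull-back along the projection $(x,y) \mapsto (x,0)$. Consequently every Nash differential operator $D$ on $Z$ lifts to a Nash differential operator $\widetilde{D}$ on $M$ with $(\widetilde{D}\phi)|_Z = D(\phi|_Z)$, and one obtains the chart-wise estimate $\|\phi|_Z\|_D \leq \|\phi\|_{\widetilde{D}}$. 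Summing these estimates across the finite Nash atlas delivers both the well-definedness of the restriction map $\mathcal{S}(M) \to \mathcal{S}(Z)$ and its continuity in the seminorm topologies of Definition 2.11(3).

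For surjectivity, given $\psi \in \mathcal{S}(Z)$, my plan is as follows. First, invoke the Nash tubular-neighborhood theorem to obtain an open Nash neighborhood $U$ of $Z$ in $M$ together with a Nash retraction $r: U \to Z$; the composition $\psi \circ r$ is then smooth on $U$ and restricts to $\psi$ on $Z$. Second, produce a tempered cutoff $\rho$ on $M$ supported in $U$ and identically $1$ on a smaller Nash neighborhood of $Z$, by applying a tempered partition of unity subordinate to the cover $\{U, M \setminus Z\}$. Third, set $\phi$ to be the extension by zero from $U$ to $M$ of $\rho \cdot (\psi \circ r)$; by construction $\phi|_Z = \psi$, and it remains to check that $\phi \in \mathcal{S}(M)$.

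The hardest step will be this last verification. Unwinding by the Leibniz and chain rules, any Nash differential operator applied to $\rho \cdot (\psi \circ r)$ becomes a finite sum of products of Nash (hence tempered) derivatives of $r$ and of $\rho$ with derivatives of $\psi$ evaluated at $r(x)$; the Schwartz bounds on $\psi$ together with Proposition \ref{AG-prop-ts-is-s} will then force the product into $\mathcal{S}(U)$, and the support condition on $\rho$ lets one extend by zero to $\mathcal{S}(M)$. Two technical subtleties dominate: the availability of a tempered cutoff with the prescribed support and normalisation, which is itself a nontrivial input on general Nash manifolds; and the careful bookkeeping needed to turn the tempered chain-rule expansion into genuine Schwartz estimates on $M$, rather than merely smooth ones. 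Once these are in place, patching via the finite affine Nash cover of $M$ and its subordinate Nash partition of unity completes the proof.
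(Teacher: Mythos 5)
The paper does not prove this theorem itself: it is stated as a corollary of [AG, Theorem 4.6.1] and used as a black box, so there is no in-paper proof to compare against. Judged on its own, your proposal for well-definedness and continuity is a reasonable sketch (modulo the usual care needed when lifting a Nash differential operator on $Z$ to one on $M$, since the projection-pullback lift lives only on the tubular neighborhood and must be cut off to globalize, which mildly changes the bookkeeping). But the surjectivity argument has a genuine gap.

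The gap is visible already in the flat model $M=\mathbb{R}^{n}$, $Z=\mathbb{R}^{k}\times\{0\}$. There the Nash tubular neighborhood may be taken to be $U=M$ itself, with retraction $r(x,y)=x$, and the tempered partition of unity subordinate to $\{U,M\setminus Z\}=\{M,M\setminus Z\}$ can legitimately return $\rho\equiv 1$ (this choice satisfies all the requirements of Theorem \ref{AG-thm-part-of-unity}, including condition (2)). Your lift is then $\phi(x,y)=\psi(x)$, which is constant in the normal variable $y$ and hence not Schwartz on $\mathbb{R}^{n}$. More generally, $\psi\circ r$ is constant along the fibers of $r$, so it cannot decay in the normal direction, and multiplying by a tempered $\rho$ does not create that decay; Proposition \ref{AG-prop-ts-is-s} requires one of the two factors to already be Schwartz, and neither $\rho$ nor $\psi\circ r$ is. The missing ingredient is a cutoff in the normal fiber variable: in the flat model one takes $\phi(x,y)=\psi(x)\,g(y)$ with $g$ a fixed compactly supported bump with $g(0)=1$, and in the general case this fiberwise bump must be built inside the tubular-neighborhood trivializations and patched globally. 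That construction (not the tempered partition of unity subordinate to $\{U,M\setminus Z\}$) is the real content of the surjectivity proof in AG, and without it the claim $\phi\in\mathcal{S}(M)$ in your third step is false.
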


\begin{prop}
\label{AG-prop-aff-temp-sheaf}{[}AG - Proposition 5.1.3{]} Let $M$
be a Nash manifold. The assignment of the space of tempered functions
on $U$, to any open $U\subset M$, together with the usual restriction
maps, define a sheaf of algebras on $M$.
\end{prop}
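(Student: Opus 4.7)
My plan is to verify the three ingredients separately: the presheaf structure, the algebra structure, and the sheaf (locality + gluing) axiom over finite open covers, the last being the serious one.

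First, I would check that $U\mapsto\mathcal{T}(U)$ is a presheaf of $\mathbb{R}$-algebras. For the algebra part, pointwise sums and products of tempered functions are tempered: if the derivatives of $t,s$ along multi-indices are bounded by polynomials after a closed embedding $i\colon M\hookrightarrow\mathbb{R}^n$, then by linearity the same holds for $t+s$, and by the Leibniz rule each derivative of $ts$ is a finite sum of products of polynomially-bounded quantities, hence polynomially bounded. For the restriction maps, given an open $V\subset U\subset M$ and $t\in\mathcal{T}(U)$, I would use Proposition \ref{AG-prop-sub-of-Nash-is-Nash} to know that $V$ is again a Nash manifold; the content is then to check that the restriction is tempered with respect to some embedding of $V$, which can be done by combining a closed embedding $V\hookrightarrow\mathbb{R}^m$ with the fact that the transition maps between different embeddings of an affine Nash manifold are Nash, so they preserve polynomial growth of derivatives.

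Second, locality is immediate and set-theoretic: if $t\in\mathcal{T}(U)$ vanishes on each member of a finite cover $\{U_i\}$ of $U$, then $t\equiv0$, so the separation axiom holds.

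Third, and this is where the real work sits, I need the gluing axiom. Given $t_i\in\mathcal{T}(U_i)$ on a finite open cover of $U$ that agree on overlaps, the pointwise function $t\in C^\infty(U)$ is well defined; the issue is that being tempered is an a priori global growth condition on the manifold, not obviously detectable locally. The approach I would use is a \emph{tempered partition of unity}: produce tempered functions $\beta_i$ on $U$ with $\mathrm{supp}(\beta_i)\subset U_i$ and $\sum_i\beta_i=1$, then set $t=\sum_i\beta_i t_i$, interpreting each $\beta_i t_i$ as a tempered function on $U$ extended by zero outside $U_i$ (using that $\beta_i$ kills the potential blow-up of $t_i$ near $\partial U_i$, combined with Proposition \ref{AG-prop-ts-is-s}-style arguments that multiplying a tempered function by a suitable tempered function keeps one tempered after extension). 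The main obstacle is therefore constructing such a tempered partition of unity on an arbitrary Nash manifold subordinate to an arbitrary finite open cover. I would expect to build the $\beta_i$ semi-algebraically using Łojasiewicz-type inequalities controlling the distance to the boundary of each $U_i$, composed with a fixed tempered ``cut-off profile'' on $\mathbb{R}$, and then passing from a suitable refinement to the given cover via a standard convex combination trick. This is precisely the sort of construction the paper under review revisits for its Proposition \ref{prop-elor} in the QN setting.
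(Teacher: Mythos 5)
This proposition is cited from [AG, Proposition 5.1.3]; the paper you are reading does not reprove it, so the honest comparison is to [AG] itself and to the paper's QN analogues (Lemma \ref{lem-tempered-sheaf-affine}, Corollary \ref{cor-tempered-sheaf}). Your three-step decomposition — algebra-presheaf structure via Leibniz and embedding-independence, trivial separation, and gluing via a tempered partition of unity — is exactly the route [AG] takes and the one this paper echoes in the QN setting, and your identification of the partition of unity as the genuine obstacle is correct.

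Two points are worth pinning down before this would read as a complete proof. (i) Ordering: in [AG] the tempered partition of unity on a general Nash manifold (Theorem \ref{AG-thm-part-of-unity}) is stated \emph{after} the sheaf Proposition 5.1.3, so you cannot invoke the full non-affine p.o.u. here without risking circularity. The clean route is to prove the sheaf axiom first on \emph{affine} Nash manifolds using the affine partition of unity (which is what your Łojasiewicz/cut-off/refinement construction produces), and then pass to a general $M$ by restricting to any affine chart, since a tempered function on $M$ is by definition one that restricts to a tempered function on each affine chart. (ii) The mechanism behind "$\beta_i$ kills the blow-up" is not that $\beta_i$ must be specially engineered to decay super-polynomially: because $\mathrm{supp}(\beta_i)$ is closed in $U$ and contained in the open $U_i$, $\beta_i$ in fact vanishes identically on a neighborhood of $\partial U_i\cap U$, so $\beta_i t_i$ extends smoothly by zero, and its temperedness on $U$ then reduces to a polynomial (Łojasiewicz-type) comparison on $\mathrm{supp}(\beta_i)$ between the closed embedding of $U_i$ and that of $U$. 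This is the step your appeal to "Proposition \ref{AG-prop-ts-is-s}-style arguments" is gesturing at, but that proposition concerns tempered$\times$Schwartz$\subset$Schwartz; the lemma you actually need (tempered, supported in $U_i$, times tempered on $U_i$, extends by zero to tempered on $U$) is a separate statement that should be isolated and proved via the support gap and the majoration lemma.
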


\begin{thm}
\label{AG-thm-part-of-unity}{[}AG - Theorem 5.2.1{]} (Partition of
unity for Nash manifold). Let $M$ be a Nash manifold, and let $\left(U_{i}\right)_{i=1}^{n}$
be a finite open cover. Then 

(1) there exist tempered functions $\alpha_{1},...,\alpha_{n}$ on
$M$ such that $supp\left(\alpha_{i}\right)\subset U_{i}$, $\sum\limits _{i=1}^{n}\alpha_{i}=1$. 

(2) Moreover, we can choose $i$ in such a way that for any $\phi\in\mathcal{S}\left(M\right),\:\alpha_{i}\phi\in\mathcal{S}\left(U_{i}\right)$.
\end{thm}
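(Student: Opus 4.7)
The plan is to reduce to the affine Nash case using the defining finite affine cover of $M$, and then to construct the partition from tempered bump functions adapted to zero sets of Nash functions. By Definition \ref{AG-def-Nash-manif}(3), $M$ admits a finite affine Nash cover $\{M_j\}_{j=1}^{k}$, so the refinement $\{U_i \cap M_j\}$ of $\{U_i\}$ consists of open semi-algebraic subsets of affine Nash manifolds. A partition of unity subordinate to this refinement can be coarsened to one subordinate to $\{U_i\}$ by summing terms over $j$, and since sums of tempered functions are tempered and the property ``$\phi \mapsto \alpha\phi$ sends $\mathcal{S}(M)$ to $\mathcal{S}(U_i)$'' is preserved by such sums, it suffices to handle the case in which $M$ is an affine Nash manifold, embedded as a closed Nash submanifold of some $\mathbb{R}^{N}$.

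The technical heart of the argument is the following construction: for any open semi-algebraic $U \subset M$ with closed complement $Z := M \setminus U$, and any closed semi-algebraic $K \subset U$, produce a nonnegative tempered function $\beta$ on $M$ that is strictly positive on $K$, supported in $\overline{U}$, and flat on $Z$. To do this, I would take a Nash function $g$ on $\mathbb{R}^{N}$ whose zero set on $M$ is exactly $Z$ (such a $g$ exists since closed semi-algebraic sets in an affine Nash manifold are zero loci of Nash functions), and set $\beta(x) := \exp\!\bigl(-1/g(x)^{2}\bigr)$ on $U$, extended by $0$ to $Z$. Standard arguments show $\beta$ is smooth and flat on $Z$; temperedness follows because every derivative of $\beta$ is a rational function in $g$ and derivatives of $g$, times the exponential factor, and $e^{-1/g^{2}}$ decays fast enough near $\{g=0\}$ to absorb the singular rational terms and remain polynomially bounded in the ambient coordinates.

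Applying this construction to a semi-algebraic shrinking $\{K_i\}$ of $\{U_i\}$ that still covers $M$ yields tempered $\beta_i \geq 0$ supported in $U_i$ with $\sigma := \sum_i \beta_i > 0$ on $M$; set $\alpha_i := \beta_i/\sigma$, which formally gives $\sum_i \alpha_i = 1$ and $\mathrm{supp}(\alpha_i) \subset U_i$. The main obstacle I expect is showing that $1/\sigma$ is tempered; I would handle this via a Lojasiewicz-type lower bound of the form $\sigma(x) \geq e^{-p(x)}$ for a tempered $p$, after which differentiating $1/\sigma$ yields a controllable mixture of polynomial and exponential factors that remains polynomially bounded on $M$. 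Together with Proposition \ref{AG-prop-ts-is-s}-style closure of tempered functions under multiplication, this gives $\alpha_i$ tempered. For part (2), the flatness of each $\alpha_i$ on $M \setminus U_i$ implies $\alpha_i\phi$ is flat on $M \setminus U_i$ for every $\phi \in \mathcal{S}(M)$; restricted to $U_i$, any Nash differential operator applied to $\alpha_i\phi$ remains bounded because the flatness along the boundary of $U_i$ in $M$ dominates the possible blow-up of tempered factors of $\alpha_i$ as one approaches that boundary, giving $\alpha_i\phi \in \mathcal{S}(U_i)$.
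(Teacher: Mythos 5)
There are two genuine gaps that each break the argument. First, your construction rests on the claim that a Nash function $g$ on $\mathbb{R}^N$ (or on an affine Nash manifold) can be chosen to vanish exactly on an arbitrary closed semi-algebraic set $Z$. This is false: Nash functions are real-analytic, so the zero set of a nonzero Nash function is an analytic set. A half-line such as $(-\infty,0]\subset\mathbb{R}$, or more generally a set of the form $\{p\geq 0\}$, is closed semi-algebraic but is not the zero set of any nonzero Nash function, and such sets arise as complements $M\setminus U_i$ for generic semi-algebraic covers. What is available is only a \emph{basic} function in the sense of Definition \ref{def-Basicness}: a continuous semi-algebraic $F$ on $M$ that is positive and Nash on $M_F=\{F\neq 0\}$ but merely continuous (not Nash, not even $C^1$) across $\partial M_F$. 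The paper's Appendix is built around exactly this weaker object, and it is why the construction there composes a basic function with a fixed smooth cutoff $\rho$ rather than exponentiating.

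Second, the normalization step does not go through as written. If your lower bound on $\sigma=\sum_i\beta_i$ is only of the form $\sigma(x)\geq e^{-p(x)}$ with $p$ tempered, then $1/\sigma\leq e^{p(x)}$ grows super-polynomially, and a function that is not polynomially bounded cannot be tempered, no matter how its derivatives look; there is no cancellation available to salvage this. The paper's argument sidesteps the issue entirely by arranging the bump functions so that the denominator is bounded below by a universal constant: the basic collection $\{F_l\}$ is constructed so that at every point of $M$ some $F_l\geq 1$, and then $\beta_l:=\rho\circ F_l$ with $\rho\equiv 1$ on $[1,\infty)$ gives $\sum_l\beta_l\geq 1$ pointwise, making $1/\sum_l\beta_l$ bounded (and tempered trivially). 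Finally, your reduction to the affine case by summing over the affine pieces $M_j$ quietly requires a tempered partition of unity subordinate to $\{M_j\}$ itself, which is the statement being proved; the paper's proof avoids this circularity by constructing the partition directly on the whole variety using the semi-algebraic metric of Corollary \ref{cor-metric}, proper refinements (Proposition \ref{prop-proper-ref}), and the majoration Lemma \ref{lem-majoration}, rather than by affine bootstrapping.
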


$\:$
\begin{thm}
\label{AG-thm-Char-Nash}{[}AG - Theorem 5.4.1{]} (Characterization
of Schwartz functions on open subset) Let $M$ be a Nash manifold,
$Z$ be a closed (semi-algebraic) subset and $U=M\backslash Z$. Let
$W_{Z}$ be the closed subspace of $\mathcal{S}\left(M\right)$ defined
by $W_{Z}:=\left\{ \phi\in\mathcal{S}\left(M\right)|\phi\text{ vanishes with all its derivatives on }Z\right\} $.
Then restriction and extension by 0 give an isomorphism $\mathcal{S}\left(U\right)\cong W_{Z}$.
\end{thm}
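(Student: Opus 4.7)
The plan is to introduce the two candidate maps---extension by zero $E:\mathcal{S}(U)\to W_Z$ and restriction $R:W_Z\to\mathcal{S}(U)$---verify each is a well-defined continuous linear map, observe that $R\circ E=\mathrm{id}$ and $E\circ R=\mathrm{id}$ by direct inspection, and finally invoke the Banach open mapping theorem (Theorem \ref{thm-Banach-open-mapping}) to conclude an isomorphism. That $W_Z$ is closed in $\mathcal{S}(M)$, and hence itself Fr\'echet by Proposition \ref{prop-Closed-Frechet-subspace}, is immediate because it is the intersection of the kernels of the continuous linear functionals $\phi\mapsto(D\phi)(z)$ for $z\in Z$ and $D$ a Nash differential operator on $M$.

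The heart of the proof is a tempered function $f$ on $M$ which is nonnegative, vanishes exactly on $Z$, and whose reciprocal $1/f$ on $U$ is a Nash (hence tempered) function on $U$. Such $f$ can be produced from a closed embedding $M\hookrightarrow\mathbb{R}^n$ and the squared Euclidean distance to $Z$, after pulling back and bounding appropriately to make it tempered on all of $M$; this is a standard consequence of Lojasiewicz-type inequalities for semi-algebraic sets. Given such an $f$, Proposition \ref{AG-prop-ts-is-s} implies that for any $\phi\in\mathcal{S}(U)$ and any $n\in\mathbb{N}$, the product $\phi/f^n\in\mathcal{S}(U)$ is bounded, so $|\phi|\le C_n f^n$ on $U$. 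Applying the same reasoning to each $D\phi$, for $D$ a Nash differential operator on $U$, yields super-polynomial decay of all derivatives of $\phi$ as one approaches $Z$.

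With these decay estimates in hand, the extension by zero $\tilde\phi$ is $C^\infty$ on $M$ with all its derivatives vanishing on $Z$, and for any Nash differential operator $D$ on $M$ the function $D\tilde\phi$ restricted to $U$ agrees with a Nash differential operator on $U$ applied to $\phi$, which is bounded; on $Z$ it is zero. This shows $\tilde\phi\in W_Z$ and gives continuity of $E$ by controlling each seminorm $\|\tilde\phi\|_D$ by finitely many seminorms of $\phi$. For $R$, given $\psi\in W_Z$, every Nash differential operator on $U$ is, up to multiplication by powers of $1/f$, a Nash differential operator on $M$; the flatness of $\psi$ on $Z$ (giving decay dominating any power of $f$) then ensures $\psi|_U\in\mathcal{S}(U)$ and gives continuity of $R$. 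The identities $R\circ E=\mathrm{id}_{\mathcal{S}(U)}$ and $E\circ R=\mathrm{id}_{W_Z}$ follow immediately, since $\psi$ vanishes on $Z$ for every $\psi\in W_Z$.

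The main obstacle I expect is the construction and quantitative control of the tempered function $f$ and its reciprocal: it is this ingredient that packages the geometry of how $Z$ sits inside $M$, and the same ingredient fails in the presence of bad singularities, which is precisely why the QN generalization in Theorem \ref{thm-hardest} requires the subanalytic machinery alluded to in the introduction.
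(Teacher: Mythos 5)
This theorem is cited from [AG, Theorem 5.4.1] and the paper under review does not itself prove it, so there is no in-paper proof to compare against; your proposal must stand on its own.

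Your overall framework is sound: define extension by zero $E$ and restriction $R$, show each is well-defined and continuous, verify $R\circ E=\mathrm{id}$, $E\circ R=\mathrm{id}$, and invoke the Banach open mapping theorem. The closedness of $W_Z$ as an intersection of kernels of the functionals $\phi\mapsto (D\phi)(z)$ is also correct. The problem is the central construction on which your estimates rest: a tempered function $f$ on $M$, nonnegative, vanishing exactly on $Z$, with $1/f$ Nash (or even merely tempered) on $U$. For a general closed semi-algebraic $Z$ no such $f$ exists. Take $M=\mathbb{R}^2$ and $Z=\{(x,0):x\ge 0\}$, a closed half-line. Any smooth $f$ vanishing identically on $Z$ has all one-sided $x$-derivatives at the origin equal to zero, hence by smoothness $\partial_x^k f(0,0)=0$ for every $k$, so $f(x,0)=o(|x|^N)$ for every $N$ as $x\to 0^-$. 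On the other hand, $1/f$ being tempered on $U$ relative to a Nash closed embedding $U\hookrightarrow\mathbb{R}^N$ forces a bound $1/f\le C\cdot d(\cdot,Z)^{-k}$ near $Z$ for some fixed $k$ (the embedding coordinates blow up only polynomially in $1/d$), i.e. $f\ge c\cdot d(\cdot,Z)^{k}$; this contradicts the super-polynomial vanishing along $\{(x,0):x<0\}$. The obstruction is precisely the ``boundary'' of $Z$: smoothness forces flatness there, which destroys any lower polynomial bound on $f$. Your appeal to Lojasiewicz is of no help here because Lojasiewicz controls semi-algebraic continuous functions (such as $d(\cdot,Z)^2$, which is not smooth), not arbitrary smooth functions vanishing on $Z$.

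A second, independent gap: the claim that ``every Nash differential operator on $U$ is, up to multiplication by powers of $1/f$, a Nash differential operator on $M$'' is false. Already on $U=\mathbb{R}\setminus\{0\}$ with $f=x^2$, the Nash function $1/|x|$ on $U$ cannot be written as $g(x)/x^{2n}$ with $g$ Nash on $\mathbb{R}$, since that would force $g=|x|^{2n-1}$, which is not smooth at $0$. What actually saves the theorem is that the coefficients of Nash differential operators on $U$, expressed via a Nash closed embedding of $U$, blow up at most polynomially in $1/d(\cdot,Z)$ near $Z$, while Schwartz bounds on $M$ combined with Taylor's theorem at points of $Z$ give $|\partial^\alpha\psi(x)|\le C_{\alpha,N}\, d(x,Z)^N$ for all $N$; this product is bounded. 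The proof in [AG] obtains these decay estimates directly from the embedding and from Taylor estimates, and reduces to a cover by basic open sets $M_F$ (with a single Nash function $F$ controlling the local geometry) via tempered partition of unity, rather than through a single global $f$ with a tempered reciprocal. You should replace your construction of $f$ with this local, cover-based argument, or at least with the direct Taylor-plus-embedding estimate.
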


\section{Geometry}
\begin{defn}
\label{Def-NQN}\textit{ ``Naive'' Quasi Nash category - NQN} -
Let $X$ be a locally closed semi-algebraic subset of $\mathbb{R}^{n}$.
A morphism in this category is a map from an NQN set $X$ to an NQN
set $Y\subset\mathbb{R}^{m}$ which is a restriction of a Nash map
on an open semi-algebraic neighborhood $U$ of $X$ to $\mathbb{R}^{m}$
such that $X$ is closed in $U$ and $X$ is mapped into $Y$. i,e,
$\varphi:X\to Y$, $\varphi:=g|_{X}$ where $X\subset U$, and $g:U\to\mathbb{R}^{m}$
is Nash.
\end{defn}

\begin{lem}
\label{NQN-equiv-property}Let $X,\:Y$ be NQN, and let $\varphi:X\to Y$
a continuous map. Then $\varphi$ is an NQN morphism if and only if
for any NQN function $f:Y\to\mathbb{R}$, $f\circ\varphi:X\to\mathbb{R}$
is an NQN function. (NQN function is an NQN map from an NQN set to
$\mathbb{R}$)
\end{lem}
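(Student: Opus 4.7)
The plan is to prove both directions directly from Definition~\ref{Def-NQN}, exploiting the fact that the coordinate projections on the ambient $\mathbb{R}^m$ provide a canonical family of Nash functions on $Y$ that pin down the components of any map into $\mathbb{R}^m$. The ``only if'' direction is a straightforward composition argument, while the ``if'' direction is proved by applying the hypothesis coordinate-by-coordinate and then assembling the extensions.

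For the forward direction, assume $\varphi = g|_X$ for a Nash map $g : U \to \mathbb{R}^m$ on an open semi-algebraic neighborhood $U$ of $X$ in which $X$ is closed. Given an NQN function $f : Y \to \mathbb{R}$, write $f = h|_Y$ for a Nash function $h : V \to \mathbb{R}$ on an open semi-algebraic neighborhood $V$ of $Y$ in $\mathbb{R}^m$ with $Y$ closed in $V$. I would then set $U' := g^{-1}(V) \cap U$; this is an open semi-algebraic subset of $\mathbb{R}^n$, it contains $X$ (because $g(X) = \varphi(X) \subset Y \subset V$), and $X$ is still closed in $U'$ since $U' \subset U$. The composition $h \circ (g|_{U'}) : U' \to \mathbb{R}$ is Nash and restricts on $X$ to $f \circ \varphi$, so $f \circ \varphi$ is NQN.

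For the backward direction, apply the hypothesis to the restrictions $f_i := \pi_i|_Y$ of the coordinate projections $\pi_i : \mathbb{R}^m \to \mathbb{R}$, which are Nash on all of $\mathbb{R}^m$ and hence NQN on $Y$. By assumption, each $\varphi_i := f_i \circ \varphi : X \to \mathbb{R}$ is NQN, so there exist open semi-algebraic neighborhoods $U_i$ of $X$ in $\mathbb{R}^n$, with $X$ closed in each $U_i$, and Nash functions $g_i : U_i \to \mathbb{R}$ with $g_i|_X = \varphi_i$. Set $U := \bigcap_{i=1}^m U_i$. Then $U$ is open semi-algebraic and contains $X$; moreover, closedness of $X$ in each $U_i$ is equivalent to $X = \overline{X} \cap U_i$ (closure in $\mathbb{R}^n$), and this property is preserved under finite intersection, so $X$ is closed in $U$. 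The map $g := (g_1, \dots, g_m) : U \to \mathbb{R}^m$ is Nash, its restriction to $X$ equals $\varphi$, and $\varphi(X) \subset Y$ by hypothesis. Thus $\varphi$ satisfies Definition~\ref{Def-NQN}.

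I do not foresee a substantive obstacle; the main item of care is the bookkeeping on neighborhoods, ensuring that each shrinking or intersection step preserves the condition that $X$ is closed in the ambient open set. Continuity of $\varphi$ is automatic in the backward direction (each $\varphi_i$ is smooth, being the restriction of a Nash function), so the continuity hypothesis serves only to keep the statement symmetric with the forward direction.
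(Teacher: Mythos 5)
Your proof is correct and follows essentially the same approach as the paper: the forward direction is composition of Nash maps after shrinking the domain, and the backward direction applies the hypothesis to the coordinate projections $y_1,\dots,y_m$ and intersects the resulting neighborhoods. Your write-up is somewhat more careful than the paper's in the forward direction (the paper merely asserts the composition is Nash without introducing $U' = g^{-1}(V)\cap U$), but the underlying idea is the same.
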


\begin{proof}
Let $\varphi$ be an NQN morphism. Take $f$ as required. As $f$
is a restriction of a Nash function on a neighborhood of $Y$, and
$\varphi$ is a restriction of a Nash map on a neighborhood of $X$,
we get a composition of Nash maps, which is Nash. Thus, $f\circ\varphi$
is an NQN function.

Now let $\varphi$ pullback NQN functions to NQN functions. As $Y\subset\mathbb{R}^{m}$,
$\varphi$ can be presented as $\varphi=\left(\varphi_{1},...,\varphi_{m}\right)$.
For each $i$ take the Nash function $f_{i}=y_{i}$. This results
in Nash functions on neighborhoods $U_{i}$ of $X$ where on each
$U_{i}$: $f_{i}\circ\varphi=y_{i}\circ\varphi=\varphi_{i}$ for any
$i$, what makes all $\varphi$'s coordinates NQN maps. \\
Take the intersection $U=\bigcap\limits _{i=1}^{n}U_{i}$ of all those
neighborhoods to get a neighborhood of $X$ where all of $\varphi$'s
coordinates are NQN simultaneously and therefore $\varphi$ is an
NQN map itself.
\end{proof}
Now let's take a broader category - 
\begin{defn}
\label{Def-Quasy-Nash}(1) Let $X$ be a locally closed semi-algebraic
subset of $\mathbb{R}^{n}$. Define the \textbf{$\mathbb{R}$-space
corresponding to $X$} to be the pair $\left(X,QN_{X}\right)$ where
$QN_{X}$ is defined to be a sheaf as follows: For each open $U\subset X$,
we say $f\in QN_{X}\left(U\right)$ if there exists a collection $\left\{ V_{i}\right\} _{i=1}^{m}$
of open semi-algebraic subsets of $\mathbb{R}^{n}$ and Nash functions
$f_{i}$ from these sets, such that $U_{i}:=V_{i}\cap X$ is an open
cover of $U$, for any $i,\:f|_{U_{i}}=f_{i}|_{U_{i}}$, and for any
$i,j$ $f_{i}|_{U_{i}\cap U_{j}}=f_{j}|_{U_{i}\cap U_{j}}$. This
is a sheafification of the NQN presheaf in definition \ref{Def-NQN}.

(2) An \textbf{affine QN variety $X$} is defined to be an $\mathbb{R}$-space
isomorphic to an $\mathbb{R}$-space obtained by sheafification from
so\textcolor{black}{me closed NQ}N set $Y\subset\mathbb{R}^{n}$.
I.e. $\left(X,O_{X}\right)$ is an affine QN variety if $\left(X,O_{X}\right)\cong\left(Y,O_{Y}\right)$
where $Y$ is \textcolor{black}{a closed }NQN set, and $O_{Y}$ is
the sheafification of the NQN functions on $Y$.
\end{defn}

\begin{rem}
\label{Rem-NQN-Like}Let $X$ be an affine QN variety. Then $X$ is
QN isomorphic to a closed set in $\mathbb{R}^{n}$, which by abuse
of notation is denoted by $X$ as well. Let $V\subset\mathbb{R}^{n}$
be some open semi-algebraic subset such that $U=V\cap X$ is an open
subset of $X$. Then $V$ is Nash diffeomorphic to some closed affine
Nash submanifold $V'$ in $\mathbb{R}^{N}$, by {[}BCR - Theorems
8.4\textcolor{black}{.6, 2.4.5{]}. Nash diffeomorphism is a QN isomorphism
(i.e. an isomorphism of ringed spaces) so $V$ is QN isomorphic to
$V'$. Denote the image of $U\subset V$ by $U'$, and note that it
is closed in $V'$. This makes $U'$ a closed subset of $\mathbb{R}^{N}$,
and so an NQN object. A Nash function on $V$ can be pulled-back to
a Nash function on $V'$, and a Nash function on $V'$ can be extended
to a Nash function on an open neighborhood of $V'$ (e.g. by {[}AG
- Thm 3.6.2 - Nash Tubular Neighborhood{]}). Thus, we can say that
a restriction of a Nash function from $V$ to $U$ is QN isomorphic
to an NQN function on the NQN set $U'$, which is QN isomorphic to
$U$. This means a locally closed semi-algebraic set of $\mathbb{R}^{n}$
has a natural stracture of an affine QN variety, and its $\mathbb{R}$-space
functions are restrictions of Nash functions from an open semi-algebraic
sets in which the affine QN-variety is closed. The same goes for locally
closed subsets of a QN variety.}
\end{rem}

\begin{lem}
Affine Nash manifolds form a full subcategory of affine QN varieties.
Affine algebraic varieties form a subcategory, not a full subcategory.
\end{lem}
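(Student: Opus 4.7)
The plan is to split the statement into three pieces handled in sequence: first, that every affine Nash manifold carries a canonical structure of affine QN variety by identifying the Nash sheaf $\mathcal{N}_{M}$ with $QN_{M}$; second, that QN morphisms between two such manifolds agree with Nash morphisms, giving fullness; third, that affine algebraic varieties embed as a faithful but not full subcategory via the fact that regular maps are Nash.

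For the first step, I embed $M$ as a closed Nash submanifold of some $\mathbb{R}^{n}$ using Definition \ref{AG-def-Nash-manif}(2). It is then a closed semi-algebraic subset, hence an NQN object, and inherits the canonical structure $(M,QN_{M})$ of Definition \ref{Def-Quasy-Nash}(2). I then verify $QN_{M}=\mathcal{N}_{M}$. One inclusion is immediate: the restriction to $M$ of a Nash function on an open semi-algebraic neighborhood in $\mathbb{R}^{n}$ is smooth and semi-algebraic, hence Nash. For the reverse inclusion I appeal to the Nash tubular neighborhood argument already invoked in Remark \ref{Rem-NQN-Like}, which locally realizes any Nash function on an open $U\subset M$ as the restriction of a Nash function from an open semi-algebraic neighborhood in $\mathbb{R}^{n}$, so it lies in $QN_{M}(U)$.

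For the second step, let $\varphi:M\to N$ be a QN morphism between two affine Nash manifolds, realized as closed Nash submanifolds of $\mathbb{R}^{n}$ and $\mathbb{R}^{m}$ respectively. Pulling back the coordinate functions $y_{1},\dots,y_{m}$ on $\mathbb{R}^{m}$, which are global sections of $\mathcal{N}_{N}=QN_{N}$, shows that each component $\varphi_{i}=y_{i}\circ\varphi$ lies in $\mathcal{N}_{M}(M)$, so $\varphi$ is a Nash morphism in the sense of Definition \ref{AG-def-Nash-manif}. The converse is clear, giving fullness; this is the sheafified analogue of Lemma \ref{NQN-equiv-property}.

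For the third step, an affine real algebraic variety $X\subset\mathbb{R}^{n}$ (Zariski closed) is in particular a closed semi-algebraic subset and so inherits the QN structure $(X,QN_{X})$; regular (polynomial) morphisms are Nash, hence QN morphisms, yielding a faithful functor. To see that it is not full, I take $X=Y=\mathbb{R}$ and the map $\varphi(x)=\sqrt{1+x^{2}}$: smooth and semi-algebraic (so Nash and hence a QN morphism), but not a polynomial and so not a morphism of real algebraic varieties. The main obstacle is the identification $\mathcal{N}_{M}=QN_{M}$ in step one, which relies on the Nash tubular neighborhood theorem to extend local Nash functions on $M$ to Nash functions on ambient open neighborhoods in $\mathbb{R}^{n}$; once that is in place, the remaining steps are routine.
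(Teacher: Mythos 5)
Your proof follows essentially the same route as the paper's: identify the Nash and QN sheaves on a Nash manifold via the Nash tubular neighborhood, verify fullness by checking that a QN morphism pulls back Nash functions to Nash functions (you do this via coordinate functions, the paper by gluing the local Nash functions using the sheaf property — a minor stylistic difference), and exhibit $\sqrt{1+x^{2}}$ to defeat fullness for affine algebraic varieties. One small imprecision worth noting: morphisms of affine real algebraic varieties are regular maps (locally rational), not necessarily polynomial as your parenthetical suggests, though this does not affect the argument since $\sqrt{1+x^{2}}$ is not regular either.
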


\begin{proof}
An affine Nash manifold is an $\mathbb{R}$-space isomorphic to an
$\mathbb{R}$-space associated to a smooth closed semi-algebraic subset
of $\mathbb{R}^{n}$. The sheaf on a Nash manifold is made of Nash
functions - smooth semi-algebraic functions. In the QN category the
sheaf is made of locally restrictions of Nash functions. Due to the
Nash tubular neighborhood, the sheaves are the same on Nash manifolds.
Let $\varphi:X\to Y$ be a QN morphism between the Nash manifolds
$X,\:Y$. Take a Nash function $f$ on $Y$. So there exists a cover
$\bigcup\limits _{i=1}^{n}X_{i}=X$ s.t. $g_{i}:=f\circ\varphi|_{X_{i}}$
is a Nash function. As Nash functions on the Nash manifold $X$ form
a sheaf, $\exists g\in\mathcal{N}\left(X\right)$ s.t. $g|_{X_{i}}=g_{i}$.
So for any QN map $\varphi$, and any $f\in\mathcal{N}\left(Y\right)$,
we get $f\circ\varphi\in\mathcal{N}\left(X\right)$.\\
Affine algebraic variety is isomorphic to an algebraic subset of $\mathbb{R}^{n}$.
But as morphisms in this category are rational maps, and not any Nash
map is such a map (e.g. $\sqrt{1+x^{2}}$), this is not a full subcategory.
\end{proof}
\begin{lem}
\label{lem-affine-QN-equiv}Let $X,\:Y$ be affine QN varieties and
$\tilde{X}\subset\mathbb{R}^{n},\:\tilde{Y}\subset\mathbb{R}^{m}$
the corresponding closed QN sets. Let $\varphi:X\to Y$ be a continuous
map and $\tilde{\varphi}:\tilde{X}\to\tilde{Y}$ the corresponding
map. Then $\varphi$ is a QN morphism if and only if there exists
an open cover $\bigcup\limits _{i=1}^{N}\tilde{X}_{i}=\tilde{X}$
such that $\tilde{\varphi}|_{\tilde{X}_{i}}$ is an NQN map for any
$i$.
\end{lem}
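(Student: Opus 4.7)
The plan is to use the sheafification definition of QN (Definition \ref{Def-Quasy-Nash}) together with the pullback viewpoint in Lemma \ref{NQN-equiv-property}. After identifying $X \cong \tilde{X}$ and $Y \cong \tilde{Y}$ via the given $\mathbb{R}$-space isomorphisms, both directions reduce to extracting or assembling Nash witnesses on open semi-algebraic neighborhoods in $\mathbb{R}^n$; the restricted topology only requires finite covers, so common refinements present no difficulty.

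For the ``if'' direction, assume each $\tilde{\varphi}|_{\tilde{X}_i}$ extends to a Nash map $g_i : U_i \to \mathbb{R}^m$ with $U_i \subset \mathbb{R}^n$ open semi-algebraic and $\tilde{X}_i$ closed in $U_i$. Given $f \in QN_Y(U)$ for open $U \subset Y$, Definition \ref{Def-Quasy-Nash} furnishes open semi-algebraic $V_k \subset \mathbb{R}^m$ covering $U$ (in the sense that $\bigcup_k V_k \cap \tilde{Y} \supset U$) with Nash representatives $f_k : V_k \to \mathbb{R}$ agreeing with $f$ on $V_k \cap \tilde{Y}$. The sets $W_{i,k} := U_i \cap g_i^{-1}(V_k)$ are open semi-algebraic (by continuity and Tarski--Seidenberg), the Nash compositions $f_k \circ g_i$ agree with $f \circ \varphi$ on $W_{i,k} \cap \tilde{X}$, and these intersections cover $\varphi^{-1}(U)$ as $(i,k)$ varies. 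This verifies $f \circ \varphi \in QN_X(\varphi^{-1}(U))$, so $\varphi$ is a QN morphism.

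For the ``only if'' direction, the coordinate projections $y_1, \ldots, y_m : \mathbb{R}^m \to \mathbb{R}$ restrict to NQN, hence QN, functions on $\tilde{Y}$, so each pullback $\varphi_i := y_i \circ \varphi$ lies in $QN_X(\tilde{X})$. By Definition \ref{Def-Quasy-Nash}, each $\varphi_i$ is witnessed by finitely many open semi-algebraic $V_{i,k} \subset \mathbb{R}^n$ with $\bigcup_k V_{i,k} \cap \tilde{X} = \tilde{X}$ and Nash $h_{i,k} : V_{i,k} \to \mathbb{R}$ agreeing with $\varphi_i$ on $V_{i,k} \cap \tilde{X}$. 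Taking the common refinement over $i = 1, \ldots, m$ produces a single finite cover $\{V_j\}$ and assembled Nash maps $g_j := (h_{1,j}, \ldots, h_{m,j}) : V_j \to \mathbb{R}^m$ agreeing with $\tilde{\varphi}$ on $\tilde{X}_j := V_j \cap \tilde{X}$. Since $\tilde{X}$ is closed in $\mathbb{R}^n$, each $\tilde{X}_j$ is closed in $V_j$, and $g_j(\tilde{X}_j) \subset \tilde{Y}$ by construction, so $\tilde{\varphi}|_{\tilde{X}_j}$ is NQN. The main care lies in coordinating the refinements across the $m$ coordinates and verifying the closedness and image conditions required by the NQN definition; neither is a substantive obstacle in the finite semi-algebraic setting.
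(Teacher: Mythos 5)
Your proof is correct and follows essentially the same route as the paper: both directions hinge on the coordinate-function trick (pulling back $y_1,\dots,y_m$ and refining the resulting covers) for the ``only if'' direction, and on composing the NQN witnesses $g_i$ with the Nash representatives of $f$ on pulled-back open sets for the ``if'' direction. The extra bookkeeping you supply (the explicit sets $W_{i,k}=U_i\cap g_i^{-1}(V_k)$, the assembled $g_j=(h_{1,j},\dots,h_{m,j})$) just spells out what the paper leaves implicit.
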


\begin{proof}
\textcolor{black}{Assume $\varphi$ is an affine QN morphism. For
any $f\in QN_{\tilde{Y}},\:\tilde{\varphi}^{*}f\in QN_{\tilde{X}}$.
}As $\tilde{Y}\subset\mathbb{R}^{m}$, $\tilde{\varphi}$ can be presented
as $\tilde{\varphi}=\left(\tilde{\varphi}_{1},...,\tilde{\varphi}_{m}\right)$.
For each $j\in\left\{ 1,...,m\right\} $ take the Nash function $f_{j}=\tilde{y}_{j}$.
When pulling it back, we get an open cover $\tilde{X}=\bigcup\limits _{k=1}^{N_{j}}\tilde{X}_{k}$
such that for any open subset $\tilde{X}_{k}$, the function $f_{j}\circ\tilde{\varphi}=\tilde{y}_{j}\circ\tilde{\varphi}=\tilde{\varphi}_{j}$
is a restriction of a Nash function, i.e. $\tilde{\varphi}_{j}|_{\tilde{X}_{k}}$
is an NQN map for any $k$. Now take the refinement of those $j$
covers of $\tilde{X}$, and denote it by $\tilde{X}=\bigcup\limits _{i=1}^{N}\tilde{X}_{i}$.
As a restriction of a Nash function is a Nash function, we get that
for any $i$, $\left(\tilde{\varphi}_{1},...,\tilde{\varphi}_{m}\right)|_{\tilde{X}_{i}}$
is an NQN map, what makes $\tilde{\varphi}|_{\tilde{X}_{i}}$ an NQN
map.

Now the other direction:

Assume $\tilde{\varphi}|_{\tilde{X}_{i}}$ is an NQN map for any $i$
and let $f\in QN_{Y}\left(U\right)$ for an open $U\subset Y$. Those
$f$ and $U$ correspond to some $\tilde{f}$ and $\tilde{U}$ respectively.
This means there is an open cover $\tilde{U}=\bigcup\limits _{j=1}^{M}\tilde{U}_{j}$
and for each $j$, $\tilde{f}|_{\tilde{U}_{j}}$ is a restriction
of a Nash function. The map $\tilde{\varphi}$ is continuous, so $\tilde{\varphi}|_{\tilde{X}_{i}\cap\tilde{\varphi}^{-1}\left(\tilde{U}_{j}\right)}:\tilde{X}_{i}\cap\tilde{\varphi}^{-1}\left(\tilde{U}_{j}\right)\to\tilde{U}$
is an NQN map, i.e. it is a restriction of a Nash map from an open
neighborhod of $\tilde{X}_{i}\cap\tilde{\varphi}^{-1}\left(\tilde{U}_{j}\right)$.
Therefore $\tilde{f}\circ\tilde{\varphi}|_{\tilde{X}_{i}\cap\tilde{\varphi}^{-1}\left(\tilde{U}_{j}\right)}$
is a restriction of a Nash function for any $j$ and any $i$. Thus,
$f\circ\varphi\in QN_{X}\left(\varphi^{-1}\left(U\right)\right)$.
\end{proof}
\begin{defn}
\label{def-gen-QN-variety}A \textbf{QN variety} is an $\mathbb{R}$-space
$\left(X,QN_{X}\right)$ where $X$ is a restricted topological space,
which has a finite cover $\bigcup\limits _{i=1}^{m}X_{i}=X$ of open
sets such that the $\mathbb{R}$-spaces $\left(X_{i},QN_{X}|_{X_{i}}\right)$
are affine QN varieties.
\end{defn}

\begin{lem}
\label{lem-gen-QN-equiv}Let $X,\:Y$ be general QN varieties. Let
$\varphi:X\to Y$ be a continuous map. Then $\varphi$ is a QN morphism
if and only if there exists an open affine cover $Y=\bigcup\limits _{i=1}^{M}Y_{i}$,
and an open \textcolor{black}{affine} cover $\bigcup\limits _{j=1}^{N_{i}}X_{ij}=X_{i}:=\varphi^{-1}\left(Y_{i}\right)$,
where each $X_{ij}$ corresponds to a locally closed semi-algebraic
set $\tilde{X}_{ij}$ such that the induced map $\tilde{\varphi}|_{\tilde{X}_{ij}}$
is an NQN map to the closed semi-algebraic set $\tilde{Y}_{i}$ isomorphic
to $Y_{i}$.
\end{lem}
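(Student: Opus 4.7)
The plan is to reduce both directions to the affine case handled by Lemma \ref{lem-affine-QN-equiv}, combining the definition of a general QN variety (Definition \ref{def-gen-QN-variety}) with the fact that $QN_X$ is a sheaf and that open subsets of affine QN varieties are themselves affine QN (Remark \ref{Rem-NQN-Like}).

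For the forward implication, suppose $\varphi$ is a QN morphism of general QN varieties. Fix finite affine open covers $Y=\bigcup_{i=1}^{M}Y_i$ and $X=\bigcup_{a}Z_a$ coming from Definition \ref{def-gen-QN-variety}. Since $\varphi$ is continuous, each $X_i:=\varphi^{-1}(Y_i)$ is open in $X$, and $\{X_i\cap Z_a\}_a$ is an open cover of $X_i$. Each $X_i\cap Z_a$ is open in the affine QN variety $Z_a$, hence is itself affine QN by Remark \ref{Rem-NQN-Like}. The restriction $\varphi|_{X_i\cap Z_a}\colon X_i\cap Z_a\to Y_i$ remains a morphism of $\mathbb R$-spaces, hence a QN morphism of affine QN varieties. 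Apply Lemma \ref{lem-affine-QN-equiv} to each such restriction, using a closed semi-algebraic model of $X_i\cap Z_a$, to obtain a finite refinement of $\{X_i\cap Z_a\}_a$ on whose pieces the associated set-level map into $\tilde Y_i$ is NQN. Relabelling this refinement yields the cover $\{X_{ij}\}_{j=1}^{N_i}$ of $X_i$ required by the lemma.

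For the reverse implication, suppose the covers $\{Y_i\}$ and $\{X_{ij}\}$ with the NQN property are given. To show that $\varphi$ is a QN morphism, it is enough to verify that for every open $U\subset Y$ and every $f\in QN_Y(U)$ one has $f\circ\varphi\in QN_X(\varphi^{-1}(U))$. Because $QN_X$ is a sheaf on the restricted topology of $X$, this is a local condition, so it suffices to check it on each $X_{ij}$. On $X_{ij}$ the map $\varphi$ factors through $Y_i$ and, under the chosen QN-isomorphisms $X_{ij}\cong\tilde X_{ij}$ and $Y_i\cong\tilde Y_i$, corresponds to the NQN map $\tilde\varphi|_{\tilde X_{ij}}$. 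Lemma \ref{lem-affine-QN-equiv}, applied with the trivial one-element cover, then shows that $\varphi|_{X_{ij}}\colon X_{ij}\to Y_i$ is a QN morphism of affine QN varieties, so $(f|_{U\cap Y_i})\circ\varphi|_{X_{ij}\cap\varphi^{-1}(U)}$ lies in $QN_X$ on its domain. These local pullbacks tautologically agree on overlaps, and the sheaf axiom glues them into $f\circ\varphi\in QN_X(\varphi^{-1}(U))$.

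The argument is largely bookkeeping; the one point requiring care is that the hypothesis uses a locally closed model $\tilde X_{ij}$, while Lemma \ref{lem-affine-QN-equiv} is phrased in terms of a closed semi-algebraic model. Passing between the two realizations is handled by Remark \ref{Rem-NQN-Like}, which shows that a locally closed semi-algebraic set carries a canonical affine QN structure and that being NQN as a map is independent of which closed or locally closed realization one chooses, as long as the identifications of source and target are compatible. Once this is in hand, the NQN property transports between realizations locally, and both implications go through as described.
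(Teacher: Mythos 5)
Your proof is correct and follows essentially the same route as the paper's: both directions reduce to Lemma \ref{lem-affine-QN-equiv} by pulling back the affine cover of $Y$ along $\varphi$, intersecting with an affine cover of $X$ to get affine opens $X_i\cap Z_a$, and using Remark \ref{Rem-NQN-Like} to pass between locally closed and closed models, with the sheaf property of $QN_X$ handling the gluing in the reverse direction. The only cosmetic difference is that you spell out the intersection construction and the gluing step more explicitly than the paper does.
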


\begin{proof}
\textcolor{black}{Assume $\varphi$ is a QN morphism. Take an open
affine cover }$Y=\bigcup\limits _{i=1}^{M}Y_{i}$, and for each $i$
take the QN variety $X_{i}:=\varphi^{-1}\left(Y_{i}\right)$. Cover
it by some open affine subvarieties $\bigcup\limits _{j=1}^{N_{i}}X^{ij}=X_{i}$.
Now we have a restricted morphism $\varphi|_{X^{ij}}:X^{ij}\to Y_{i}$
of affine QN varieties and we can use Lemma \ref{lem-affine-QN-equiv}.
Refining the covers will yield the proof for the first direction.

To prove the other direction we start with a function $f\in QN_{Y}\left(U\right)$
for an open $U\subset Y$. By remark \ref{Rem-NQN-Like} each $X_{ij}$
corresponds to an affine QN variety, so by Lemma \ref{lem-affine-QN-equiv}
we have QN morphisms $\text{\ensuremath{\varphi}}|_{X_{ij}}:X_{ij}\to Y_{i}$.
Thus, $f|_{Y_{i}\cap U}\in QN_{Y}\left(Y_{i}\cap U\right)$ is pulled
back to $f\circ\varphi|_{X_{ij}\cap\varphi^{-1}\left(U\right)}\in QN_{X}\left(X_{ij}\cap\varphi^{-1}\left(U\right)\right)$
for any $i$, and $j$. As QN functions form a sheaf, we get that
$f\circ\varphi|_{\varphi^{-1}\left(U\right)}\in QN_{X}\left(\varphi^{-1}\left(U\right)\right)$. 
\end{proof}

\section{Schwartz Functions, Tempered Functions and Tempered Distributions}

\subsection{Naive Quasi-Nash}
\begin{claim}
\label{claim-Sch-from-open}Let $X$ be a NQN set, and $U,\:V\subset\mathbb{R}^{n}$
be open sets containing $X$ as a closed subset. Then $\mathcal{S}\left(U\right)/I_{Sch}^{U}\left(X\right)\cong\mathcal{S}\left(V\right)/I_{Sch}^{V}\left(X\right)$.
\end{claim}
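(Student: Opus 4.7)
The plan is to reduce to the nested case and then use the characterization of Schwartz functions on open subsets together with a tempered partition of unity. First I reduce to the case $V \subseteq U$: the intersection $W := U \cap V$ is again open semi-algebraic and contains $X$ as a closed subset, so if the claim holds whenever one open set contains the other, applying it twice gives the general statement via $\mathcal{S}(U)/I_{Sch}^U(X) \cong \mathcal{S}(W)/I_{Sch}^W(X) \cong \mathcal{S}(V)/I_{Sch}^V(X)$.

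Assume now $V \subseteq U$. Since $X$ is closed in $U$ and contained in $V$, the pair $\{V,\, U\setminus X\}$ is a finite open cover of the Nash manifold $U$ (Proposition \ref{AG-prop-sub-of-Nash-is-Nash}). By Theorem \ref{AG-thm-part-of-unity} there is a tempered partition of unity $\alpha + \beta = 1$ on $U$ subordinate to this cover, with $\mathrm{supp}(\alpha) \subseteq V$ and $\mathrm{supp}(\beta) \subseteq U\setminus X$; in particular $\alpha|_X \equiv 1$ and $\beta|_X \equiv 0$.

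Using $\alpha$ I build maps in both directions. Define $\Phi([f]) := [(\alpha f)|_V]$. Since $\alpha f \in \mathcal{S}(U)$ by Proposition \ref{AG-prop-ts-is-s} and $\mathrm{supp}(\alpha f) \subseteq \mathrm{supp}(\alpha) \subseteq V$, the function $\alpha f$ vanishes on an open neighborhood of $Z := U\setminus V$, hence is flat on $Z$. Theorem \ref{AG-thm-Char-Nash} applied with $M = U$ and this $Z$ then ensures $(\alpha f)|_V \in \mathcal{S}(V)$, and $f|_X = 0$ forces $(\alpha f)|_X = 0$, giving well-definedness modulo the ideal. In the opposite direction, define $\Psi([g]) := [\tilde g]$, where $\tilde g$ is the extension of $g$ by zero from $V$ to $U$; by the same theorem $\tilde g \in \mathcal{S}(U)$, and $\tilde g|_X = g|_X$ since $X \subseteq V$, so $\Psi$ is well defined as well.

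It remains to check that $\Psi \circ \Phi$ and $\Phi \circ \Psi$ act as the identity. For $\Psi \circ \Phi$: extending $(\alpha f)|_V$ by zero back to $U$ recovers $\alpha f$ itself (its support already lies in $V$), and $f - \alpha f = \beta f$ vanishes on $X$, so $\Psi \Phi [f] = [\alpha f] = [f]$. For $\Phi \circ \Psi$: the composite sends $[g]$ to $[(\alpha \tilde g)|_V] = [\alpha|_V \cdot g]$, and $g - \alpha|_V\cdot g = \beta|_V \cdot g$ vanishes on $X$. The main technical point is the use of Theorem \ref{AG-thm-Char-Nash}: both halves rely on the equivalence, for a Nash manifold $M$ and an open semi-algebraic $V \subseteq M$, between $\mathcal{S}(V)$ and the subspace of $\mathcal{S}(M)$ of functions flat on $M \setminus V$. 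The tempered cutoff $\alpha$ is what upgrades an arbitrary Schwartz function on $U$ into one whose restriction to $V$ is again Schwartz; all constructed maps are compositions of multiplication by tempered functions together with restriction and extension by zero, so they are continuous, and the isomorphism can be recorded at the level of Fr\'echet quotients by invoking Theorem \ref{thm-Banach-open-mapping} if desired.
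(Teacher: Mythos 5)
Your proof is correct and follows essentially the same route as the paper: reduce to the nested case via $U \cap V$, take a tempered partition of unity subordinate to $\{V,\,U\setminus X\}$ so that the cutoff $\alpha$ is identically $1$ on $X$, and combine Theorem \ref{AG-thm-part-of-unity} with Theorem \ref{AG-thm-Char-Nash} for the extension-by-zero identification. The only small difference is that you re-derive the fact that $(\alpha f)|_V \in \mathcal{S}(V)$ from part (1) of Theorem \ref{AG-thm-part-of-unity} together with a flatness argument, whereas the paper directly invokes part (2) of that theorem; you are also more explicit in writing the two maps and verifying they are mutually inverse, and in flagging the Banach open-mapping step that the paper leaves implicit.
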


\begin{proof}
We start by showing $\mathcal{S}\left(U\right)/I_{Sch}^{U}\left(X\right)\cong\mathcal{S}\left(U\cap V\right)/I_{Sch}^{U\cap V}\left(X\right)$.
By Theorem \ref{AG-thm-Char-Nash}, $\mathcal{S}\left(U\cap V\right)$
is isomorphic to a closed subspace of $\mathcal{S}\left(U\right)$.
Thus, it is enough to check that ${\color{black}{\color{black}\mathcal{S}\left(X\right)}:=\mathcal{S}\left(U\right)/I_{Sch}^{U}\left(X\right)}$
and $\mathcal{S}\left(U\cap V\right)/I_{Sch}^{U\cap V}\left(X\right)$
are equal as sets, i.e. that a Schwartz function on $X$ is a restriction
of a Schwartz function on $U$ if and only if it is a restriction
of a Schwartz function on $U\cap V$. Let $f\in\mathcal{S}\left(U\cap V\right)/I_{Sch}^{U\cap V}\left(X\right)$.
There exists $F\in\mathcal{S}\left(U\cap V\right)$ such that $F|_{X}=f$.
By Theorem \ref{AG-thm-Char-Nash}, extending $F$ by zero to a function
on $U$ (denote it by $\tilde{F}$) is a function in $\mathcal{S}\left(U\right)$.
Then $f=\tilde{F}|_{X}$ and so $f\in\mathcal{S}\left(U\right)/I_{Sch}^{U}\left(X\right)$.
For the other direction, let $f\in\mathcal{S}\left(U\right)/I_{Sch}^{U}\left(X\right)$.
There exists $F\in\mathcal{S}\left(U\right)$ such that $F|_{X}=f$.
Denote $U'=U\backslash X$. $\left\{ U',U\cap V\right\} $ form an
open cover of $U$ and so, by Theorem \ref{AG-thm-part-of-unity},
there exist tempered functions $\alpha_{1},\:\alpha_{2}$ such that
$supp\left(\alpha_{1}\right)\subset U\cap V,\:supp\left(\alpha_{2}\right)\subset U'$
and $\alpha_{1}+\alpha_{2}=1$ as a real valued function on $U$.
Moreover, $\alpha_{1}$ and $\alpha_{2}$ can be chosen such that
$\left(\alpha_{1}\cdot F\right)|_{U\cap V}\in\mathcal{S}\left(U\cap V\right)$.
As $\alpha_{1}|_{X}=1$, it follows that $\left(\left(\alpha_{1}\cdot F\right)|_{U\cap V}\right)|_{X}=\left(\alpha_{1}\cdot F\right)|_{X}=F|_{X}=f$,
and so $f\in\mathcal{S}\left(U\cap V\right)/I_{Sch}^{U\cap V}\left(X\right)$.

As we can use the same proof exactly to show 
\[
\mathcal{S}\left(V\right)/I_{Sch}^{V}\left(X\right)\cong\mathcal{S}\left(U\cap V\right)/I_{Sch}^{U\cap V}\left(X\right),
\]
we end up having the result.
\end{proof}
\begin{claim}
\label{claim-temp-from-open}Let $X$ be a NQN set, and $U,\:V\subset\mathbb{R}^{n}$
be open sets containing $X$ as a closed subset. Then for any $\mathcal{T}\left(U\right)|_{X}=\mathcal{T}\left(V\right)|_{X}$.
\begin{proof}
As $U,\:V$ are open Nash submnifolds (Preposition \ref{AG-prop-sub-of-Nash-is-Nash}),
as tempered functions form a sheaf on $U\cup V$ (by Proposition \ref{AG-prop-aff-temp-sheaf})
and as we may use tempered partition of unity on $U\cup V$ (Theorem
\ref{AG-thm-part-of-unity}), the claim easily follows.
\end{proof}
\end{claim}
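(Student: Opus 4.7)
The plan is to prove the equality by two inclusions; by the symmetry of $U$ and $V$ in the hypothesis it suffices to establish $\mathcal{T}(U)|_{X}\subset\mathcal{T}(V)|_{X}$. The main tools will be the tempered partition of unity on the ambient Nash manifold $U\cup V$ (Theorem \ref{AG-thm-part-of-unity}) and the sheaf property of tempered functions (Proposition \ref{AG-prop-aff-temp-sheaf}); both are available because the open semi-algebraic set $U\cup V$ is a Nash manifold by Proposition \ref{AG-prop-sub-of-Nash-is-Nash}. Since $X$ is closed in both $U$ and in $V$, it is closed in $U\cup V$, so $(U\cup V)\setminus X$ is open; together with $U\cap V$ (which contains $X$) it forms an open cover of $U\cup V$. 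Applying the partition-of-unity theorem yields tempered functions $\beta,\gamma\in\mathcal{T}(U\cup V)$ with $\beta+\gamma=1$, $\mathrm{supp}(\beta)\subset U\cap V$, and $\mathrm{supp}(\gamma)\subset(U\cup V)\setminus X$; in particular $\beta|_{X}\equiv 1$.

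Given $f\in\mathcal{T}(U)$, I would form $h:=\beta\cdot f|_{U\cap V}\in\mathcal{T}(U\cap V)$, using that $\mathcal{T}$ is a sheaf of algebras. Because $\mathrm{supp}(\beta)\subset U\cap V$, the pair $\{U\cap V,\,V\setminus\mathrm{supp}(\beta)\}$ is an open cover of $V$, and $h$ vanishes on the overlap $(U\cap V)\setminus\mathrm{supp}(\beta)$ since $\beta$ does there. The sheaf property then glues $h$ with the zero function on $V\setminus\mathrm{supp}(\beta)$ to a tempered function $\tilde{f}\in\mathcal{T}(V)$. Since $X\subset U\cap V$, restricting gives $\tilde{f}|_{X}=\beta|_{X}\cdot f|_{X}=f|_{X}$, which exhibits $f|_{X}\in\mathcal{T}(V)|_{X}$.

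The only delicate point I foresee is justifying that $V\setminus\mathrm{supp}(\beta)$ is an open set in the restricted topology so that the gluing is legitimate; this amounts to ensuring that the tempered bump $\beta$ can be taken with closed semi-algebraic support, which is implicit in the standard Nash partition-of-unity construction. Apart from this routine check, the argument reduces to the partition-of-unity-plus-sheaf-gluing manipulation sketched above, exactly paralleling the analogous step in the proof of Claim \ref{claim-Sch-from-open}.
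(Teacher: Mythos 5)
Your proof is correct and uses exactly the same three ingredients the paper cites in its (very terse) proof — that open semi-algebraic subsets of $\mathbb{R}^n$ are affine Nash manifolds, that tempered functions form a sheaf of algebras, and tempered partition of unity — and it spells them out in the same way the paper's analogous argument for Claim~\ref{claim-Sch-from-open} does, by covering with $U\cap V$ and its complement and peeling off a tempered bump. The ``delicate point'' you flag about $V\setminus\mathrm{supp}(\beta)$ being semi-algebraic open is real but handled by the partition-of-unity construction in Appendix~A: the $\alpha_i$ there are built as $\rho\circ F_l$ for continuous semi-algebraic $F_l$ and so have support contained in closed semi-algebraic sets $\{F_l\geq 0.1\}\subset U_i$, and one should glue over $\{U\cap V,\; V\setminus C\}$ for such a semi-algebraic $C\supset\mathrm{supp}(\beta)$ rather than over $V\setminus\mathrm{supp}(\beta)$ itself.
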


\begin{defn}
\textit{Let $X$ be an NQN set. A Schwartz function on $X$} is a
restriction of a Schwartz function from an open semi-algebraic subset
of $\mathbb{R}^{n}$ in which $X$ is closed, to $X$. Equivalently,
we can define the space of Schwartz functions on $X$ as $\mathcal{S}\left(X\right):=\mathcal{S}\left(U\right)/I_{Sch}^{U}\left(X\right)$.

\textit{A tempered function on $X$} is a restriction of a tempered
function from an open semi-algebraic subset of $\mathbb{R}^{n}$ in
which $X$ is closed, to $X$. The space of tempered functions on
$X$ is denoted by $\mathcal{T}\left(X\right)$.
\end{defn}

\begin{lem}
\label{lem-Sch-NQN-is-Frechet}Let $X$ be an NQN set. Then $\mathcal{S}\left(X\right)$
is a Fréchet space.
\end{lem}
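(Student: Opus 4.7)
The plan is to realize $\mathcal{S}(X)$ as the quotient of a known Fréchet space by a closed subspace, and then invoke the standard fact that such a quotient inherits a Fréchet structure.

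First, by Claim \ref{claim-Sch-from-open}, we may fix once and for all an open semi-algebraic set $U\subset\mathbb{R}^n$ in which $X$ is closed, and work with the single presentation $\mathcal{S}(X)=\mathcal{S}(U)/I_{Sch}^U(X)$; any other choice of such $U$ gives an isomorphic quotient. By Proposition \ref{AG-prop-sub-of-Nash-is-Nash}, $U$ is a Nash manifold, so by Corollary \ref{AG-prop-Sch-on-Nash-is-Fre} the space $\mathcal{S}(U)$ is Fréchet with the topology given by the seminorms $\|\phi\|_D:=\sup_{x\in U}|D\phi(x)|$.

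Next I would show that $I_{Sch}^U(X)$ is a closed subspace of $\mathcal{S}(U)$. The key observation is that for each $x\in X\subset U$ the evaluation functional $\mathrm{ev}_x:\mathcal{S}(U)\to\mathbb{R}$, $\phi\mapsto\phi(x)$, is continuous, because $|\phi(x)|\le\|\phi\|_{\mathrm{Id}}=\sup_{y\in U}|\phi(y)|$ and $\|\cdot\|_{\mathrm{Id}}$ is one of the defining seminorms. Therefore
\[
I_{Sch}^U(X)=\{\phi\in\mathcal{S}(U)\mid\phi|_X\equiv 0\}=\bigcap_{x\in X}\ker(\mathrm{ev}_x)
\]
is an intersection of closed linear subspaces, hence closed.

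Finally, the quotient of a Fréchet space by a closed linear subspace is again Fréchet in the quotient topology (this is a standard fact in the theory of Fréchet spaces, and follows from Theorem \ref{thm-Banach-open-mapping} together with completeness of the quotient metric). Thus $\mathcal{S}(X)=\mathcal{S}(U)/I_{Sch}^U(X)$ is a Fréchet space. There is no real obstacle here; the only thing to be a little careful about is the independence of $U$, but this has already been arranged by Claim \ref{claim-Sch-from-open}.
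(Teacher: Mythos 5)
Your proof is correct and follows essentially the same route as the paper: realize $\mathcal{S}(X)$ as $\mathcal{S}(U)/I_{Sch}^U(X)$, observe that $\mathcal{S}(U)$ is Fréchet by Proposition \ref{AG-prop-sub-of-Nash-is-Nash} and Corollary \ref{AG-prop-Sch-on-Nash-is-Fre}, note that $I_{Sch}^U(X)$ is closed, and conclude via the standard fact that a Fréchet space modulo a closed subspace is Fréchet (the paper cites [T, Prop.~7.9] together with Proposition \ref{prop-Closed-Frechet-subspace} for this last step). The only difference is cosmetic: you spell out why $I_{Sch}^U(X)$ is closed, using continuity of the evaluation functionals against the seminorm $\|\cdot\|_{\mathrm{Id}}$, whereas the paper simply asserts closedness; this is a reasonable elaboration and your justification is sound.
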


\begin{proof}
$\mathcal{S}\left(U\right)$ is a Fréchet space (see {[}Proposition
\ref{AG-prop-sub-of-Nash-is-Nash}, Proposition \ref{AG-prop-Sch-on-Nash-is-Fre}{]})
and as $I_{Sch}^{U}$ is a closed subset of $\mathcal{S}\left(U\right)$,
we get that their quotient is a Fréchet space as well (by Proposition
\ref{prop-Closed-Frechet-subspace} and {[}T - Proposition 7.9{]}).
\end{proof}
\begin{lem}
\label{lem-NQN-Frechet-iso}Let $\varphi:X_{1}\to X_{2}$ be an NQN
isomorphism, i.e. a bijective morphism whose inverse is also an NQN
morphism. Then $\varphi^{*}|_{\mathcal{S}\left(X_{2}\right)}:\mathcal{S}\left(X_{2}\right)\to\mathcal{S}\left(X_{1}\right)$
is an isomorphism of Fréchet spaces.
\end{lem}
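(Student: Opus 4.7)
The plan is to realize $\varphi^*$ as a composition of Fréchet space isomorphisms that factor through the graph of $\varphi$ inside a product Nash manifold. Extend $\varphi$ and $\varphi^{-1}$ to Nash maps $g \colon U_1 \to \mathbb{R}^{n_2}$ and $h \colon U_2 \to \mathbb{R}^{n_1}$ on open semi-algebraic neighborhoods of $X_1, X_2$ in which $X_i$ is closed. Shrinking the $U_i$ appropriately (which, by Claim~\ref{claim-Sch-from-open}, does not change $\mathcal{S}(X_i)$ since $X_i$ remains closed in the smaller set), I may assume $g(U_1) \subseteq U_2$ and $h(U_2) \subseteq U_1$. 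Inside the Nash manifold $U_1 \times U_2$, the graphs $\Gamma_g = \{(x, g(x)) : x \in U_1\}$ and $\Gamma_h = \{(h(y), y) : y \in U_2\}$ are closed Nash submanifolds, each Nash-diffeomorphic to $U_1$, respectively $U_2$, via the coordinate projections $\pi_1, \pi_2$. The closed NQN subset $\Gamma_\varphi = \{(x, \varphi(x)) : x \in X_1\} \subset U_1 \times U_2$ is contained in both $\Gamma_g$ and $\Gamma_h$, since $h(\varphi(x)) = x$ for $x \in X_1$.

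The main step is to show that both $\mathcal{S}(X_1)$ and $\mathcal{S}(X_2)$ are canonically isomorphic as Fréchet spaces to the common quotient $Q := \mathcal{S}(U_1 \times U_2)/I_{Sch}^{U_1 \times U_2}(\Gamma_\varphi)$. For $X_1$: the Nash diffeomorphism $\pi_1 \colon \Gamma_g \to U_1$ induces a Fréchet iso $\pi_1^* \colon \mathcal{S}(U_1) \to \mathcal{S}(\Gamma_g)$ that carries $I_{Sch}^{U_1}(X_1)$ onto $I_{Sch}^{\Gamma_g}(\Gamma_\varphi)$, so $\mathcal{S}(X_1) \cong \mathcal{S}(\Gamma_g)/I_{Sch}^{\Gamma_g}(\Gamma_\varphi)$. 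Composing the continuous surjection $\mathcal{S}(U_1 \times U_2) \twoheadrightarrow \mathcal{S}(\Gamma_g)$ given by Theorem~\ref{AG-thm-red-to-cl-is-onto} with the quotient map yields a continuous surjection $\mathcal{S}(U_1 \times U_2) \twoheadrightarrow \mathcal{S}(X_1)$ whose kernel is precisely $I_{Sch}^{U_1 \times U_2}(\Gamma_\varphi)$, because $F|_{\Gamma_g}$ vanishes on $\Gamma_\varphi$ iff $F$ does. Banach's open mapping theorem (Theorem~\ref{thm-Banach-open-mapping}) then promotes the induced continuous bijection to a Fréchet iso $Q \cong \mathcal{S}(X_1)$. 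The symmetric construction via $\Gamma_h$, $\pi_2$, and $h$ yields $Q \cong \mathcal{S}(X_2)$, and composing gives the desired Fréchet iso $\mathcal{S}(X_2) \cong \mathcal{S}(X_1)$.

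It remains to identify this composite iso with $\varphi^*$, which I verify by chasing a representative: lift $f \in \mathcal{S}(X_2)$ to $F \in \mathcal{S}(U_2)$, extend $F \circ \pi_2 \in \mathcal{S}(\Gamma_h)$ to some $\tilde F \in \mathcal{S}(U_1 \times U_2)$ via Theorem~\ref{AG-thm-red-to-cl-is-onto}, restrict to $\Gamma_g$, and pull back along $x \mapsto (x, g(x))$ to obtain $\tilde F(x, g(x)) \in \mathcal{S}(U_1)$. For $x \in X_1$ we have $(x, \varphi(x)) \in \Gamma_\varphi \subset \Gamma_h$, so $\tilde F(x, g(x)) = \tilde F(x, \varphi(x)) = (F \circ \pi_2)(x, \varphi(x)) = F(\varphi(x)) = f(\varphi(x)) = \varphi^* f(x)$. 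The main technical obstacle is the kernel computation — verifying that both continuous surjections $\mathcal{S}(U_1 \times U_2) \twoheadrightarrow \mathcal{S}(X_i)$ have kernel exactly $I_{Sch}^{U_1 \times U_2}(\Gamma_\varphi)$ — which requires careful bookkeeping of Schwartz ideals under restriction to Nash submanifolds and pullback by Nash diffeomorphisms; Banach open mapping is essential to upgrade the resulting algebraic bijection into a Fréchet-space isomorphism.
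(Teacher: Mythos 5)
Your overall strategy — realizing both $\mathcal{S}(X_i)$ as the common quotient $\mathcal{S}(U_1\times U_2)/I_{Sch}^{U_1\times U_2}(\Gamma_\varphi)$ via restriction to the graphs of the Nash extensions, then upgrading to a Fr\'echet isomorphism by Banach's open mapping theorem — is essentially the paper's proof; the paper's shear map $\alpha(x,y)=(x,y+g_1(x))$ just presents your $\Gamma_g$ as the image of $U_1\times\{0\}$. However, your opening reduction is a genuine gap: you assert that after shrinking you may assume $g(U_1)\subseteq U_2$ \emph{and} $h(U_2)\subseteq U_1$ simultaneously. This cannot in general be arranged, and without it $\Gamma_g$ and $\Gamma_h$ are not both subsets of $U_1\times U_2$, so the two applications of Theorem~\ref{AG-thm-red-to-cl-is-onto} do not live in the same ambient space.

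To see the obstruction concretely: take $X_1=X_2=\{0\}\subset\mathbb{R}$ and suppose the chosen Nash extensions happen to be $g(x)=h(x)=2x$ on $(-1,1)$. Then $g(U_1)\subseteq U_2$ and $h(U_2)\subseteq U_1$ force $4U_1\subseteq U_1$, impossible for a nonempty bounded open neighborhood of $0$ contained in $(-1,1)$. The culprit is that $h\circ g$ agrees with the identity only on $X_1$, not on any neighborhood, so the naive iterated shrink $U_1\leftarrow U_1\cap g^{-1}(U_2)$, $U_2\leftarrow U_2\cap h^{-1}(U_1)$, \dots need not stabilize. The paper avoids this by first placing the graph in the ``half-product'' $U_1\times\mathbb{R}^{n_2}$, where $\Gamma_{g}$ is automatically a closed Nash submanifold, establishing $\mathcal{S}(X_1)\cong\mathcal{S}(U_1\times\mathbb{R}^{n_2})/I_{Sch}(\Gamma_\varphi)$ there, and only then invoking Claim~\ref{claim-Sch-from-open} to shrink the ambient space to $U_1\times U_2$ (using that $\Gamma_\varphi$ is closed in both). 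Equivalently, you could replace your $\Gamma_g$ by $\Gamma_g\cap(U_1\times U_2)$, the graph of $g$ over $U_1\cap g^{-1}(U_2)$; since $X_1$ is still closed in $U_1\cap g^{-1}(U_2)$, Claim~\ref{claim-Sch-from-open} shows $\mathcal{S}(X_1)$ is unchanged, and no simultaneous shrinking is needed. With that repair, the rest of your argument — the kernel computation, the Banach open mapping step, and the verification that the composite isomorphism is $\varphi^*$ — is correct.
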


\begin{proof}
By definition we have an open semialgebraic neighborhood $U_{1}$
of $X_{1}$ and a Nash map $g_{1}:U_{1}\to\mathbb{R}^{n_{2}}$ such
that $g_{1}|_{X}=\varphi$. Note that $U_{1}$ is an affine Nash manifold.

Similarly to the construction of $U_{1}$ and $g_{1}$ above, we may
construct an open $U_{2}\subset\mathbb{R}^{n_{2}}$ and a map $g_{2}:U_{2}\to\mathbb{R}^{n_{1}}$
such that $g_{2}|_{X_{2}}=\varphi{}^{-1}$. Note that $g_{2}\neq g_{1}^{-1}$:
in general $g_{1}$ is not a bijection and $U_{1}\ncong U_{2}$.

Consider the following diagram, where $\alpha$ is defined by $\alpha(x,y):=(x,y+g_{1}(x))$.

\includegraphics{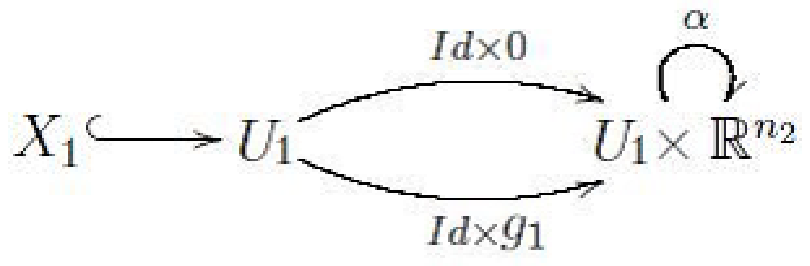}

Clearly $U_{1}\times\{0\}$ is an affine Nash manifold isomorphic
to $U_{1}$. Denote $\hat{U}_{1}:=\alpha(U_{1}\times\{0\})$, then
$\alpha$ restricted to $U_{1}\times\{0\}$ is an isomorphism of the
affine Nash manifolds $U_{1}\times\{0\}$ and $\hat{U}_{1}$ \textendash{}
the inverse map is given by $\alpha^{-1}(x,y):=(x,y-g_{1}(x))$. Thus
we have: 
\[
\mathcal{S}(X_{1})\cong\mathcal{S}(U_{1})/I_{Sch}^{U}(X_{1})\cong\mathcal{S}(\hat{U}_{1})/I_{Sch}^{\hat{U}_{1}}(\alpha(X_{1}\times\{0\}))=\mathcal{S}(\hat{U}_{1})/I_{Sch}^{\hat{U}_{1}}((Id\times\varphi)(X_{1})),
\]

where the first equivalence is by definition, the second is due the
fact that $U_{1}\cong U_{1}\times\{0\}\cong\hat{U}_{1}$ and $\mathcal{S}(U_{1})\cong\mathcal{S}(U_{1}\times\{0\})\cong\mathcal{S}(\hat{U}_{1})$,
and the third follows from the fact that $g_{1}|_{X_{1}}=\varphi$.
As always $I_{Sch}^{U_{1}}(X_{1})$ is the ideal in $\mathcal{S}(U_{1})$
of Schwartz functions identically vanishing on $X_{1}$.

As $\tilde{U_{1}}$ is closed in $U_{1}\times\mathbb{R}^{n_{2}}$
(as it is defined by Nash map on $U_{1}\times\mathbb{R}^{n_{2}}$),
then by Theorem \ref{AG-thm-red-to-cl-is-onto} we get that 
\[
\mathcal{S}(\hat{U}_{1})/I_{Sch}^{\hat{U}_{1}}((Id\times\varphi)(X_{1}))\cong\mathcal{S}(U_{1}\times\mathbb{R}^{n_{2}})/I_{Sch}^{U_{1}\times\mathbb{R}^{n_{2}}}((Id\times\varphi)(X_{1})).
\]

Applying claim \ref{claim-Sch-from-open} for the open subset $U_{1}\times U_{2}\subset U_{1}\times\mathbb{R}^{n_{2}}$
we get that 
\[
\mathcal{S}(U_{1}\times\mathbb{R}^{n_{2}})/I_{Sch}^{U_{1}\times\mathbb{R}^{n_{2}}}((Id\times\varphi)(X_{1}))\cong\mathcal{S}(U_{1}\times U_{2})/I_{Sch}^{U_{1}\times U_{2}}((Id\times\varphi)(X_{1})),
\]

and thus we obtain
\[
\mathcal{S}(X_{1})\cong\mathcal{S}(U_{1}\times U_{2})/I_{Sch}^{U_{1}\times U_{2}}((Id\times\varphi)(X_{1})).
\]

Repeating the above construction using the following diagram:

\includegraphics{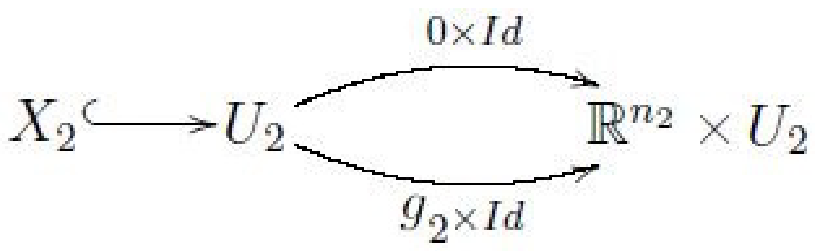}

yields:
\[
\mathcal{S}(X_{2})\cong\mathcal{S}(U_{1}\times U_{2})/I_{Sch}^{U_{1}\times U_{2}}((\varphi^{-1}\times Id)(X_{2})).
\]

Clearly $(Id\times\varphi)(X_{1})=(\varphi^{-1}\times Id)(X_{2})$,
and so $\mathcal{S}(X_{1})\cong\mathcal{S}(X_{2})$. Note that the
isomorphism constructed is in fact the pull back by $\varphi$ from
$\mathcal{S}(X_{2})$ onto $\mathcal{S}(X_{1})$. This proves the
lemma.
\end{proof}
\begin{rem}
A similar claim about tempered functions can be proved similarly,
by replacing Theorem \ref{AG-thm-red-to-cl-is-onto} with a similar
claim about tempered functions ({[}AG - 4.6.2{]}).
\begin{prop}
\label{prop-sXt}Let $X$ be an NQN set, $s\in\mathcal{S}\left(X\right)$,
and $t\in\mathcal{T}\left(X\right)$. Then $t\cdot s\in\mathcal{S}\left(X\right)$.
\end{prop}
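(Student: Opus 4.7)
The plan is to lift everything to an ambient Nash manifold and invoke the known result for Nash manifolds, namely Proposition \ref{AG-prop-ts-is-s}, which says a tempered function times a Schwartz function on a Nash manifold is again Schwartz.

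First, by the definition of $\mathcal{S}(X)$ as a quotient, the Schwartz function $s$ is the restriction of some $S \in \mathcal{S}(U)$, where $U \subset \mathbb{R}^n$ is an open semi-algebraic set in which $X$ is closed. Similarly, $t$ is the restriction of some tempered function $T' \in \mathcal{T}(V)$ for some open semi-algebraic $V \subset \mathbb{R}^n$ containing $X$ as a closed subset. A priori these ambient neighbourhoods are different, so I would first reconcile them using Claim \ref{claim-temp-from-open}: since $\mathcal{T}(U)|_X = \mathcal{T}(V)|_X$, I can replace $T'$ with some $T \in \mathcal{T}(U)$ satisfying $T|_X = t$.

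Now both representatives live on the same affine Nash manifold $U$ (which is a Nash manifold by Proposition \ref{AG-prop-sub-of-Nash-is-Nash}). Proposition \ref{AG-prop-ts-is-s} then gives $T \cdot S \in \mathcal{S}(U)$ directly. Passing to the quotient by $I_{Sch}^U(X)$, the class of $T \cdot S$ represents a Schwartz function on $X$, and by construction $(T \cdot S)|_X = t \cdot s$. Hence $t \cdot s \in \mathcal{S}(X)$, as claimed.

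There is essentially no obstacle here beyond bookkeeping: the only subtlety is the one just noted, namely that $t$ and $s$ may have been presented via different ambient neighbourhoods, which is resolved by Claim \ref{claim-temp-from-open}. Everything else is an immediate reduction to the Nash manifold case already established in \cite{AG}. One could also note that the argument shows the multiplication map $\mathcal{T}(X) \times \mathcal{S}(X) \to \mathcal{S}(X)$ is well defined on the quotients, since if $S \in I_{Sch}^U(X)$ then $T \cdot S \in I_{Sch}^U(X)$ as well, which ensures the construction does not depend on the chosen representative $S$.
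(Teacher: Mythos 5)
Your proof is correct and follows essentially the same route as the paper: choose a common open neighbourhood $U$ of $X$, lift $s$ and $t$ to $S\in\mathcal{S}(U)$ and $T\in\mathcal{T}(U)$, and apply Proposition \ref{AG-prop-ts-is-s}. The paper silently picks a single $U$ from the start, relying on Claims \ref{claim-Sch-from-open} and \ref{claim-temp-from-open} for independence of the ambient set; you have simply made that reconciliation step explicit, along with the (correct but unstated in the paper) observation that the product is well defined modulo $I_{Sch}^{U}(X)$.
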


\begin{proof}
Take some $U\subset\mathbb{R}^{n}$ such that $X\subset U$ as a closed
subset. Then there exist some Schwartz function $S\in\mathcal{S}\left(U\right)$
and a tempered function $T\in\mathcal{T}\left(U\right)$ such that
$s=S|_{X}$ and $t=T|_{X}$. As $T\cdot S\in\mathcal{S}\left(U\right)$
(by Proposition \ref{AG-prop-ts-is-s}), we get that $\left(T\cdot S\right)|_{X}\in\mathcal{S}\left(X\right)$.
\end{proof}
\end{rem}

$\:$

\subsection{Affine QN}
\begin{lem}
Let $X,\:Y$ be affine QN varieties, and $\varphi:X\to Y$ a QN isomorphism.
Let $\tilde{X},\:\tilde{Y}$ be closed NQN sets correponding to $X,\:Y$
respectively, and $\tilde{\varphi}$ the corresponding map. Then a
Schwartz function on $\tilde{Y}$ is pulled-back by $\tilde{\varphi}$
to a Schwartz function on $\tilde{X}$.
\end{lem}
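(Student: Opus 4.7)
The plan is to reduce to the setting of Lemma~\ref{lem-NQN-Frechet-iso} by passing to a finite open cover on which $\tilde\varphi$ restricts to NQN isomorphisms, decomposing $f$ accordingly by a Nash tempered partition of unity, pulling back each piece, and gluing the results by extension-by-zero on an ambient Nash manifold.

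First, I would write $f = F|_{\tilde Y}$ for some $F \in \mathcal{S}(V)$, where $V \subset \mathbb{R}^m$ is open semi-algebraic with $\tilde Y$ closed in $V$. Applying Lemma~\ref{lem-affine-QN-equiv} to both $\varphi$ and $\varphi^{-1}$ and taking a common refinement, I obtain matching finite open covers $\tilde X = \bigcup_{k=1}^N \tilde X_k$ and $\tilde Y = \bigcup_{k=1}^N \tilde Y_k$ with $\tilde\varphi(\tilde X_k) = \tilde Y_k$ and each $\tilde\varphi|_{\tilde X_k} : \tilde X_k \to \tilde Y_k$ an NQN isomorphism (both it and its inverse being NQN by construction). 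I then fix open semi-algebraic neighborhoods $W_k \subset \mathbb{R}^n$ and $V_k \subset V$ satisfying $\tilde X_k = \tilde X \cap W_k$ closed in $W_k$ and $\tilde Y_k = \tilde Y \cap V_k$ closed in $V_k$, and set $U := \bigcup_k W_k$, a Nash manifold with $\tilde X$ closed in $U$.

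Next I would apply the Nash tempered partition of unity (Theorem~\ref{AG-thm-part-of-unity}) to the cover $\{V \setminus \tilde Y,\, V_1, \ldots, V_N\}$ of $V$ with the multiplier property $\beta_k F \in \mathcal{S}(V_k)$. Since $\beta_0$ vanishes on $\tilde Y$, the functions $f_k := (\beta_k F)|_{\tilde Y_k} \in \mathcal{S}(\tilde Y_k)$ satisfy $f(y) = \sum_{k=1}^N (\beta_k F)(y)$ for $y \in \tilde Y$. For each $k$, Lemma~\ref{lem-NQN-Frechet-iso} applied to the NQN isomorphism $\tilde\varphi|_{\tilde X_k}$ produces a representative $G_k \in \mathcal{S}(W_k)$ of $f_k \circ \tilde\varphi|_{\tilde X_k}$; I then extend each $G_k$ by zero to $\tilde G_k \in \mathcal{S}(U)$ via Theorem~\ref{AG-thm-Char-Nash}, and form $G := \sum_k \tilde G_k \in \mathcal{S}(U)$.

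Finally I would verify that $G|_{\tilde X} = f \circ \tilde\varphi$: the bijectivity of $\tilde\varphi|_{\tilde X_k}$ onto $\tilde Y_k$ ensures that for every $x \in \tilde X$ and every $k$, the quantity $\tilde G_k(x)$ equals $(\beta_k F)(\tilde\varphi(x))$ --- both sides vanish when $x \notin \tilde X_k$ (since then also $\tilde\varphi(x) \notin \tilde Y_k$), and both equal $f_k(\tilde\varphi(x))$ when $x \in \tilde X_k$. Summing in $k$ recovers $f \circ \tilde\varphi$, and since $G \in \mathcal{S}(U)$ with $\tilde X$ closed in $U$, this yields $f \circ \tilde\varphi \in \mathcal{S}(\tilde X)$. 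The main obstacle I anticipate is the simultaneous refinement producing \emph{genuine} NQN isomorphisms $\tilde\varphi|_{\tilde X_k}$ rather than just NQN maps, since only then is Lemma~\ref{lem-NQN-Frechet-iso} applicable; everything else is careful bookkeeping using tools already established for Nash manifolds.
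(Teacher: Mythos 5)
Your proof is correct and follows essentially the same route as the paper: use Lemma \ref{lem-affine-QN-equiv} (plus common refinement) to obtain covers on which $\tilde\varphi$ restricts to NQN isomorphisms, decompose $f$ into pieces supported on the cover, pull each piece back via Lemma \ref{lem-NQN-Frechet-iso}, and reassemble by extension by zero. The only difference is that you spell out the decomposition step explicitly via the Nash tempered partition of unity on an ambient Nash manifold, whereas the paper invokes the cosheaf property of Schwartz functions (which rests on the same partition-of-unity argument) more tersely.
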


\begin{proof}
Let $f\in\mathcal{S}\left(\tilde{Y}\right)$. By Lemma \ref{lem-affine-QN-equiv}
there exist open covers $\bigcup\limits _{i=1}^{N}\tilde{X}_{i}=\tilde{X}$,
$\bigcup\limits _{i=1}^{N}\tilde{Y}_{i}=\tilde{Y}$ such that $\tilde{\varphi}|_{\tilde{X}_{i}}:\tilde{X}_{i}\xrightarrow{\sim}\tilde{Y}_{i}$
are NQN isomorphisms. As Schwartz functions form a cosheaf, every
Schwartz function on $\tilde{Y}$ is a sum of extensions of Schwartz
functions on open subsets of $\tilde{Y}$. Let those subsets be $\tilde{Y}_{i}$.
According to Lemma \ref{lem-NQN-Frechet-iso}, a Schwartz function
$s_{i}\in\mathcal{S}\left(\tilde{Y}_{i}\right)$ is pulled back to
$\varphi^{*}s_{i}\in\mathcal{S}\left(\tilde{X}_{i}\right)$, and thus,
we get $\varphi^{*}s=\sum\limits _{i=1}^{n}Ext_{\tilde{X}_{i}}^{\tilde{X}}\left(\varphi^{*}s_{i}\right)\in\mathcal{S}\left(\tilde{X}\right)$.
Together with the NQN isomorphism on each such subset, we get the
result.
\end{proof}
This lemma enables us to use the following definition:
\begin{defn}
Let $X$ be an affine QN variety. A function $s$ on $X$ is called
a \textit{Schwartz function on $X$} if it is a pullback of a Schwartz
function from the closed NQN set corresponds to $X$. I.e. if $\varphi:X\xrightarrow{\sim}\tilde{X}\subset\mathbb{R}^{n}$
where $\tilde{X}$ is the closed semi-algebraic set QN isomorphic
to $X$, and $\tilde{s}\in\mathcal{S}\left(\tilde{X}\right)$, then
$\varphi^{*}\tilde{s}\in\mathcal{S}\left(X\right)$.
\end{defn}

\begin{rem}
\textcolor{black}{\label{rem-NQN-Like-Sch-Tempered}Let $X$ be a
locally-closed subset in $\mathbb{R}^{n}$. Let $U\subset\mathbb{R}^{n}$
be an open neighborhood of $X$ such that $X$ is closed in $U$.
$U$ is an affine Nash manifold, so there is a Nash diffeomotphism
$\phi$ between $U$ and a closed Nash submanifold of $\mathbb{R}^{N}$.
Denote $\hat{X}:=\phi\left(X\right)$ and $\hat{U}:=\phi\left(U\right)$.
Nash diffeomorphism is a closed map, so $\hat{X}\subset\hat{U}$ is
a closed subset. Thus, we may define the Schwartz space $\mathcal{S}\left(\hat{X}\right):=\mathcal{S}\left(\mathbb{R}^{N}\right)|_{\hat{X}}=\left(\mathcal{S}\left(\mathbb{R}^{N}\right)|_{\hat{U}}\right)|_{\hat{X}}=\mathcal{S}\left(\hat{U}\right)|_{\hat{X}}$
. As Nash diffeomorphisms pull back Schwartz functions, the space
$\mathcal{S}\left(\hat{U}\right)$ is pulled back to $\mathcal{S}\left(U\right)$,
and $\mathcal{S}\left(\hat{X}\right)$ is pulled back to $\mathcal{S}\left(X\right)$
and vice versa. This enables us to understand a Schwartz function
on a loaclly-closed subset $X$ of $\mathbb{R}^{n}$ as a restriction
of a Schwartz function from an open set $U$ in which $X$ is closed.
I.e.: $\mathcal{S}\left(X\right)=\mathcal{S}\left(U\right)|_{X}$.}

A similar claim for tempered functions can be proven the same way.
\end{rem}

\begin{thm}
\label{thm-Res To Closed}Let $M$ be an affine QN variety, and let
$X\subset M$ be a closed subset. Then the restriction of a Schwartz
function from $M$ to $X$ defines an isomorphism $\mathcal{S}\left(X\right)=\mathcal{S}\left(M\right)/I_{Sch}^{M}\left(X\right)$
(with the quotient topology), where $I_{Sch}^{M}\left(X\right)$ is
the ideal in $\mathcal{S}\left(M\right)$ of functions identically
vanishing on $X$.
\end{thm}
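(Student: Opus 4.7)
The plan is to reduce to the NQN setting and then combine the third isomorphism theorem with Banach's open mapping theorem. Since $M$ is an affine QN variety, by definition there is a QN isomorphism $M \cong \tilde{M}$ for some closed NQN set $\tilde{M} \subset \mathbb{R}^n$; this is a homeomorphism, so the closed subset $X \subset M$ corresponds to $\tilde{X} \subset \tilde{M}$, which is closed in $\tilde{M}$ and therefore closed in $\mathbb{R}^n$. By the definition of Schwartz functions on affine QN varieties (together with the preceding pull-back lemma), I may replace $M$ and $X$ by $\tilde{M}$ and $\tilde{X}$ throughout, so that $\mathcal{S}(M) = \mathcal{S}(\mathbb{R}^n)/I_{Sch}^{\mathbb{R}^n}(M)$ and $\mathcal{S}(X) = \mathcal{S}(\mathbb{R}^n)/I_{Sch}^{\mathbb{R}^n}(X)$ (the latter using Claim \ref{claim-Sch-from-open} to choose $\mathbb{R}^n$ itself as the ambient open neighborhood of $X$).

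Next I would exploit $X \subset M$ to obtain $I_{Sch}^{\mathbb{R}^n}(M) \subset I_{Sch}^{\mathbb{R}^n}(X)$, so the restriction map $F \mapsto F|_X$ on $\mathcal{S}(\mathbb{R}^n)$ descends to a continuous linear map $\pi : \mathcal{S}(M) \to \mathcal{S}(X)$. Surjectivity of $\pi$ is immediate: any class in $\mathcal{S}(X)$ is represented by some $F \in \mathcal{S}(\mathbb{R}^n)$, whose image in $\mathcal{S}(M)$ is a preimage. A class $[F] \in \mathcal{S}(M)$ lies in $\ker \pi$ precisely when $F|_X = 0$, i.e.\ $F \in I_{Sch}^{\mathbb{R}^n}(X)$; unwinding the definitions, this kernel is exactly $I_{Sch}^M(X)$. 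Hence the induced map $\bar{\pi} : \mathcal{S}(M)/I_{Sch}^M(X) \to \mathcal{S}(X)$ is a continuous linear bijection.

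Finally I would upgrade this bijection to a Fr\'echet-space isomorphism. The ideal $I_{Sch}^M(X)$ is closed in $\mathcal{S}(M)$ as the intersection of the kernels of the continuous point-evaluation functionals $\phi \mapsto \phi(x)$ for $x \in X$, so $\mathcal{S}(M)/I_{Sch}^M(X)$ is Fr\'echet by Proposition \ref{prop-Closed-Frechet-subspace}; $\mathcal{S}(X)$ is Fr\'echet by Lemma \ref{lem-Sch-NQN-is-Frechet}. Theorem \ref{thm-Banach-open-mapping} then forces $\bar{\pi}$ to be a topological isomorphism. The main conceptual difficulty here, namely the existence of a Schwartz extension from the closed subset $X$ to $M$, does not actually have to be proved afresh: it is already absorbed into the definition of $\mathcal{S}(X)$ as a quotient of $\mathcal{S}(\mathbb{R}^n)$, so the one thing that genuinely needs care is verifying that the two quotient topologies match, which is precisely what Banach's open mapping theorem delivers.
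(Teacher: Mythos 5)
Your proposal is correct and takes essentially the same route as the paper: reduce to closed NQN representatives $\tilde X\subset\tilde M\subset\mathbb{R}^n$, note $I_{Sch}(\tilde M)\subset I_{Sch}(\tilde X)$, and apply the third isomorphism theorem $\bigl(\mathcal{S}(\mathbb{R}^n)/I_{Sch}(\tilde M)\bigr)/I_{Sch}^{\tilde M}(\tilde X)\cong\mathcal{S}(\mathbb{R}^n)/I_{Sch}(\tilde X)$, the paper taking the ambient open set to be some $U$ in which $\tilde M$ is closed while you take $U=\mathbb{R}^n$. The only cosmetic difference is that you spell out continuity, bijectivity and Banach's open mapping theorem explicitly, whereas the paper presents the same isomorphism as a chain of canonical quotient identifications.
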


\begin{proof}
Take the closed corresponding sets of $X$ and $M$, $\tilde{X}$
and $\tilde{M}$ correspondingly. As $\tilde{X}\subset\tilde{M}\subset U$
where $\tilde{M}$ is closed in some open $U\subset\mathbb{R}^{n}$,
we get that $\mathcal{S}\left(M\right)/I_{Sch}^{M}\left(X\right)\cong\left(\mathcal{S}\left(U\right)/I_{Sch}^{U}\left(M\right)\right)/I_{Sch}^{M}\left(X\right)\cong\mathcal{S}\left(U\right)/I_{Sch}^{U}\left(X\right)\cong\mathcal{S}\left(X\right)$.
\end{proof}
\begin{defn}
Let $X$ be an affine QN variety corresponding to some closed subset
$\tilde{X}\subset\mathbb{R}^{n}$. A function $f:\tilde{X}\to\mathbb{R}$
is called \textbf{flat} at $\tilde{p}\in\tilde{X}$ if there exists
an open semi-algebraic neighborhood $U\subset\mathbb{R}^{n}$ of $\tilde{p}$
and a function $F\in C^{\infty}\left(U\right)$ such that $f|_{U\cap\tilde{X}}=F|_{U\cap\tilde{X}}$
and $F$'s Taylor series is identically zero at $\tilde{p}$. If $f$
is flat at all $\tilde{p}\in\tilde{Z}$ for some $\tilde{Z}\subset\tilde{X}$,
$f$ is called \textbf{flat at $\tilde{Z}$}.
\end{defn}

\begin{claim}
Let $X$ and $Y$ be affine QN varieties, and $\varphi:X\to Y$ a
QN isomorphism. Let $\tilde{X}\subset\mathbb{R}^{n}$ and $\tilde{Y}\subset\mathbb{R}^{m}$
be the corresponding closed subsets, and $\tilde{\varphi}:\tilde{X}\to\tilde{Y}$
the corresponding map. Let $f$ be a function on $\tilde{Y}$ that
is flat at some $p\in\tilde{Y}$. Then $\tilde{\varphi}^{*}f$ is
flat at $\tilde{\varphi}^{-1}\left(p\right)$.
\end{claim}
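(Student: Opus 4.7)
The plan is to reduce the claim to the Faà di Bruno-type statement in Lemma \ref{lem-Faa-di} by producing, in a neighborhood of $q:=\tilde{\varphi}^{-1}(p)$, a smooth extension of $\tilde{\varphi}^{*}f$ of the form $F\circ g$, where $F$ is a smooth extension of $f$ flat at $p$ and $g$ is a Nash extension of $\tilde{\varphi}$.

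First, I would unpack flatness of $f$ at $p$: by definition there is an open semi-algebraic neighborhood $U\subset\mathbb{R}^{m}$ of $p$ and $F\in C^{\infty}(U)$ with $F|_{U\cap\tilde{Y}}=f|_{U\cap\tilde{Y}}$ and Taylor series of $F$ vanishing at $p$. Next, by Lemma \ref{lem-affine-QN-equiv} (applied to the QN morphism $\tilde{\varphi}$), there is an open semi-algebraic cover of $\tilde{X}$ on each piece of which $\tilde{\varphi}$ is the restriction of a Nash map from an open semi-algebraic neighborhood. Pick such a piece containing $q$: one obtains an open semi-algebraic neighborhood $V_{0}\subset\mathbb{R}^{n}$ of $q$ in which $\tilde{X}$ is closed, and a Nash map $g:V_{0}\to\mathbb{R}^{m}$ with $g|_{V_{0}\cap\tilde{X}}=\tilde{\varphi}|_{V_{0}\cap\tilde{X}}$. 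Since $g$ is continuous and $g(q)=p\in U$, I would shrink $V_{0}$ to an open semi-algebraic subneighborhood $V\subset V_{0}$ of $q$ with $g(V)\subset U$.

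I would then set $G:=F\circ g\in C^{\infty}(V)$. For $x\in V\cap\tilde{X}$ one has $g(x)=\tilde{\varphi}(x)\in\tilde{Y}\cap U$, so
\[
G(x)=F(g(x))=F(\tilde{\varphi}(x))=f(\tilde{\varphi}(x))=(\tilde{\varphi}^{*}f)(x),
\]
i.e. $G$ is a smooth extension of $\tilde{\varphi}^{*}f$ to the open semi-algebraic neighborhood $V$ of $q$. To conclude flatness it remains to check that the Taylor series of $G$ at $q$ vanishes. Since the Taylor series of $F$ at $p=g(q)$ is identically zero, $F$ is $k$-flat at $p$ for every $k$, and Lemma \ref{lem-Faa-di} applied to $F$ and the smooth (Nash) map $g$ yields that $G=F\circ g$ is $k$-flat at $q$ for every $k$; hence its Taylor series at $q$ is zero. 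This establishes flatness of $\tilde{\varphi}^{*}f$ at $q$, and running the argument at every point of $\tilde{\varphi}^{-1}(p)$ (a single point, but the wording allows for sets as well) finishes the proof.

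The only mildly delicate step is the first one, namely producing the semi-algebraic neighborhood $V$ of $q$ and the Nash extension $g$ of $\tilde{\varphi}$ there; but this is exactly the content of Lemma \ref{lem-affine-QN-equiv}, so the argument is essentially a clean chain-rule calculation once the geometric setup is in place.
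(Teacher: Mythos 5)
Your proof is correct and follows essentially the same route as the paper's: unpack flatness of $f$ via a smooth extension $F$ on a neighborhood $U$ of $p$, use Lemma \ref{lem-affine-QN-equiv} to obtain a Nash extension $g$ of $\tilde{\varphi}$ near $q=\tilde{\varphi}^{-1}(p)$, shrink so that $g$ maps into $U$, and conclude by applying Lemma \ref{lem-Faa-di} to $F\circ g$. Your write-up is a bit more careful than the paper's (which has a typo conflating $p$ and $q$ when shrinking the neighborhood), but the argument is the same.
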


\begin{proof}
By definition there is some open neighborhood $U\subset\mathbb{R}^{m}$
of $p$ and a smooth function $F\in C^{\infty}\left(U\right)$ which
is flat on $p$ and such that $f=F|_{\tilde{Y}\cap U}$. As $\tilde{\varphi}$
is an isomorphism, and by Lemma \ref{lem-affine-QN-equiv}, $\tilde{\varphi}^{-1}\left(p\right)$
has an open neighborhood $W\subset\mathbb{R}^{n}$ such that $\tilde{\varphi}|_{\tilde{X}\cap W}$
is a restriction of a Nash map $G:W\to\mathbb{R}^{m}$. Take an open
subset $W'\subset W$ such that $p\in W'$ and $G\left(W'\right)\subset U$.
By sheaf properties of Nash maps, $G|_{W'}$ is Nash as well. Thus,
$F\circ G|_{W'}$ is a smooth function on $W'$, and by {[}Lemma \ref{lem-Faa-di}{]}
it is flat on $\tilde{\varphi}^{-1}\left(p\right)$. We conclude that
$F\circ G|_{\tilde{X}\cap W'}=F\circ\tilde{\varphi}|_{\tilde{X}\cap W'}=f\circ\tilde{\varphi}|_{\tilde{X}\cap W'}$,
which means $\tilde{\varphi}^{*}f$ is flat at $\tilde{\varphi}^{-1}\left(p\right)$.
\end{proof}
\begin{defn}
Let $X$ be an affine QN variety. A function $f:X\to\mathbb{R}$ is
called flat at $p\in X$ if the correponding map $\tilde{f}$ is flat
at the corresponding point $\tilde{p}$. If $f$ is flat at all $p\in Z$
for some $Z\subset X$, $f$ is called \textbf{flat at $Z$}.
\end{defn}

\begin{lem}
\label{lem-ext-by-0-affine}(Extension by zero - the affine case)
Let $X$ be an affine QN variety, and let $V\subset X$ be an open
subset. Then any $f\in\mathcal{S}\left(V\right)$ can be extended
to a Schwartz function on $X$ which is flat on $X\backslash V$.
\end{lem}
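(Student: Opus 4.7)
The plan is to reduce the statement to the already-known characterization of Schwartz functions on open subsets of Nash manifolds (Theorem~\ref{AG-thm-Char-Nash}), applied with the ambient Nash manifold taken to be~$\mathbb{R}^{n}$.

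First I would set up the geometric picture. By Remark~\ref{Rem-NQN-Like} we may assume that $X$ is identified with a closed semi-algebraic subset $\tilde{X}\subset\mathbb{R}^{n}$, and that the open subset $V\subset X$ corresponds to $\tilde{V}=W\cap\tilde{X}$ for some open semi-algebraic $W\subset\mathbb{R}^{n}$. Then $\tilde{V}$ is closed in~$W$, so by the definition of Schwartz functions on an NQN set together with Claim~\ref{claim-Sch-from-open}, we have $\mathcal{S}(V)\cong\mathcal{S}(\tilde{V})=\mathcal{S}(W)/I_{Sch}^{W}(\tilde{V})$. Lift the given $f\in\mathcal{S}(V)$ to some $F\in\mathcal{S}(W)$.

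Second, I would extend by zero from $W$ to all of~$\mathbb{R}^{n}$. Since $\mathbb{R}^{n}$ is a Nash manifold and $W$ is an open (semi-algebraic) subset of it, Theorem~\ref{AG-thm-Char-Nash} identifies $\mathcal{S}(W)$ with the closed subspace of $\mathcal{S}(\mathbb{R}^{n})$ consisting of functions that vanish with all their derivatives on $\mathbb{R}^{n}\setminus W$. Denote by $\tilde{F}\in\mathcal{S}(\mathbb{R}^{n})$ the extension of $F$ by zero; then $\tilde{F}\in C^{\infty}(\mathbb{R}^{n})$ has identically vanishing Taylor series at every point of the closed set $\mathbb{R}^{n}\setminus W$.

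Third, set $g:=\tilde{F}|_{\tilde{X}}$. Since $\tilde{X}$ is closed in~$\mathbb{R}^{n}$, we have $g\in\mathcal{S}(\tilde{X})=\mathcal{S}(X)$ by Remark~\ref{rem-NQN-Like-Sch-Tempered}. On $\tilde{V}=W\cap\tilde{X}$ we have $g|_{\tilde{V}}=\tilde{F}|_{\tilde{V}}=F|_{\tilde{V}}=f$, so $g$ extends~$f$. On $\tilde{X}\setminus\tilde{V}=\tilde{X}\cap(\mathbb{R}^{n}\setminus W)$, the smooth function $\tilde{F}$ itself provides, at each point $p$, an open neighborhood in~$\mathbb{R}^{n}$ and a smooth function agreeing with $g$ on $\tilde{X}$ whose Taylor series at $p$ is zero; by the definition of flatness on an affine QN variety this means $g$ is flat on $X\setminus V$. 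Transferring through the QN isomorphism $X\cong\tilde{X}$ produces the desired extension.

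The main thing to be careful about is the bookkeeping of ambient open neighborhoods in which the various objects sit as closed subsets, so that the definitions of Schwartz function and of flatness line up consistently. Taking the single ambient Nash manifold to be $\mathbb{R}^{n}$ (permissible because $\tilde{X}$ is closed in $\mathbb{R}^{n}$) eliminates this juggling, after which the statement reduces directly to Theorem~\ref{AG-thm-Char-Nash} and requires no further work.
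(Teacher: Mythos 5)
Your proof is correct and takes essentially the same route as the paper's: identify $X$ with a closed subset $\tilde{X}\subset\mathbb{R}^{n}$, realize $f$ as the restriction of a Schwartz function $F$ on an open semi-algebraic $W\subset\mathbb{R}^{n}$ in which $\tilde{V}$ is closed (Remark~\ref{rem-NQN-Like-Sch-Tempered}), extend $F$ by zero to $\mathbb{R}^{n}$ via Theorem~\ref{AG-thm-Char-Nash} to obtain a Schwartz function flat on $\mathbb{R}^{n}\setminus W$, and restrict back to $\tilde{X}$. The paper's version is terser but uses the same references and the same three steps; your extra care with the flatness definition at points of $\tilde{X}\setminus\tilde{V}$ is a sound elaboration rather than a departure.
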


\begin{proof}
First, let us take the closed set in $\mathbb{R}^{n}$ corresponding
to $X$, and denote it, by abuse of notation, by $X$. By remark \ref{rem-NQN-Like-Sch-Tempered},
$f$ is a restriction of a Schwartz function $\hat{f}$ on some open
neighborhood $\hat{V}\subset\mathbb{R}^{n}$. By Theorem \ref{AG-thm-Char-Nash}
$\hat{f}$ may be extended by zero to a Schwartz function on $\mathbb{R}^{n}$
which is flat on $\mathbb{R}^{n}\backslash\hat{V}$. Restricting this
function to $X$ yields the desired result. 
\end{proof}
In fact, for any open $V\subset X$, any restriction to $V$ of a
Schwartz function on $X$ which is flat on $X\backslash V$, is a
Schwartz function on $V$. In order to prove that, we need the following
lemmas.

The following lemmas and proposition are required for the proof of
Theorem \ref{thm-affine-char}:
\begin{lem}
\label{lem-subanalytic-ext}Let $X$ be a QN variety corresponding
to some compact set $\tilde{X}\subset\mathbb{R}^{n}$ closed in some
$U\subset\mathbb{R}^{n}$. Let $Z\subset\tilde{X}$ be a closed subset.
Define $V:=\tilde{X}\backslash Z$, 
\[
W_{Z}:=\left\{ \phi:\tilde{X}\to\mathbb{R}|\exists\hat{\phi}\in C^{\infty}\left(U\right)\text{ such that }\hat{\phi}|_{\tilde{X}}=\phi\text{ and }\phi\text{ is flat at }Z\right\} ,
\]
 and 
\[
\left(W_{Z}^{U}\right)^{comp}:=\left\{ \phi\in C^{\infty}\left(U\right)|\phi\text{ is compactly supported and is flat at }Z\right\} .
\]
Then, for any $f\in W_{Z}$ there exists $\hat{f}\in\left(W_{Z}^{U}\right)^{comp}$
such that $\hat{f}|_{X}=f$.
\end{lem}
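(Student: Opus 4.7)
The strategy is in two steps: (i) construct a smooth extension of $f$ to all of $U$ that is flat at every point of $Z$ in the classical sense (i.e.\ with vanishing Taylor series at each such point), and then (ii) truncate by a compactly supported bump function to obtain the desired $\hat f$.

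For step (i), I would start from the smooth extension $\hat\phi\in C^{\infty}(U)$ provided by the very definition of $f\in W_{Z}$, which already satisfies $\hat\phi|_{\tilde X}=f$. A priori, however, $\hat\phi$ is only guaranteed to be flat at $Z$ in the weak sense coming from our QN definition of flatness: for each $p\in Z$ there exists \emph{some} local smooth extension flat at $p$, but nothing says that a single global extension is classically flat at every point of $Z$ simultaneously. To upgrade the weak flatness to genuine classical flatness of a global extension, I would invoke the Whitney-type extension theorems for subanalytic sets of Bierstone--Milman--Pawlucki cited in the introduction ([BM1], [BM2], [BMP1], [BMP2]). Since $\tilde X$ is compact and semi-algebraic (hence subanalytic), $Z\subset\tilde X$ is closed subanalytic, and the pointwise QN-flatness of $f$ on $Z$ provides the vanishing Whitney field on $Z$, these results produce a single $\widetilde F\in C^{\infty}(U)$ with $\widetilde F|_{\tilde X}=f$ and all partial derivatives of $\widetilde F$ vanishing identically on $Z$. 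I expect this to be the principal obstacle of the proof: the passage from the pointwise notion of flatness used in the paper to a global classically flat extension on an open neighborhood is precisely what requires the deeper subanalytic geometry, and the equivalence of definitions of flatness noted in [BMP2] should enter here in an essential way.

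For step (ii), since $\tilde X$ is compact and closed in $U$, standard smooth partition-of-unity arguments on $\mathbb{R}^{n}$ give a cutoff $\chi\in C_{c}^{\infty}(U)$ with $\chi\equiv 1$ on some open neighborhood of $\tilde X$. Set $\hat f:=\chi\cdot\widetilde F$. Then $\hat f\in C_{c}^{\infty}(U)$, and $\hat f|_{\tilde X}=\widetilde F|_{\tilde X}=f$. For any $p\in Z$, all partial derivatives of $\widetilde F$ vanish at $p$, so by Leibniz's rule all partial derivatives of $\hat f=\chi\widetilde F$ vanish at $p$ as well; equivalently, $\hat f$ is flat at $Z$ in the classical (and hence QN) sense. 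This places $\hat f\in(W_{Z}^{U})^{\mathrm{comp}}$ with $\hat f|_{\tilde X}=f$, proving the lemma.
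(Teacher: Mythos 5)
Your proposal is correct and matches the paper's approach, which is to delegate the substantive work to the same Bierstone--Milman--Pawlucki machinery and then truncate by a cutoff. The paper's ``proof'' is simply a pointer to [ES, Lemma 3.13 and Appendix A] with a short list of substitutions (semi-algebraic for algebraic, affine QN variety for affine algebraic variety, the more general uniformization theorem [BM1, Theorem 0.1] for [BM1, 5.1], etc.); the content of that argument is exactly what you describe in step (i) -- the pointwise QN flatness on $Z$ only gives, for each $p\in Z$, a \emph{different} local extension flat at $p$, and it is the subanalytic Whitney-field/uniformization results of [BM1, BM2, BMP1, BMP2] that produce a single $\widetilde F\in C^\infty(U)$ restricting to $f$ on $\tilde X$ and classically flat on all of $Z$. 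You correctly flag this as the principal obstacle (and note, as the paper does, that the case $Z=\{p\}$ is trivial by a bump-function patch). Step (ii), multiplying by $\chi\in C_c^\infty(U)$ with $\chi\equiv 1$ near the compact set $\tilde X$, is exactly how compact support is obtained; the Leibniz remark is fine though superfluous since $\hat f=\widetilde F$ near $Z$. The one caveat is that your invocation of the BM/BMP results as a single ready-made theorem glosses over the uniformization and Whitney-field bookkeeping that the [ES] argument actually carries out, but since the paper itself treats that step as a black-box citation, your blind reconstruction is at the same level of detail and faithful to the intended route.
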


\begin{proof}
The proof of Lemma \ref{lem-subanalytic-ext} is exactly the same
as the proof of {[}ES - Lemma 3.13{]}, which deals with algebraic
varieties. In the case of $Z=\left\{ p\right\} $, this extension
is trivial. In the case $Z$ consists of more than one point, we need
to use results on Whitney's extension theorem. We used {[}BM1{]},
{[}BM2{]}, {[}BMP1{]}, dealing with subanalytic geometry, to prove
this extension exists in the algebraic case in {[}ES - Lemma 3.13
and Appendix A{]}. As subanalytic geometry covers semi-algebraic sets
as well, the proof is valid in our case with these minor changes:\\
Semi-algebraic sets replace algebraic sets, affine QN varieties replace
affine algebraic varieties, and open\textbackslash{}closed semi-algebraic
sets replace Zariski open\textbackslash{}closed sets.\\
Restricting a Schwartz function to an affine QN variety from an open
neighborhood is given in Remark \ref{rem-NQN-Like-Sch-Tempered},
which replaces {[}ES - Theorem 3.7{]}.\\
The Uniformization theorem - {[}BM1, 5.1{]} should be replaced by
the more general Theorem {[}BM1, Theorem 0.1{]}.
\end{proof}
\begin{lem}
\label{lem-Sch-are-smth-on-comp}Let $X$ be an affine QN variety
corresponding to some compact set $\tilde{X}\subset\mathbb{R}^{n}$.
Let $U\subset\mathbb{R}^{n}$ such that $\tilde{X}$ is closed in
$U$. Then 
\[
\mathcal{S}\left(\tilde{X}\right)=\left\{ f:\tilde{X}\to\mathbb{R}|\exists\hat{f}\in C^{\infty}\left(U\right)\text{ such that }\hat{f}|_{\tilde{X}}=f\right\} 
\]
\end{lem}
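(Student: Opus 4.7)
The plan is to prove the two inclusions separately. The inclusion $\subseteq$ is essentially by definition: since $\tilde{X}$ is closed in $U$ and $U$ is an open semi-algebraic neighborhood of $\tilde{X}$, Claim \ref{claim-Sch-from-open} gives $\mathcal{S}(\tilde{X}) \cong \mathcal{S}(U)/I_{Sch}^{U}(\tilde{X})$. Any representative in $\mathcal{S}(U)$ is in particular a smooth function on $U$, so any $f \in \mathcal{S}(\tilde{X})$ extends to some $\hat{f} \in C^\infty(U)$.

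For the reverse inclusion $\supseteq$, I would start with an arbitrary $\hat{f} \in C^\infty(U)$ with $\hat{f}|_{\tilde{X}} = f$ and aim to produce an element of $\mathcal{S}(U)$ that still restricts to $f$ on $\tilde{X}$. The idea is to multiply $\hat{f}$ by a tempered cutoff function that equals $1$ on $\tilde{X}$ and vanishes outside a compact neighborhood of $\tilde{X}$ in $U$. Using compactness of $\tilde{X}$ I can choose semi-algebraic open sets $V_1 \subset U$ containing $\tilde{X}$ whose closure $\overline{V_1}$ is compact and contained in $U$, together with a strictly smaller semi-algebraic open neighborhood $V_1' \subset V_1$ of $\tilde{X}$. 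Setting $V_2 := U \setminus \overline{V_1'}$ gives a finite open semi-algebraic cover $\{V_1, V_2\}$ of $U$ with $\tilde{X} \cap V_2 = \emptyset$.

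Applying the tempered partition of unity (Theorem \ref{AG-thm-part-of-unity}) on the Nash manifold $U$ to this cover yields tempered functions $\beta_1, \beta_2$ on $U$ with $\mathrm{supp}(\beta_i) \subset V_i$ and $\beta_1 + \beta_2 = 1$. Since $\mathrm{supp}(\beta_2) \subset V_2$ is disjoint from $\tilde{X}$, we get $\beta_1|_{\tilde{X}} \equiv 1$. The product $g := \beta_1 \hat{f}$ is smooth on $U$ with support contained in $\mathrm{supp}(\beta_1) \subset \overline{V_1}$, which is compact in $U$. A smooth, compactly supported function on a Nash manifold is automatically Schwartz (every Nash differential operator applied to it yields a continuous function with the same compact support, hence bounded), so $g \in \mathcal{S}(U)$. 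Finally, $g|_{\tilde{X}} = \beta_1|_{\tilde{X}} \cdot \hat{f}|_{\tilde{X}} = f$, which exhibits $f$ as the restriction of a Schwartz function on $U$, i.e. $f \in \mathcal{S}(\tilde{X})$.

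The main obstacle is producing the tempered cutoff from the cover while staying in the semi-algebraic world: one must verify that compact sub-neighborhoods of $\tilde{X}$ inside $U$ can be chosen semi-algebraically so that the tempered partition of unity is applicable, and then check that compactly supported smooth functions on a Nash manifold lie in $\mathcal{S}(U)$. Both points are routine once the compactness of $\tilde{X}$ is used essentially; there is no subanalytic machinery needed here, unlike in the flat-extension situation of Lemma \ref{lem-subanalytic-ext}.
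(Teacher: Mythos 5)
Your proof is correct and uses the same core idea as the paper: extend $f$ to a smooth $\hat{f}$ on $U$, multiply by a compactly supported cutoff $\rho$ that is identically $1$ on $\tilde{X}$, and observe that a smooth compactly supported function on $U$ is automatically Schwartz (so $\rho\hat{f}\in\mathcal{S}(U)$ restricts to $f$). The difference is in how you produce the cutoff. The paper simply takes \emph{any} compactly supported $\rho\in C^\infty(U)$ with $\rho|_{\tilde{X}}=1$ (a standard bump function, which exists because $\tilde{X}$ is compact in the open set $U$), while you construct a \emph{tempered, semi-algebraic} cutoff via Proposition~\ref{prop-proper-ref}-style shrinking and the tempered partition of unity of Theorem~\ref{AG-thm-part-of-unity}. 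Your extra work is not wrong, but it is unnecessary: membership in $\mathcal{S}(U)$ only requires that all Nash differential operators applied to the function be bounded, and that holds for \emph{any} smooth compactly supported function, semi-algebraic or not. There is no requirement that the cutoff itself be tempered or semi-algebraic, so the ``main obstacle'' you flag at the end is not actually an obstacle; the paper sidesteps it entirely. Your observation that compactly supported smooth functions on a Nash manifold lie in the Schwartz space is exactly the point the paper uses implicitly, and your justification of it (bounded because continuous on a compact support) is correct.
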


\begin{proof}
The inclusion $\subset$ is trivial as $\mathcal{S}\left(U\right)\subset C^{\infty}\left(U\right)$.
For the other direction consider some $g:\tilde{X}\to\mathbb{R}$
and assume it extends to some $\hat{g}\in C^{\infty}\left(U\right)$
such that $\hat{g}|_{\tilde{X}}=g$. Let $\rho\in C^{\infty}\left(U\right)$
be a compactly supported function such that $\rho|_{\tilde{X}}=1$.
Then $\rho\cdot\hat{g}$ is a smooth compactly supported function
on $U$, so $\rho\cdot\hat{g}\in\mathcal{S}\left(U\right)$. Moreover,
$\left(\rho\cdot\hat{g}\right)|_{\tilde{X}}=\hat{g}|_{\tilde{X}}=g$.
Thus $g\in\mathcal{S}\left(\tilde{X}\right)$.
\end{proof}
\begin{prop}
\label{prop-res-Wz-to-U-is-Sch}Let $X$ be an affine QN variety corresponding
to some closed $\tilde{X}$, and let $Z\subset\tilde{X}$ be some
closed subset. Define $V:=\tilde{X}\backslash Z$ and 
\[
W_{Z}:=\left\{ \phi\in\mathcal{S}\left(\tilde{X}\right)|\phi\text{ is flat on }Z\right\} .
\]
Then restriction from $\tilde{X}$ to $V$ of a function in $W_{Z}$
is a Schwartz function on $V$, i.e. $Res_{V}^{\tilde{X}}\left(W_{Z}\right)\subset\mathcal{S}\left(V\right)$.
\end{prop}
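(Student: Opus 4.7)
The plan is to reduce the statement to Theorem \ref{AG-thm-Char-Nash} on the ambient Nash manifold by producing a Schwartz extension of $\phi$ to the ambient open set that is flat on $Z$ in the Euclidean sense. I would fix an open semi-algebraic $U\subset\mathbb{R}^{n}$ in which $\tilde{X}$ is closed, so that $U$ is an affine Nash manifold and $\mathcal{S}(\tilde{X})=\mathcal{S}(U)/I_{Sch}^{U}(\tilde{X})$, and then build $\Psi\in\mathcal{S}(U)$ with $\Psi|_{\tilde{X}}=\phi$ such that $\Psi$ is flat at every point of $Z$ as a smooth function on $U$. Given such $\Psi$, the set $Z$ is closed semi-algebraic in the Nash manifold $U$, so Theorem \ref{AG-thm-Char-Nash} yields $\Psi|_{U\setminus Z}\in\mathcal{S}(U\setminus Z)$. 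Since $V=\tilde{X}\setminus Z=\tilde{X}\cap(U\setminus Z)$ is closed in the open Nash submanifold $U\setminus Z$, Remark \ref{rem-NQN-Like-Sch-Tempered} identifies $\mathcal{S}(V)$ with the restrictions of elements of $\mathcal{S}(U\setminus Z)$ to $V$, whence $\phi|_{V}=\Psi|_{V}\in\mathcal{S}(V)$, which is the desired conclusion.

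The substantive step is the construction of $\Psi$. When $\tilde{X}$ is compact this is precisely Lemma \ref{lem-subanalytic-ext}, which uses the subanalytic Whitney-type machinery alluded to in the cited [BM1, BM2, BMP1] results to convert the pointwise flatness of $\phi$ along $Z$ into a globally defined, compactly supported smooth function on $U$ that restricts to $\phi$ on $\tilde{X}$ and is flat on all of $Z$; any such compactly supported smooth function automatically lies in $\mathcal{S}(U)$. For a general closed $\tilde{X}$ I would reduce to the compact case by a one-point compactification of $\tilde{X}$ (via Proposition \ref{prop-Alexandrov} applied to a Zariski closure, or a QN analog), exploiting that $\phi$ is Schwartz and hence can be extended by zero flatly across the added point, then applying the compact case of Lemma \ref{lem-subanalytic-ext} there and restricting back to $U$. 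Alternatively, one can localize: pick a representative $\Phi\in\mathcal{S}(U)$ of $\phi$, use a tempered partition of unity on $U$ (Theorem \ref{AG-thm-part-of-unity}) to decompose $\Phi$ into pieces supported in precompact semi-algebraic opens, apply Lemma \ref{lem-subanalytic-ext} to each piece (whose relevant intersection with $\tilde{X}$ is compact), and sum. Multiplication by a smooth cutoff preserves pointwise flatness on $Z$, and Proposition \ref{AG-prop-ts-is-s} ensures the pieces remain Schwartz, so the sum $\Psi$ lies in $\mathcal{S}(U)$.

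I expect the main obstacle to be precisely this globalization of the subanalytic Whitney extension from the compact case of Lemma \ref{lem-subanalytic-ext} to the general closed setting. Local flatness at each point of $Z$ is immediate from the definition of $W_{Z}$, but assembling local smooth flat extensions into a single global smooth extension on $U$ that is flat on \emph{all} of $Z$ and retains the Schwartz decay of $\phi$ at infinity is the non-trivial analytic input; once $\Psi$ is in hand, the remainder of the argument collapses to the formal pipeline above via Theorem \ref{AG-thm-Char-Nash} and Remark \ref{rem-NQN-Like-Sch-Tempered}.
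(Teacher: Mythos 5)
Your primary plan matches the paper's proof: the compact case is exactly Lemma \ref{lem-subanalytic-ext} plus the extension/restriction pipeline through Theorem \ref{AG-thm-Char-Nash} and Remark \ref{rem-NQN-Like-Sch-Tempered}, and the reduction of the general case via one-point compactification and flat extension by zero (Lemma \ref{lem-ext-by-0-affine}) is precisely how the paper proceeds. Two small remarks: the paper compactifies $\mathbb{R}^{n}$ itself (which is algebraic, so Proposition \ref{prop-Alexandrov} applies directly) rather than attempting to compactify $\tilde{X}$ or its Zariski closure, which avoids the ambiguity you flag; and your alternative "localize with a tempered partition of unity into precompact pieces" cannot work as stated, since in the restricted topology only \emph{finite} covers are allowed and $\mathbb{R}^{n}$ admits no finite cover by precompact semi-algebraic opens.
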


\begin{proof}
The proof is divided into two parts. First we show the case where
$X$ corresponds to a compact subset $\tilde{X}\subset\mathbb{R}^{n}$.
Then, we deduce the general case.

Assume $\tilde{X}$ is compact. Define $V^{\mathbb{R}^{n}}:=\mathbb{R}^{n}\backslash Z$
and 
\[
W_{Z}^{\mathbb{R}^{n}}:=\left\{ \phi\in\mathcal{S}\left(\mathbb{R}^{n}\right)|\phi\text{ is flat on }Z\right\} .
\]
As $Z$ is closed in $\mathbb{R}^{n}$, $V^{\mathbb{R}^{n}}$ is open.
As $V=V^{\mathbb{R}^{n}}\cap X$, $V$ is closed in $V^{\mathbb{R}^{n}}$.
The claim follows from the existence of these three maps:
\[
\xymatrix{ & W_{Z}^{\mathbb{R}^{n}}\ar@{->>}[dl]_{Res_{X}^{\mathbb{R}^{n}}}^{\left(1\right)}\ar[dr]_{\left(2\right)}^{Res_{U^{\mathbb{R}^{n}}}^{\mathbb{R}^{n}}}\\
W_{Z}\ar@{-->}[dr]_{Res_{U}^{X}} &  & \mathcal{S}\left(U^{\mathbb{R}^{n}}\right)\ar[dl]_{\left(3\right)}^{Res_{U}^{U^{\mathbb{R}^{n}}}}\\
 & \mathcal{S}\left(U\right)
}
\]

The existence of map (1) is clear. It is onto due to Lemma \ref{lem-subanalytic-ext}
and Lemma \ref{lem-Sch-are-smth-on-comp}. Let $g\in W_{Z}^{\mathbb{R}^{n}}$.
Then we get map (2) by Theorem \ref{AG-thm-Char-Nash}, $g|_{V^{\mathbb{R}^{n}}}\in\mathcal{S}\left(V^{\mathbb{R}^{n}}\right)$.
Map (3) is obtained as for any $h\in\mathcal{S}\left(V^{\mathbb{R}^{n}}\right)$
we get $h|_{V}\in\mathcal{S}\left(V\right)$ by Remark \ref{rem-NQN-Like-Sch-Tempered}. 

Now assume $\tilde{X}$ is not compact. But $\tilde{X}$ is closed
in $\mathbb{R}^{n}$. By Proposition \ref{prop-Alexandrov} we get
an algebraic map $i:\mathbb{R}^{n}\to\dot{\mathbb{R}^{n}}$ where
$\dot{\mathbb{R}^{n}}$ is an affine algebaic variety which is a one
point compactification of $\mathbb{R}^{n}$, i.e. $\dot{\mathbb{R}^{n}}=i\left(\mathbb{R}^{n}\right)\cup\left\{ \infty\right\} $.
We also get that $\mathbb{R}^{n}$ and $i\left(\mathbb{R}^{n}\right)$
are algebraically isomorphic, which means they are also QN isomorphic.
Thus, $\tilde{X}$ is QN isomorphic to some locally closed subset
$i\left(\tilde{X}\right)$ of the compact variety $\dot{\mathbb{R}^{n}}$.
As $i\left(\tilde{X}\right)\cup\left\{ \infty\right\} =:\dot{\tilde{X}}$
is closed in $\dot{\mathbb{R}^{n}}$, it is compact. Now take some
$f\in W_{Z}\subset\mathcal{S}\left(\tilde{X}\right)$, and get that
$i_{*}f:=f\circ i^{-1}\in\mathcal{S}\left(i\left(\tilde{X}\right)\right)$.
By Lemma \ref{lem-ext-by-0-affine}, as $i\left(\tilde{X}\right)$
is open in $\dot{\tilde{X}}$, there exist $\dot{f}\in\mathcal{S}\left(\dot{\tilde{X}}\right)$
such that $i_{*}f=\dot{f}|_{i\left(\tilde{X}\right)}$ . Now define
$\dot{V}:=\dot{\tilde{X}}\backslash\left(i\left(Z\right)\cup\left\{ \infty\right\} \right)$.
As $i$ is a QN isomorphism, $\dot{V}$ is open in $\dot{\tilde{X}}$.
By the compact case, $Res_{\dot{V}}^{\dot{\tilde{X}}}\left(\dot{f}\right)\in\mathcal{S}\left(\dot{V}\right)$.
Note that $\dot{V}$ is QN isomorphic to $V$ by $i^{-1}|_{\dot{V}}$
. Thus, $\left(i^{-1}|_{\dot{V}}\right)_{*}Res_{\dot{V}}^{\dot{\tilde{X}}}\left(\dot{f}\right)\in\mathcal{S}\left(V\right)$.
Finally, $\left(i^{-1}|_{\dot{V}}\right)_{*}Res_{\dot{V}}^{\dot{\tilde{X}}}\left(\dot{f}\right)=\left(i^{-1}|_{\dot{V}}\right)_{*}\left(\left(i_{*}f\right)|_{\dot{V}}\right)=f|_{V}$
and thus $f|_{V}\in\mathcal{S}\left(V\right)$.
\end{proof}
\begin{thm}
\label{thm-affine-char}(Characterization of Schwartz functions on
open subset - the affine case) Let $X$ be an affine QN variety, and
let $Z\subset X$ be some closed semi-algebraic subset. Define $V:=X\backslash Z$
and $W_{Z}:=\left\{ \phi\in\mathcal{S}\left(X\right)|\phi\text{ is flat on }Z\right\} $.
Then extension by zero $Ext_{V}^{X}:\mathcal{S}\left(V\right)\to W_{Z}$
is an isomorphism of Fréchet spaces, whose inverse is $Res_{V}^{X}:W_{Z}\to\mathcal{S}\left(V\right)$.
\end{thm}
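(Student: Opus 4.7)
The plan is to construct the two maps, verify they are set-theoretic inverses, establish the needed continuity and closedness, and then invoke Banach's open mapping theorem. Throughout I identify $X$ with a closed subset of an open semi-algebraic $U\subset\mathbb{R}^n$ and set $\hat V := U\setminus Z$, so that $\hat V\cap X = V$; by Remark \ref{rem-NQN-Like-Sch-Tempered} together with Theorem \ref{thm-Res To Closed} this realizes $\mathcal{S}(X)\cong\mathcal{S}(U)/I_{Sch}^{U}(X)$ and $\mathcal{S}(V)\cong\mathcal{S}(\hat V)/I_{Sch}^{\hat V}(V)$ as Fr\'echet spaces. The map $\alpha := Ext_V^X$ is well-defined with image contained in $W_Z$ by Lemma \ref{lem-ext-by-0-affine}, and $\beta := Res_V^X: W_Z\to\mathcal{S}(V)$ is well-defined by Proposition \ref{prop-res-Wz-to-U-is-Sch}. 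The identity $\beta\circ\alpha = \mathrm{Id}$ is immediate, and $\alpha\circ\beta = \mathrm{Id}$ follows because any $\phi\in W_Z$ vanishes on $Z$ (flatness at a point forces the value there to be $0$), so $\phi$ and $\alpha(\phi|_V)$ are two Schwartz functions on $X$ agreeing on both $V$ and $Z$.

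For continuity of $\alpha$, I factor it as
\[
\mathcal{S}(V)\cong\mathcal{S}(\hat V)/I_{Sch}^{\hat V}(V)\longrightarrow\mathcal{S}(U)/I_{Sch}^{U}(X) = \mathcal{S}(X),
\]
where the middle arrow is induced by the continuous extension-by-zero $\mathcal{S}(\hat V)\hookrightarrow\mathcal{S}(U)$ provided by Theorem \ref{AG-thm-Char-Nash}. The quotient map is well-defined because an element of $I_{Sch}^{\hat V}(V)$ extends by zero to a Schwartz function on $U$ that vanishes on $V$ and is flat at $Z$, and hence vanishes on all of $X$.

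The main obstacle is showing that $W_Z$ is closed in $\mathcal{S}(X)$, which is equivalent to promoting $\beta$ to a continuous map. My plan is to exhibit $W_Z$ as an intersection of kernels of continuous linear functionals, using the equivalence of flatness characterizations established in [BMP2]: for each $p\in Z$, each NQN curve $\gamma:(-\epsilon,\epsilon)\to X$ with $\gamma(0)=p$, and each order $k$, the functional $\phi\mapsto\frac{d^k}{dt^k}(\phi\circ\gamma)|_{t=0}$ is continuous on $\mathcal{S}(X)$, since $\gamma^*$ maps continuously into $\mathcal{S}((-\epsilon,\epsilon))$ and point evaluation of derivatives is continuous there. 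Flatness at $p$ is the simultaneous vanishing of all such functionals, so $W_Z$ emerges as an intersection of closed subspaces and is therefore closed. By Proposition \ref{prop-Closed-Frechet-subspace}, $W_Z$ is itself a Fr\'echet space, making $\alpha:\mathcal{S}(V)\to W_Z$ a continuous linear bijection between Fr\'echet spaces. The Banach open mapping theorem (Theorem \ref{thm-Banach-open-mapping}) then promotes $\alpha$ to a topological isomorphism whose inverse is $\beta$, completing the proof.
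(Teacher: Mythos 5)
Your proposal follows essentially the same route as the paper: establish $Ext_{V}^{X}(\mathcal{S}(V))\subset W_{Z}$ via Lemma \ref{lem-ext-by-0-affine}, establish $Res_{V}^{X}(W_{Z})\subset\mathcal{S}(V)$ via Proposition \ref{prop-res-Wz-to-U-is-Sch}, check that the two maps are mutually inverse, show continuity of the extension by factoring through the ambient space (your $\hat V\subset U$ in place of the paper's $W\subset\mathbb{R}^{n}$, but it is the same idea), and finish with Banach's open mapping theorem. The one place you genuinely go beyond the paper is in arguing that $W_{Z}$ is closed: the paper simply asserts that $\{\phi\in\mathcal{S}(X)\mid\phi\text{ flat at }z\}$ is closed for each $z$, while you try to exhibit $W_{Z}$ as an intersection of kernels of continuous functionals given by derivatives along NQN curves. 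Two problems with that argument as written. First, $\gamma^{*}$ does not map $\mathcal{S}(X)$ continuously into $\mathcal{S}((-\epsilon,\epsilon))$; composing with a curve does not yield Schwartz decay (a constant curve already fails). What you actually need is continuity of $\phi\mapsto\hat\phi\circ\tilde\gamma$ into $C^{\infty}$ near $0$ (where $\hat\phi$ is a lift to $\mathcal{S}(U)$ and $\tilde\gamma$ a Nash extension of $\gamma$), after which derivative evaluation at $0$ is continuous; this is true and fixable, but the claim as stated is wrong. Second, the asserted equivalence "flat at $p$ in the paper's sense (admits a flat smooth extension) $\Leftrightarrow$ all curve derivatives at $p$ vanish" is a substantive statement. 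The paper's introduction gestures at {[}BMP2{]} for equivalence of flatness characterizations, but {[}BMP2{]} is about $C^{m}$ Whitney fields on subanalytic sets, and the precise equivalence you need should be stated and located rather than cited wholesale. None of this undermines the overall structure, which matches the paper, but the closedness step is the one place your proof needs tightening before it is complete.
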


\begin{proof}
By \ref{prop-Closed-Frechet-subspace}, as $W_{Z}=\bigcap\limits _{z\in Z}\left\{ \phi\in\mathcal{S}\left(X\right)|\phi\text{ is flat on }z\right\} $
is a closed subspace of $\mathcal{S}\left(X\right)$, as an intersection
of closed subets, it is a Fréchet space. Lemma \ref{lem-ext-by-0-affine}
shows that for any $f\in\mathcal{S}\left(V\right)$, we get $Ext_{V}^{X}\left(f\right)\in\mathcal{S}\left(X\right)$
and $Ext_{V}^{X}\left(f\right)$ is flat on $Z$, i.e. $Ext_{V}^{X}\left(\mathcal{S}\left(V\right)\right)\subset W_{Z}$.
This extension is a continuous map. To show that, consider the closed
set $\tilde{X}$ corresponding to $X$. Define $W:=\mathbb{R}^{n}\backslash Z$.
This is an open semi-algebraic set, thus a Nash variety. $V=W\cap\tilde{X}$
so $V$ is closed in $W$. Take some closed embedding $W\hookrightarrow\mathbb{R}^{N}$.
Then, by Remark \ref{rem-NQN-Like-Sch-Tempered} $\mathcal{S}\left(V\right)\cong\mathcal{S}\left(W\right)/I_{Sch}^{W}\left(V\right)$.
$W$ is open in $\mathbb{R}^{n}$ so by Theorem \ref{AG-thm-Char-Nash},
$Ext_{W}^{\mathbb{R}^{n}}$ is a closed embedding, and thus continuous,
$\mathcal{S}\left(W\right)\hookrightarrow\mathcal{S}\left(\mathbb{R}^{n}\right)$.
Therefore, the map 
\[
\mathcal{S}\left(V\right)\cong\mathcal{S}\left(W\right)/I_{Sch}^{W}\left(V\right)\to\mathcal{S}\left(\mathbb{R}^{n}\right)/I_{Sch}^{\mathbb{R}^{n}}\left(\tilde{X}\right)=\mathcal{S}\left(\tilde{X}\right)
\]
is continuous, i.e. $Ext_{V}^{\tilde{X}}$ is continuous, and $Ext_{V}^{X}$
is continuous as well.

We saw in Proposition \ref{prop-res-Wz-to-U-is-Sch} that $Res_{V}^{X}\left(W_{Z}\right)\subset\mathcal{S}\left(V\right)$.

As $Res_{V}^{X}\circ Ext_{V}^{X}:\mathcal{S}\left(V\right)\to\mathcal{S}\left(V\right)$
is the identity operator by definition, and so is $Ext_{V}^{X}\circ Res_{V}^{X}:W_{Z}\to W_{Z}$,
we get that $Ext_{V}^{X}$ is a continuous bijection. By Theorem \ref{thm-Banach-open-mapping}
this means $Ext_{V}^{X}$ is an isomorphism of Fréchet spaces.
\end{proof}
\begin{cor}
\label{cor-Sch-flat-at-point}Let X be an affine QN variety. A Schwartz
function $f\in\mathcal{S}\left(X\right)$ is flat at $p\in X$ if
and only if $f|_{X\backslash\left\{ p\right\} }\in\mathcal{S}\left(X\backslash\left\{ p\right\} \right)$.
\end{cor}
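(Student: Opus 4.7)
The plan is to deduce this corollary directly from Theorem \ref{thm-affine-char} by taking the closed semi-algebraic subset $Z := \{p\} \subset X$. With $V := X \setminus \{p\}$, that theorem identifies, via mutually inverse extension-by-zero and restriction maps, the Fr\'echet space $\mathcal{S}(V)$ with
\[
W_{\{p\}} = \{\phi \in \mathcal{S}(X) : \phi \text{ is flat at } p\};
\]
so the whole corollary reduces to recognizing $W_Z$ for this particular $Z$ as the set of Schwartz functions flat at $p$, which is immediate from the definition of flatness at a point.

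The forward direction is then essentially by definition: if $f \in \mathcal{S}(X)$ is flat at $p$, then $f \in W_{\{p\}}$, and Theorem \ref{thm-affine-char} gives $f|_V = Res_V^X(f) \in \mathcal{S}(V)$. For the converse, given $f \in \mathcal{S}(X)$ with $f|_V \in \mathcal{S}(V)$, I would form $g := Ext_V^X(f|_V) \in W_{\{p\}} \subset \mathcal{S}(X)$, which by construction satisfies $g|_V = f|_V$. Both $f$ and $g$ are continuous on $X$ and agree on $V$, so continuity at $p$ forces $f = g$ everywhere, placing $f$ in $W_{\{p\}}$ and hence showing that $f$ is flat at $p$.

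All the substantive work—the subanalytic Whitney-type extension that makes $Ext_V^X$ surjective onto $W_Z$, and the Fr\'echet space identification via the Banach open mapping theorem—has already been absorbed into the proof of Theorem \ref{thm-affine-char}. I therefore do not anticipate any real obstacle here: the corollary is a clean specialization of that theorem to the singleton $Z = \{p\}$, and the only remaining step is the short continuity argument at the single point $p$ that matches $f$ with $Ext_V^X(f|_V)$.
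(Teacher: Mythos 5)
Your proposal is correct and matches the paper's proof, which simply invokes Theorem \ref{thm-affine-char} with $Z = \{p\}$. The continuity argument you give for the converse direction — identifying $f$ with $Ext_V^X(f|_V)$ because both are continuous on $X$ and agree on $X \setminus \{p\}$ (which is dense provided $p$ is not isolated) — is exactly the step needed to go from the theorem's bijection $Ext_V^X: \mathcal{S}(V) \to W_{\{p\}}$ to the stated characterization of $f$ itself, and is left implicit in the paper's one-line proof.
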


\begin{proof}
apply Theorem \ref{thm-affine-char} to $Z=\left\{ p\right\} $.
\end{proof}
\begin{rem}
By the same argument for an arbitrary function $f\in C^{\infty}\left(X\right)$
(i.e. a function that is a restriction of a smooth function from an
open set in which the set corresponding to $X$ is closed) and any
$p\in X$, the following conditions are equivalent:

(1) $f$ is flat at $p$.

(2) There exists a smooth compactly supported function $\rho$ on
$\mathbb{R}^{n}$, such that $\rho$ is identically 1 on some open
neighborhood of $p$ and $\left(f\cdot\rho\right)|_{X\backslash\left\{ p\right\} }\in\mathcal{S}\left(X\backslash\left\{ p\right\} \right)$.
\end{rem}

\begin{defn}
Let $X$ be an affine QN variety. Define the \textbf{space of tempered
distributions on $\boldsymbol{X}$} as the space of continuous linear
functionals on $\mathcal{S}\left(X\right)$. Denote this space by
$\mathcal{S}^{*}\left(X\right)$.
\end{defn}

$\:$

\subsection{General QN}
\begin{defn}
\textit{\label{def-Schwartz-on-Gen-QN-Cov-C}A Schwartz function on
a (general) QN variety $X$ with cover $C$:}\textcolor{black}{{} let
$X$ be a QN variety ,and let }$C$ be an open affine QN cover of
$X$ - i.e. $\bigcup\limits _{i=1}^{m}X_{i}=X$. Denote by $Func\left(X,\mathbb{R}\right)$
the space of all real valued functions on $X$. There is a natural
map $\psi:\bigoplus\limits _{i=1}^{m}Func\left(X_{i},\mathbb{R}\right)\to Func\left(X,\mathbb{R}\right)$.
Define the space of Schwartz functions on $X$ associated with the
cover $C$ by $\mathcal{S}_{C}\left(X\right):=\psi\left(\bigoplus\limits _{i=1}^{m}\mathcal{S}\left(X_{i}\right)\right)$.
\end{defn}

\begin{lem}
\label{lem-Schwartz-Gen-Frech}Let $X$ be a QN variety, and let $C$
be an open affine QN cover of $X$. Then the space $\mathcal{S}_{C}\left(X\right)$
is a Fréchet space.
\end{lem}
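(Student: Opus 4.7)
The plan is to realise $\mathcal{S}_C(X)$ as a Fr\'echet quotient of the direct sum of Schwartz spaces on the pieces. Since the cover $C=\{X_i\}_{i=1}^m$ is finite (we are in restricted topology) and each $X_i$ is an affine QN variety, Lemma \ref{lem-Sch-NQN-is-Frechet} together with the affine pullback identification $\mathcal{S}(X_i)\cong \mathcal{S}(\tilde{X}_i)$ shows that each $\mathcal{S}(X_i)$ is a Fr\'echet space. Therefore the finite direct sum $E:=\bigoplus_{i=1}^{m}\mathcal{S}(X_i)$ carries a canonical Fr\'echet topology.

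Let $\Psi:=\psi|_{E}:E\to Func(X,\mathbb{R})$ be the restriction of the natural map from Definition \ref{def-Schwartz-on-Gen-QN-Cov-C}. By construction $\Psi(E)=\mathcal{S}_C(X)$, so I would equip $\mathcal{S}_C(X)$ with the quotient topology coming from the surjection $E\twoheadrightarrow \mathcal{S}_C(X)$. By Proposition \ref{prop-Closed-Frechet-subspace} together with the standard fact that a quotient of a Fr\'echet space by a closed subspace is Fr\'echet, it suffices to prove that $K:=\ker\Psi$ is closed in $E$.

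To show $K$ is closed, I would argue that $K$ is an intersection of kernels of continuous linear functionals. For each $x\in X$ define the evaluation
\[
\mathrm{ev}_x:E\longrightarrow \mathbb{R},\qquad (f_1,\dots,f_m)\longmapsto \sum_{i\,:\,x\in X_i} f_i(x),
\]
so that $K=\bigcap_{x\in X}\ker(\mathrm{ev}_x)$. The key point is that each $\mathrm{ev}_x$ is continuous. Writing $\mathcal{S}(X_i)=\mathcal{S}(U_i)/I_{Sch}^{U_i}(\tilde{X}_i)$ via Theorem \ref{thm-Res To Closed} and Remark \ref{rem-NQN-Like-Sch-Tempered}, point evaluation at $x\in X_i$ on the quotient is well defined (since the ideal we quotient by is exactly the functions vanishing on $\tilde{X}_i$), and is dominated by the sup-norm semi-norm $\|F\|_{\infty}=\sup_{y\in U_i}|F(y)|$, which is one of the defining continuous semi-norms of $\mathcal{S}(U_i)$ and hence descends to a continuous semi-norm on the quotient. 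Summing the finitely many coordinates keeps continuity, so $\mathrm{ev}_x$ is continuous. Consequently $K$ is closed and the quotient Fr\'echet structure on $\mathcal{S}_C(X)$ exists.

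The only delicate step is ensuring that point evaluation is really continuous on $\mathcal{S}(X_i)$ despite the quotient and pullback constructions used to define these spaces; once the sup-norm semi-norm is tracked through Definition \ref{Def-Quasy-Nash}, Remark \ref{rem-NQN-Like-Sch-Tempered} and Theorem \ref{thm-Res To Closed}, the rest of the argument is formal. Note also that we do not need $\Psi$ to be injective, nor that extensions by zero stay Schwartz globally on $X$ \textemdash{} the topology is obtained intrinsically from the quotient, and the only output of the lemma is that $\mathcal{S}_C(X)$ with this topology is Fr\'echet.
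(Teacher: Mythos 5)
Your proof takes essentially the same route as the paper's: realize $\mathcal{S}_C(X)$ as the quotient of the Fr\'echet space $\bigoplus_{i=1}^{m}\mathcal{S}(X_i)$ by $\ker\psi$, and show that kernel is closed because it is an intersection of kernels of point-evaluation functionals. You additionally spell out why each $\mathrm{ev}_x$ is continuous (factoring through the quotient $\mathcal{S}(U_i)/I_{Sch}^{U_i}(\tilde{X}_i)$ and being dominated by the sup-norm semi-norm), a detail the paper compresses into the phrase that the kernel is cut out by ``closed conditions''; the substance of the argument is identical.
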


\begin{proof}
First note that $\psi\left(\bigoplus\limits _{i=1}^{m}\mathcal{S}\left(X_{i}\right)\right)\cong\bigoplus\limits _{i=1}^{m}\mathcal{S}\left(X_{i}\right)/Ker\left(\psi|_{\bigoplus\limits _{i=1}^{m}\mathcal{S}\left(X_{i}\right)}\right)$
with the natural quotient topology. A direct sum of Fréchet spaces
is a Fréchet space. The kernel of $\psi|_{\bigoplus\limits _{i=1}^{m}\mathcal{S}\left(X_{i}\right)}$
is a closed subspace, as $\bigoplus\limits _{i=1}^{m}s_{i}\in Ker\left(\psi|_{\bigoplus\limits _{i=1}^{m}\mathcal{S}\left(X_{i}\right)}\right)$
if and only if for any $x\in X$, $\sum\limits _{i\in J_{x}}s_{i}\left(x\right)=0$,
where $J_{x}:=\left\{ 1\leq i\leq m|x\in X_{i}\right\} $, i.e. the
kernel is given by infinitely many \textquotedbl{}closed conditions\textquotedbl{}.
So by Proposition \ref{prop-Closed-Frechet-subspace} and {[}T - Proposition
7.9{]}, the quotient is a Fréchet space as well.
\end{proof}
\begin{lem}
\label{lem-Sch-cover-indep}Let $X$ be a QN variety and let $C,\:D$
be two open QN covers of $X$. Then $\mathcal{S}_{C}\left(X\right)\cong\mathcal{S}_{D}\left(X\right)$
as Fréchet spaces.
\end{lem}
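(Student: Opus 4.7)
The plan is to compare both covers with their common refinement. Writing $C=\{X_i\}_{i=1}^m$ and $D=\{Y_j\}_{j=1}^n$, I form $E:=\{X_i\cap Y_j\}_{i,j}$, which by Remark \ref{Rem-NQN-Like} is again a finite open affine QN cover of $X$. It suffices to show $\mathcal{S}_C(X)\cong\mathcal{S}_E(X)$ as Fr\'echet spaces, since the symmetric argument yields $\mathcal{S}_D(X)\cong\mathcal{S}_E(X)$ and the two isomorphisms then compose. The candidate map is the identity on $Func(X,\mathbb{R})$, so the task splits into (i) showing $\mathcal{S}_C(X)$ and $\mathcal{S}_E(X)$ coincide as subsets of $Func(X,\mathbb{R})$, and (ii) checking that the two Fr\'echet topologies on this common set agree.

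For $\mathcal{S}_E(X)\subset\mathcal{S}_C(X)$ with continuity: any $f=\psi\bigl(\bigoplus_{i,j}f_{ij}\bigr)$ with $f_{ij}\in\mathcal{S}(X_i\cap Y_j)$ can be re-presented via $C$ by extending each $f_{ij}$ by zero from $X_i\cap Y_j$ to $X_i$, which is a continuous map $\mathcal{S}(X_i\cap Y_j)\to\mathcal{S}(X_i)$ by Lemma \ref{lem-ext-by-0-affine} together with the continuity of extension by zero established in Theorem \ref{thm-affine-char}. Setting $f_i:=\sum_j Ext_{X_i\cap Y_j}^{X_i}(f_{ij})\in\mathcal{S}(X_i)$ realises $f=\psi\bigl(\bigoplus_i f_i\bigr)\in\mathcal{S}_C(X)$, and the assignment $f\mapsto\bigoplus_i f_i$ is continuous.

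The main step is the reverse inclusion. Given $f=\psi\bigl(\bigoplus_i f_i\bigr)$ with $f_i\in\mathcal{S}(X_i)$, I need to split each $f_i$ as a finite sum $\sum_j f_{ij}$ with $f_{ij}\in\mathcal{S}(X_i\cap Y_j)$. By Remark \ref{Rem-NQN-Like} I realise $X_i$ as a closed subset of an open semi-algebraic neighbourhood $U_i\subset\mathbb{R}^{n_i}$, which is itself an affine Nash manifold. For each $j$ I pick an open semi-algebraic $W_{ij}\subset U_i$ with $W_{ij}\cap X_i=X_i\cap Y_j$; then $\{W_{ij}\}_j\cup\{U_i\setminus X_i\}$ is a finite open cover of $U_i$. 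Part (2) of Theorem \ref{AG-thm-part-of-unity} then produces tempered $\alpha_{ij}$ on $U_i$ with $supp(\alpha_{ij})\subset W_{ij}$, together with $\alpha_{i0}$ supported in $U_i\setminus X_i$, satisfying $\alpha_{i0}+\sum_j\alpha_{ij}\equiv1$ and $\alpha_{ij}\mathcal{S}(U_i)\subset\mathcal{S}(W_{ij})$. Because $\alpha_{i0}$ vanishes on $X_i$, the restrictions $\alpha_{ij}|_{X_i}$ form a tempered partition of unity on $X_i$ subordinate to $\{X_i\cap Y_j\}_j$. Lifting $f_i$ to any $\tilde f_i\in\mathcal{S}(U_i)$, the product $\alpha_{ij}\tilde f_i$ lies in $\mathcal{S}(W_{ij})$, and using Remark \ref{rem-NQN-Like-Sch-Tempered} I define $f_{ij}:=(\alpha_{ij}\tilde f_i)|_{X_i\cap Y_j}\in\mathcal{S}(X_i\cap Y_j)$. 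Then $\sum_j f_{ij}=f_i$, so $f\in\mathcal{S}_E(X)$; the choice of lift $\tilde f_i$ is immaterial since $I_{Sch}^{U_i}(X_i)$ restricts to zero on $X_i$, and continuity follows because lifting, multiplication by a tempered function (Proposition \ref{AG-prop-ts-is-s}) and restriction are each continuous.

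Combining the two directions gives mutually inverse continuous linear maps between the Fr\'echet spaces $\mathcal{S}_C(X)$ and $\mathcal{S}_E(X)$ (Lemma \ref{lem-Schwartz-Gen-Frech}), which by the Banach open mapping Theorem \ref{thm-Banach-open-mapping} are automatically isomorphisms of Fr\'echet spaces. The hard part will be the third paragraph: engineering the tempered partition of unity on the ambient Nash manifold $U_i$ so that it descends to $X_i$ with the required subordination to the lifted cover, and verifying simultaneously that multiplication by these tempered functions preserves both the ideal defining $\mathcal{S}(X_i)$ and the support conditions needed to land in $\mathcal{S}(X_i\cap Y_j)$.
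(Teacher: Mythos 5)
Your proof is correct and takes essentially the same route as the paper: both arguments pass through the common refinement $\{X_i\cap Y_j\}$, both lift a Schwartz function $f_i$ on $X_i$ to the ambient Nash manifold $U_i$, add $U_i\setminus X_i$ to the lifted cover so that the Nash tempered partition of unity from Theorem \ref{AG-thm-part-of-unity} descends to $X_i$, and both use continuity of extension by zero together with Theorem \ref{thm-Banach-open-mapping}. The only cosmetic differences are that you name the refinement $E$ explicitly and assert continuity of both directions before invoking open mapping (the paper only checks the extension-by-zero direction, which suffices); also the phrase ``lifting \ldots\ is continuous'' should really be ``the map factors through the quotient topology on $\mathcal{S}(X_i)$'', since no continuous section of $\mathcal{S}(U_i)\to\mathcal{S}(X_i)$ is claimed, but the argument you give already contains the fact needed to fix this wording.
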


\begin{proof}
To prove this lemma, we will show the spaces have a continuous bijective
map between them. Thus, by Theorem \ref{thm-Banach-open-mapping}
they are isomorphic as Fréchet spaces. 

We start with bijectiveness. Let the open affine cover $C$ be $X=\bigcup\limits _{i=1}^{m}Y_{i}$
and the open affine cover $D$ be $X=\bigcup\limits _{j=1}^{n}X_{j}$.
By definition, $s\in\mathcal{S}_{D}\left(X\right)\Leftrightarrow s=\sum\limits _{j=1}^{m}Ext_{X_{j}}^{X}s_{j}$
where for each $j$, $s_{j}\in\mathcal{S}\left(X_{j}\right)$. Fix
one such affine QN variety $X_{j}$. $X_{j}$ can be covered by the
open QN cover $X_{j}=\bigcup\limits _{i=1}^{n}\left(Y_{i}\cap X_{j}\right)=:\bigcup\limits _{i=1}^{n}X_{ij}$.
As $X_{j}$ is affine for any $j$, denote by $\tilde{X}_{j}\subset\mathbb{R}^{n_{j}}$
a closed set corresponding to $X_{j}$. Denote by $\tilde{X}_{ij}$
the open subsets of $\tilde{X}{}_{j}$ corresponding to $X_{ij}$.
For any $j$, $s_{j}\in\mathcal{S}\left(\tilde{X}_{j}\right)$ is
a restriction of a Schwartz function $S_{j}\in\mathcal{S}\left(U_{j}\right)$
to $\tilde{X}{}_{j}$, where $U_{j}\subset\mathbb{R}^{n_{j}}$ is
an open set in which $\tilde{X}_{j}$ is closed. As $\tilde{X}{}_{ij}\subset\tilde{X}{}_{j}$
are open semi-algebraic, we may find open semi-algebraic subsets $U_{ij}\subset U_{j}$
such that $U_{ij}\cap\tilde{X}{}_{j}=\tilde{X}{}_{ij}$. As $U_{j}$
and the $U_{ij}$'s are Nash manifolds, we may add the open Nash set
$W_{j}:=U_{j}\backslash\tilde{X}{}_{j}$ to get a Nash open cover
of $U_{j}$, and use Theorem \ref{AG-thm-part-of-unity}(partition
of unity) to get the Schwatrz functions $S_{ij}\in\mathcal{S}\left(U_{ij}\right),\:S_{W_{j}}\in\mathcal{S}\left(W_{j}\right)$,
such that extending those functions by zero sum up to $S_{j}$, i.e.
$\sum\limits _{i=1}^{n}Ext_{U_{ij}}^{U_{j}}S_{ij}+Ext_{W_{j}}^{U_{j}}S_{W_{j}}=S_{j}$.
Thus, after restricting those functions to $\tilde{X}{}_{j}$, we
can pull them back to get $\sum\limits _{i=1}^{n}Ext_{X_{ij}}^{X_{j}}s_{ij}=s_{j}$.
So $s\in\mathcal{S}_{D}\left(X\right)\Leftrightarrow s=\sum\limits _{j=1}^{m}\sum\limits _{i=1}^{n}Ext_{X_{ij}}^{X_{j}}s_{ij}$.
As both covers are finite, the sums may commute, and we get that $s\in\mathcal{S}_{D}\left(X\right)\Leftrightarrow s\in\mathcal{S}_{C}\left(X\right)$.

To prove the map is continuous, let $\psi:\bigoplus\limits _{j=1}^{n}Func\left(X_{j},\mathbb{R}\right)\to Func\left(X,\mathbb{R}\right)$
be the natural map from Definition \ref{def-Schwartz-on-Gen-QN-Cov-C}.
We begin with the claim that $\mathcal{S}\left(X_{ij}\right)\rightarrow\mathcal{S}\left(X_{j}\right)$
is a continuous map. Take the closed sets $\tilde{X}_{j}\subset\mathbb{R}^{n_{j}}$,
and $\tilde{X}_{ij}$'s as before. Each function $f_{ij}\in\mathcal{S}\left(\tilde{X}_{ij}\right)$
is a restriction of some $F_{ij}\in\mathcal{S}\left(U_{ij}\right)$,
where $U_{ij}\subset\mathbb{R}^{n_{j}}$ is an open subset such that
$U_{ij}\cap\tilde{X}_{j}=\tilde{X}_{ij}$. Denote $U_{j}:=\bigcup\limits _{i=1}^{n}U_{ij}$.
By Theorem \ref{AG-thm-Char-Nash}, the extension by zero of Schwartz
functions $Ext_{U_{ij}}^{U_{j}}F_{ij}=F_{j}\in\mathcal{S}\left(U_{j}\right)$
is a closed embedding, thus continuous. As $\tilde{X}_{j}\subset U_{j}$
is a closed subset, restricting the extension maps to $\tilde{X}_{j}$
shows that the extension by zero $\mathcal{S}\left(\tilde{X}_{ij}\right)\rightarrow\mathcal{S}\left(\tilde{X}_{j}\right)$
is continuous. Thus, by Theorem \ref{thm-Banach-open-mapping}, $\bigoplus\limits _{i}\mathcal{S}\left(X_{ij}\right)/Ker\left(\phi_{j}|_{\bigoplus\limits _{i}\mathcal{S}\left(X_{ij}\right)}\right)\cong\mathcal{S}\left(X_{j}\right)$,
where $\phi_{j}:\bigoplus\limits _{i}Func\left(X_{ij},\mathbb{R}\right)\to Func\left(X_{j},\mathbb{R}\right)$
is the natural map from Definition \ref{def-Schwartz-on-Gen-QN-Cov-C}.
Thus, we get that 
\[
\mathcal{S}_{C}\left(X\right)=\psi\left(\bigoplus\limits _{j}\mathcal{S}\left(X_{j}\right)\right)\cong\psi\left(\bigoplus\limits _{j}\phi_{j}\left(\bigoplus\limits _{i}\mathcal{S}\left(X_{ij}\right)\right)\right)\cong\phi\left(\bigoplus\limits _{j}\bigoplus\limits _{i}\mathcal{S}\left(X_{ij}\right)\right),
\]
where $\phi:\bigoplus\limits _{i,j}Func\left(X_{ij},\mathbb{R}\right)\to Func\left(X,\mathbb{R}\right)$
is the natural map. The same can be done with the cover $D$ to get
the result.
\end{proof}
In view of this lemma, we will denote the space of Schwartz functions
on a QN variety $X$ just by $\mathcal{S}\left(X\right)$ without
specifying the cover.
\begin{lem}
\label{lem-First-Result}Let $\varphi:X\to Y$ be a QN isomorphism.
Then $\varphi^{*}|_{\mathcal{S}\left(Y\right)}:\mathcal{S}\left(Y\right)\to\mathcal{S}\left(X\right)$
is an isomorphism of Fréchet spaces. 
\end{lem}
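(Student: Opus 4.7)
The plan is to reduce to the already established affine case and then assemble via the cover-independent construction of $\mathcal{S}(X)$.

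First, I would use Lemma \ref{lem-gen-QN-equiv} together with the fact that $\varphi$ is a QN isomorphism to choose a compatible affine structure on both sides. Pick any finite open affine cover $Y=\bigcup_{i=1}^{m}Y_{i}$, and set $X_{i}:=\varphi^{-1}(Y_{i})$. Since $\varphi$ is a homeomorphism whose inverse is also a QN morphism, each $X_{i}$ is an open subvariety of $X$ that is QN isomorphic to $Y_{i}$ via $\varphi|_{X_{i}}$, hence is affine. Thus $\{X_{i}\}_{i=1}^{m}$ is an open affine cover of $X$, and $\varphi|_{X_{i}}:X_{i}\to Y_{i}$ is an isomorphism of affine QN varieties for each $i$.

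Next, I would apply the affine case of the statement to each $\varphi|_{X_{i}}$. Concretely, the lemma just before Theorem \ref{thm-Res To Closed} (combined with Lemma \ref{lem-NQN-Frechet-iso} applied to the corresponding closed NQN sets, for both $\varphi|_{X_{i}}$ and its inverse) shows that the pullback $(\varphi|_{X_{i}})^{*}:\mathcal{S}(Y_{i})\to\mathcal{S}(X_{i})$ is well-defined and bijective; continuity in both directions follows from the construction in Lemma \ref{lem-NQN-Frechet-iso}, and then the Banach open mapping theorem (Theorem \ref{thm-Banach-open-mapping}) upgrades this to an isomorphism of Fr\'echet spaces. Taking the direct sum over $i$ yields a Fr\'echet isomorphism
\[
\Phi:\bigoplus_{i=1}^{m}\mathcal{S}(Y_{i})\;\xrightarrow{\sim}\;\bigoplus_{i=1}^{m}\mathcal{S}(X_{i}).
\]

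Finally, I would descend $\Phi$ to a map between $\mathcal{S}(Y)$ and $\mathcal{S}(X)$ using Definition \ref{def-Schwartz-on-Gen-QN-Cov-C} and Lemma \ref{lem-Sch-cover-indep}. By that definition one has natural continuous surjections $\psi_{Y}:\bigoplus\mathcal{S}(Y_{i})\twoheadrightarrow\mathcal{S}(Y)$ and $\psi_{X}:\bigoplus\mathcal{S}(X_{i})\twoheadrightarrow\mathcal{S}(X)$, each being a quotient by a closed subspace (as in the proof of Lemma \ref{lem-Schwartz-Gen-Frech}), so the quotient topologies coincide with the Fr\'echet structure on $\mathcal{S}(X)$ and $\mathcal{S}(Y)$. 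Because $\varphi$ is a set-theoretic bijection carrying $X_{i}$ onto $Y_{i}$, an element $\bigoplus s_{i}\in\bigoplus\mathcal{S}(Y_{i})$ satisfies $\sum_{i\in J_{y}}s_{i}(y)=0$ for every $y\in Y$ if and only if its image $\bigoplus (\varphi|_{X_{i}})^{*}s_{i}$ satisfies the analogous vanishing condition at every point of $X$. Hence $\Phi$ maps $\ker\psi_{Y}$ isomorphically onto $\ker\psi_{X}$ and descends to a continuous linear bijection $\varphi^{*}:\mathcal{S}(Y)\to\mathcal{S}(X)$, which is an isomorphism of Fr\'echet spaces by one more application of Theorem \ref{thm-Banach-open-mapping}.

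The only delicate point, and the main place to be careful, is the last paragraph: matching the kernels of $\psi_{X}$ and $\psi_{Y}$ under $\Phi$ and verifying that the map one obtains on $\mathcal{S}(Y)$ is indeed $\varphi^{*}$ in the naive sense. All the heavy analytic content has already been absorbed into the affine statement and into Lemma \ref{lem-Sch-cover-indep}, so no further subanalytic or Whitney-extension input is required here.
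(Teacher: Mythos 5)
Your proof is correct and follows essentially the same route as the paper: use Lemma \ref{lem-gen-QN-equiv} to reduce to a matched affine cover, apply the affine/NQN pullback isomorphism on each piece, and invoke cover independence (Lemma \ref{lem-Sch-cover-indep}) to conclude. Your version is somewhat more explicit about the last step, descending the direct-sum isomorphism through the quotient definitions of $\mathcal{S}(X)$ and $\mathcal{S}(Y)$ by matching kernels and citing Theorem \ref{thm-Banach-open-mapping}, whereas the paper argues pointwise via the cosheaf decomposition, but this is a difference in exposition rather than in the underlying argument.
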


\begin{proof}
Let $f\in\mathcal{S}\left(\tilde{Y}\right)$. By Lemma \ref{lem-gen-QN-equiv}
there exist open covers $\bigcup\limits _{i=1}^{N}\tilde{X}_{i}=\tilde{X}$,
$\bigcup\limits _{i=1}^{N}\tilde{Y}_{i}=\tilde{Y}$ such that $\tilde{\varphi}|_{\tilde{X}_{i}}:\tilde{X}_{i}\xrightarrow{\sim}\tilde{Y}_{i}$
are NQN isomorphisms. As Schwartz functions form a cosheaf, every
Schwartz function on $\tilde{Y}$ is a sum of extensions of Schwartz
functions on open subsets of $\tilde{Y}$. Let those subsets be $\tilde{Y}_{i}$.
According to Lemma \ref{lem-NQN-Frechet-iso}, a Schwartz function
$s_{i}\in\mathcal{S}\left(\tilde{Y}_{i}\right)$ is pulled back to
$\varphi^{*}s_{i}\in\mathcal{S}\left(\tilde{X}_{i}\right)$, and thus,
we get $\varphi^{*}s=\sum\limits _{i=1}^{n}Ext_{\tilde{X}_{i}}^{\tilde{X}}\left(\varphi^{*}s_{i}\right)$.
By definition and Lemma \ref{lem-Sch-cover-indep} this means $\varphi^{*}s\in\mathcal{S}\left(\tilde{X}\right)$.
Together with the NQN isomorphism on each such subset, we get the
result.
\end{proof}
\begin{lem}
\label{lem-Sch-Res-to-closed}Let $X$ be a QN variety, and $Z\subset X$
be some semi-algebraic closed subset. Then $Res_{Z}^{X}\left(\mathcal{S}\left(X\right)\right)=\mathcal{S}\left(Z\right)$.
\end{lem}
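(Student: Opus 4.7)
The plan is to reduce to the affine case (Theorem~\ref{thm-Res To Closed}) by means of a finite affine cover, and then reassemble via the cosheaf-like decomposition of $\mathcal{S}(X)$ supplied by Definition~\ref{def-Schwartz-on-Gen-QN-Cov-C}. First I would fix a finite open affine QN cover $X=\bigcup_{i=1}^m X_i$. Since $Z\subset X$ is closed, each $Z_i:=Z\cap X_i$ is closed in the affine QN variety $X_i$, so by Remark~\ref{Rem-NQN-Like} the induced $\mathbb{R}$-space structure makes $Z_i$ itself an affine QN variety, and $\{Z_i\}_{i=1}^m$ is a finite open affine QN cover of $Z$. Thus by Lemma~\ref{lem-Sch-cover-indep} we may compute both $\mathcal{S}(X)$ and $\mathcal{S}(Z)$ using these compatible covers.

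For the inclusion $Res_Z^X(\mathcal{S}(X))\subset\mathcal{S}(Z)$, take $f\in\mathcal{S}(X)$. By Definition~\ref{def-Schwartz-on-Gen-QN-Cov-C} we may write $f=\sum_{i=1}^m Ext_{X_i}^X f_i$ with $f_i\in\mathcal{S}(X_i)$. Restricting pointwise,
\[
f|_Z \;=\; \sum_{i=1}^m Ext_{Z_i}^Z\bigl(f_i|_{Z_i}\bigr),
\]
since $(Ext_{X_i}^X f_i)|_Z$ vanishes outside $Z_i$ and agrees with $f_i|_{Z_i}$ on $Z_i$. By the affine case (Theorem~\ref{thm-Res To Closed}) each $f_i|_{Z_i}$ lies in $\mathcal{S}(Z_i)$, so the sum lies in $\mathcal{S}(Z)$ by definition.

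For the reverse inclusion, take $s\in\mathcal{S}(Z)$. Again by definition and the cover $\{Z_i\}$, we have $s=\sum_{i=1}^m Ext_{Z_i}^Z s_i$ with $s_i\in\mathcal{S}(Z_i)$. By Theorem~\ref{thm-Res To Closed} applied to the closed subset $Z_i\subset X_i$, the restriction $\mathcal{S}(X_i)\to\mathcal{S}(Z_i)$ is surjective, so we may lift each $s_i$ to some $\tilde s_i\in\mathcal{S}(X_i)$ with $\tilde s_i|_{Z_i}=s_i$. Define
\[
\tilde s \;:=\; \sum_{i=1}^m Ext_{X_i}^X \tilde s_i \;\in\; \mathcal{S}(X).
\]
For any $z\in Z$ and any $i$ we have $(Ext_{X_i}^X\tilde s_i)(z)=\tilde s_i(z)=s_i(z)=(Ext_{Z_i}^Z s_i)(z)$ if $z\in Z_i$ and both sides vanish otherwise, so $\tilde s|_Z = s$, giving $s\in Res_Z^X(\mathcal{S}(X))$.

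The only mild subtlety is verifying that $Z_i$ is genuinely an affine QN variety and that the pointwise identities between extensions-by-zero and restrictions are legitimate; both are handled by Remark~\ref{Rem-NQN-Like} and the definition of $\psi$ in Definition~\ref{def-Schwartz-on-Gen-QN-Cov-C}. Beyond that the argument is purely a gluing exercise: the substantive analytic content sits in the affine Theorem~\ref{thm-Res To Closed}, and here we are only propagating it across a finite cover.
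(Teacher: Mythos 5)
Your proof is correct and follows essentially the same route as the paper: decompose $\mathcal{S}(X)$ (respectively $\mathcal{S}(Z)$) using a finite affine open cover $\{X_i\}$ (respectively $\{Z_i=Z\cap X_i\}$), apply the affine Theorem~\ref{thm-Res To Closed} piece by piece, and reassemble via extensions by zero. You are slightly more explicit than the paper in verifying that the reassembled lift actually restricts to $s$ on $Z$ and in noting via Remark~\ref{Rem-NQN-Like} that each $Z_i$ carries an affine QN structure, but the argument is the same.
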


\begin{proof}
Let $s\in\mathcal{S}\left(X\right)$, and let $X=\bigcup\limits _{i=1}^{m}X_{i}$
be some open affine QN cover, such that $s=\sum\limits _{i=1}^{m}Ext_{X_{i}}^{X}\left(s_{i}\right)$
for some $s_{i}\in\mathcal{S}\left(X_{i}\right)$. $Z\cap X_{i}$
is open in $Z$ and closed in $X_{i}$. By theorem \ref{thm-Res To Closed}
$s_{i}|_{Z\cap X_{i}}\in\mathcal{S}\left(Z\cap X_{i}\right)$, and
thus $s|_{Z}=\sum\limits _{i=1}^{m}Ext_{Z\cap X_{i}}^{Z}\left(s|_{Z\cap X_{i}}\right)\in\mathcal{S}\left(Z\right)$.

Now let $h\in\mathcal{S}\left(Z\right)$, and let $Z=\bigcup\limits _{i=1}^{m}\left(Z\cap X_{i}\right)$
where $X_{i}$ are as before. This is an open affine cover of $Z$
so $h=\sum\limits _{i=1}^{m}Ext_{Z\cap X_{i}}^{Z}\left(h_{i}\right)$
for some $h_{i}\in\mathcal{S}\left(Z\cap X_{i}\right)$. Each $Z\cap X_{i}$
is closed in $X_{i}$ so by theorem \ref{thm-Res To Closed} there
exist functions $H_{i}\in\mathcal{S}\left(X_{i}\right)$ such that
$H_{i}|_{Z\cap X_{i}}=h_{i}$. Thus, $\sum\limits _{i=1}^{m}Ext_{X_{i}}^{X}\left(H_{i}\right)\in\mathcal{S}\left(X\right)$.
\end{proof}
\begin{lem}
\label{lem-tempered-sheaf-affine}Let $X$ be an affine QN variety.
The assignment of the space of tempered functions to any open $V\subset X$,
together with the restriction of functions, form a sheaf on $X$.
\end{lem}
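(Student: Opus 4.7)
The plan is to deduce the sheaf property from the known sheaf property of tempered functions on Nash manifolds (Proposition \ref{AG-prop-aff-temp-sheaf}) by lifting to an open semi-algebraic neighborhood of a closed NQN representative of $X$. Fix such a model $\tilde{X}\subset U\subset\mathbb{R}^{n}$, with $U$ open semi-algebraic, hence a Nash manifold. The identity/locality axiom is immediate since tempered functions on any open $V\subset X$ are honest functions, so the real content of the lemma is the gluing axiom.

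For gluing, suppose $V=\bigcup_{i=1}^{m}V_{i}$ is a finite open cover and $t_{i}\in\mathcal{T}(V_{i})$ are compatible on overlaps. Writing $\tilde{V}_{i}\subset\tilde{X}$ for the corresponding open subsets, I would choose open semi-algebraic subsets $\tilde{W}_{i}\subset U$ with $\tilde{W}_{i}\cap\tilde{X}=\tilde{V}_{i}$, together with tempered extensions $T_{i}\in\mathcal{T}(\tilde{W}_{i})$ of $\tilde{t}_{i}$ (possible by the definition of tempered functions on an NQN set combined with the Nash sheaf property to localize to $\tilde{W}_{i}$). Setting $\tilde{W}:=\bigcup_{i}\tilde{W}_{i}$ gives an open Nash submanifold of $U$ with $\tilde{W}\cap\tilde{X}=\tilde{V}$, on which $\{\tilde{W}_{i}\}$ is a finite open Nash cover.

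Next, apply the tempered partition of unity on Nash manifolds (Theorem \ref{AG-thm-part-of-unity}) to obtain $\alpha_{i}\in\mathcal{T}(\tilde{W})$ with $supp(\alpha_{i})\subset\tilde{W}_{i}$ and $\sum_{i}\alpha_{i}=1$ on $\tilde{W}$. Define $T:=\sum_{i}\alpha_{i}T_{i}$, interpreting each summand as extended by zero from $\tilde{W}_{i}$ to $\tilde{W}$; then restrict $T$ to $\tilde{V}$ and pull back to $V$ to obtain the candidate $t\in\mathcal{T}(V)$. To verify $t|_{V_{i}}=t_{i}$, observe that at any point $p\in\tilde{V}_{i}$, if $\alpha_{j}(p)\neq 0$ then $p\in supp(\alpha_{j})\subset\tilde{W}_{j}$, whence $p\in\tilde{W}_{j}\cap\tilde{X}=\tilde{V}_{j}$. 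The overlap compatibility then yields $T_{j}(p)=\tilde{t}_{j}(p)=\tilde{t}_{i}(p)$, so $T(p)=\tilde{t}_{i}(p)\sum_{j}\alpha_{j}(p)=\tilde{t}_{i}(p)$, as desired.

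The main obstacle I anticipate is the technical verification that $\alpha_{i}T_{i}$, extended by zero from $\tilde{W}_{i}$ to $\tilde{W}$, is indeed tempered on $\tilde{W}$. Smoothness is straightforward: since $supp(\alpha_{i})$ is closed in $\tilde{W}$ and contained in $\tilde{W}_{i}$, the product vanishes identically on the open set $\tilde{W}\setminus supp(\alpha_{i})$, so the extension is smooth. Temperedness can then be deduced by invoking Proposition \ref{AG-prop-aff-temp-sheaf} on $\tilde{W}$ itself: relative to the open Nash cover $\{\tilde{W}_{i},\,\tilde{W}\setminus supp(\alpha_{i})\}$ of $\tilde{W}$, the extended function is tempered on each piece (a product of tempered functions on $\tilde{W}_{i}$, and identically zero on the other), so it is tempered on all of $\tilde{W}$. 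Restricting to $\tilde{V}$ and pulling back to $V$ then produces the required global tempered section.
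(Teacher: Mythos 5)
Your proof is correct and takes essentially the same route as the paper's: both lift the local tempered data to tempered functions on open Nash neighborhoods of representatives in $\mathbb{R}^n$, glue them there using the tempered partition of unity for Nash manifolds (Theorem \ref{AG-thm-part-of-unity}), and then restrict back, invoking the Nash sheaf property (Proposition \ref{AG-prop-aff-temp-sheaf}) to verify temperedness of the glued lift. The paper merely compresses this into a sketch referencing [ES, Prop.\ 4.3], whereas you spell out the verification that $t|_{V_i}=t_i$ and that the zero-extensions $\alpha_i T_i$ remain tempered on $\tilde{W}$ --- details the paper omits but which are exactly what is needed.
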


\begin{proof}
First let us show that tempered functions restricted to an open subset
remain tempered. Take a closed subset $\tilde{X}\subset\mathbb{R}^{n}$
corresponding to $X$. Tempered functions on $\tilde{X}$ are defined
as restrictions of tempered functions on some open neighborhood $U$
of $\tilde{X}$. As tempered functions on $U$ form a sheaf, and by
Remark \ref{rem-NQN-Like-Sch-Tempered}, we get that a restricted
tempered function remains tempered. It is now clear the above forms
a presheaf. The proof of the glueing property follows {[}ES - Proposition
4.3{]}. It uses again the definition of functions $f_{i}\in\mathcal{T}\left(V_{i}\right)$
on subsets $V_{i}$ of $\tilde{X}$ as restrictions of functions $\hat{f}_{i}\in\mathcal{T}\left(U_{i}\right)$
from neighborhoods $U_{i}$ of the $V_{i}$'s. Then, it uses tempered
partition of unity on the $\hat{f}_{i}$'s to create function $\hat{f}$
on $\bigcup\limits _{i}U_{i}$ such that $\hat{f}|_{\tilde{X}}=f$.
Finally, it proves that $\hat{f}|_{U_{i}}\in\mathcal{T}\left(U_{i}\right)$
for any $i$, in order to show $\hat{f}$ is tempered, what implies
$f$ is tempered.
\end{proof}
\begin{lem}
\label{lem-Tempered-eqiv}Let $X$ be a QN variety, and let $t:X\to\mathbb{R}$
be some function. Then the following conditions are equivalent:

(1) There exists an open affine QN cover $X=\bigcup\limits _{i=1}^{k}X_{i}$
such that for any $1\leq i\leq k$, $t|_{X_{i}}\in\mathcal{T}\left(X_{i}\right)$.

(2) For any open affine QN cover $X=\bigcup\limits _{i=1}^{k}X_{i}$
and any $1\leq i\leq k$, $t|_{X_{i}}\in\mathcal{T}\left(X_{i}\right)$.
\end{lem}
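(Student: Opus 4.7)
The implication (2) $\Rightarrow$ (1) is immediate, since by Definition \ref{def-gen-QN-variety} the QN variety $X$ admits at least one open affine QN cover, and applying the hypothesis (2) to it yields (1).

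For the substantive direction (1) $\Rightarrow$ (2), the plan is to combine the sheaf property of tempered functions on affine QN varieties (Lemma \ref{lem-tempered-sheaf-affine}) twice: once to restrict $t$ from each $X_i$ down to intersections with the arbitrary cover, and once to glue these restrictions up to each piece of the arbitrary cover. Concretely, let $X = \bigcup_{i=1}^{k} X_i$ be the cover witnessing (1), and let $X = \bigcup_{j=1}^{l} Y_j$ be an arbitrary open affine QN cover. Fix $j$ and consider the open cover $Y_j = \bigcup_{i=1}^{k} (Y_j \cap X_i)$. Each $Y_j \cap X_i$ is an open subset of both affine QN varieties $Y_j$ and $X_i$; by Remark \ref{Rem-NQN-Like} it inherits a natural affine QN structure, so the space $\mathcal{T}(Y_j \cap X_i)$ is intrinsically well-defined.

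Since $t|_{X_i}$ is tempered by assumption, Lemma \ref{lem-tempered-sheaf-affine} applied to $X_i$ yields $t|_{Y_j \cap X_i} \in \mathcal{T}(Y_j \cap X_i)$ for every $i$. These local sections are all restrictions of the single function $t|_{Y_j}$, hence they automatically agree on the pairwise intersections $Y_j \cap X_i \cap X_{i'}$. Applying the gluing axiom of the sheaf of tempered functions on the affine QN variety $Y_j$ (Lemma \ref{lem-tempered-sheaf-affine} again), the collection glues to a tempered function on $Y_j$, which can only be $t|_{Y_j}$. Hence $t|_{Y_j} \in \mathcal{T}(Y_j)$, and since $j$ was arbitrary this establishes (2).

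The only conceptual obstacle is ensuring that the notion of temperedness on the open set $Y_j \cap X_i$ is the same whether one views it as an open subset of $X_i$ or of $Y_j$. This is guaranteed by Remark \ref{rem-NQN-Like-Sch-Tempered}, which shows that for a locally closed semi-algebraic set, the space of tempered functions is intrinsically defined as the restriction of tempered functions from any open semi-algebraic neighborhood in an ambient $\mathbb{R}^n$; combined with the sheaf property of Lemma \ref{lem-tempered-sheaf-affine}, this makes the double use of the sheaf property above unambiguous. No further analytic input is needed.
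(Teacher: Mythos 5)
Your proof is correct and follows essentially the same strategy as the paper's: show (2)$\Rightarrow$(1) trivially, then for (1)$\Rightarrow$(2) cover each piece of the arbitrary cover by its intersections with the known cover, observe each restriction of $t$ to such an intersection is tempered, and glue via the affine sheaf property (Lemma~\ref{lem-tempered-sheaf-affine}). The paper spells out the intermediate step (temperedness on $X_i \cap Y_j$) more concretely via the ambient embedding and a restriction of the tempered extension $T$, whereas you invoke the restriction axiom of the sheaf abstractly and then flag the well-definedness issue of $\mathcal{T}(Y_j \cap X_i)$; both amount to the same appeal to Remark~\ref{rem-NQN-Like-Sch-Tempered}, so the difference is cosmetic.
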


\begin{proof}
Clearly (2) implies (1). For the other side assume there exist two
open affine QN covers $X=\bigcup\limits _{i=1}^{k}X_{i}=\bigcup\limits _{j=k+1}^{l}X_{j}$
such that for any $k+1\leq j\leq l$, $t|_{X_{j}}\in\mathcal{T}\left(X_{j}\right)$.
Fix some $1\leq i\leq k$. Note that $\left\{ X_{i}\cap X_{j}\right\} _{j=k+1}^{l}$
is an open cover of $X_{i}$. $t|_{X_{i}\cap X_{j}}$ is a restriction
of the tempered function $t|_{X_{j}}$to the open subset $X_{i}\cap X_{j}\subset X_{i}$.
By remark \ref{rem-NQN-Like-Sch-Tempered} we get that $t|_{X_{j}}$
is a restriction to $X_{j}$ of a tempered function $T$ on an open
neighborhood $U$ in which $X_{j}$ is closed. As tempered functions
on Nash manifolds form a sheaf, take the open neighborhood $V\subset U$
of $X_{i}\cap X_{j}$ in which $X_{i}\cap X_{j}$ is closed, and get
that $\hat{t}:=T|_{V}$ is a tempered function, and so, by remark
\ref{rem-NQN-Like-Sch-Tempered} again, we get that $t|_{X_{i}\cap X_{j}}=\hat{t}|_{X_{i}\cap X_{j}}\in\mathcal{T}\left(X_{i}\cap X_{j}\right)$.
By \ref{lem-tempered-sheaf-affine}, these functions can be glued
to a unique tempered function on $X_{i}$ as they form a sheaf. Thus,
we get that $t|_{X_{i}}\in\mathcal{T}\left(X_{i}\right)$.
\end{proof}
\begin{defn}
\label{def-temp-func}Let $X$ be a QN variety. A real valued function
$t:X\to\mathbb{R}$ is called \textit{\textcolor{black}{a tempered
function on $X$}} if it satisfies the equivalent conditions of Lemma
\ref{lem-Tempered-eqiv}. Denote the space of all tempered functions
on $X$ by $\mathcal{T}\left(X\right)$.
\end{defn}

\begin{lem}
\label{lem-sXt-non-affine}Let $X$ be a QN variety, $t\in\mathcal{T}\left(X\right)$
and $s\in\mathcal{S}\left(X\right)$. Then $t\cdot s\in\mathcal{S}\left(X\right)$.
\end{lem}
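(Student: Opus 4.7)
The plan is to reduce everything to the affine case, where we already have the analogous statement as Proposition \ref{prop-sXt}, and then assemble the result using the definition of $\mathcal{S}(X)$ as a sum of extensions from an affine cover.

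First, I would choose a finite open affine QN cover $X=\bigcup_{i=1}^{m}X_{i}$. By Definition \ref{def-Schwartz-on-Gen-QN-Cov-C} (together with Lemma \ref{lem-Sch-cover-indep}, which lets us compute $\mathcal{S}(X)$ via any such cover), we may write
\[
s=\sum_{i=1}^{m}Ext_{X_{i}}^{X}(s_{i}),\qquad s_{i}\in\mathcal{S}(X_{i}).
\]
By Lemma \ref{lem-Tempered-eqiv}, the tempered function $t$ restricts to a tempered function $t|_{X_{i}}\in\mathcal{T}(X_{i})$ on each piece of the cover.

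Next I would handle each affine piece. Fix $i$, and let $\tilde{X}_{i}\subset\mathbb{R}^{n_{i}}$ be the closed NQN set corresponding to $X_{i}$. Under the QN isomorphism $X_{i}\cong\tilde{X}_{i}$, tempered functions and Schwartz functions are transported between $X_{i}$ and $\tilde{X}_{i}$ by pullback (by Lemma \ref{lem-First-Result} for Schwartz functions and the analogous statement for tempered functions noted in the remark after Lemma \ref{lem-NQN-Frechet-iso}). Thus $t|_{X_{i}}$ and $s_{i}$ correspond to a tempered function and a Schwartz function on the NQN set $\tilde{X}_{i}$, and Proposition \ref{prop-sXt} yields that their product is Schwartz on $\tilde{X}_{i}$. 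Pulling back, $t|_{X_{i}}\cdot s_{i}\in\mathcal{S}(X_{i})$.

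Finally I would glue. Since $Ext_{X_{i}}^{X}(s_{i})$ vanishes on $X\setminus X_{i}$, multiplication by $t$ commutes with extension by zero, giving
\[
t\cdot s=\sum_{i=1}^{m}t\cdot Ext_{X_{i}}^{X}(s_{i})=\sum_{i=1}^{m}Ext_{X_{i}}^{X}\bigl(t|_{X_{i}}\cdot s_{i}\bigr).
\]
Each summand on the right is an extension by zero of an element of $\mathcal{S}(X_{i})$, so Definition \ref{def-Schwartz-on-Gen-QN-Cov-C} places the total sum in $\mathcal{S}(X)$. I do not expect any real obstacle here; the only point that requires care is checking that extension by zero of a Schwartz function $Ext_{X_{i}}^{X}(s_{i})$ is indeed zero (and not merely flat) off $X_{i}$, so that the pointwise product with $t$ genuinely agrees with the extension by zero of $t|_{X_{i}}\cdot s_{i}$ — but this is immediate from the fact that $Ext_{X_{i}}^{X}$ is literally extension by zero in the definition.
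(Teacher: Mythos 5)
Your proof is correct and follows essentially the same route as the paper: restrict $t$ to the affine pieces of the cover, write $s$ as a sum of zero-extensions of Schwartz functions $s_i$ on those pieces, invoke Proposition \ref{prop-sXt} to get $t|_{X_i}\cdot s_i \in \mathcal{S}(X_i)$, and reassemble via extension by zero. Your extra care in transporting to the NQN model $\tilde{X}_i$ and in justifying that multiplication commutes with extension by zero is sound but not something the paper spells out.
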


\begin{proof}
Let $X=\bigcup\limits _{i=1}^{k}X_{i}$ be some open affine QN cover
such that $s=\sum\limits _{i=1}^{k}Ext_{X_{i}}^{X}\left(s_{i}\right)$
for some $s_{i}\in\mathcal{S}\left(X_{i}\right)$. Then $t|_{X_{i}}\in\mathcal{T}\left(X_{i}\right)$
and by proposition \ref{prop-sXt} $t|_{X_{i}}\cdot s_{i}\in\mathcal{S}\left(X_{i}\right)$.
Thus, $t\cdot s=\sum\limits _{i=1}^{k}Ext_{X_{i}}^{X}\left(s_{i}\cdot t|_{X_{i}}\right)\in\mathcal{S}\left(X\right)$.
\end{proof}
\begin{prop}
\label{prop-part-of-unity}(tempered partition of unity) - Let $X$
be a QN variety, and let $\left\{ V_{i}\right\} _{i=1}^{m}$ be a
finite open cover of $X$. Then:

(1) There exist tempered functions $\left\{ \alpha_{i}\right\} _{i=1}^{m}$
on $X$, such that $supp\left(\alpha_{i}\right)\subset V_{i}$ and
$\sum\limits _{i=1}^{m}\alpha_{i}=1$.

(2) We can choose $\left\{ \alpha_{i}\right\} _{i=1}^{m}$ in such
a way that for any $\varphi\in\mathcal{S}\left(X\right),\:\left(\alpha_{i}\varphi\right)|_{V_{i}}\in\mathcal{S}\left(V_{i}\right)$.
\end{prop}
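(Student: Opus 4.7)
The plan is to settle the affine case by pulling the Nash partition of unity (Theorem \ref{AG-thm-part-of-unity}) back from an ambient open semi-algebraic neighborhood of $X$, and then to attack the general case by combining the affine partitions with auxiliary tempered cutoff functions subordinate to an affine cover of $X$ (whose construction is the content flagged for Appendix A).

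For the affine case, suppose $X$ corresponds to a closed $\tilde{X}\subset\mathbb{R}^{n}$ sitting inside an open semi-algebraic neighborhood $U\subset\mathbb{R}^{n}$. Each $V_{i}$ lifts to an open semi-algebraic $W_{i}\subset U$ with $W_{i}\cap\tilde{X}=\tilde{V}_{i}$, and adding $W_{0}:=U\setminus\tilde{X}$ yields a Nash open cover of the Nash manifold $U$. Theorem \ref{AG-thm-part-of-unity}(1)(2) furnishes tempered $\hat{\alpha}_{0},\dots,\hat{\alpha}_{m}$ on $U$ with $\mathrm{supp}(\hat{\alpha}_{i})\subset W_{i}$, $\sum\hat{\alpha}_{i}=1$, and $(\hat{\alpha}_{i}\hat{\varphi})|_{W_{i}}\in\mathcal{S}(W_{i})$ for every $\hat{\varphi}\in\mathcal{S}(U)$. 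Restricting to $\tilde{X}$ kills $\hat{\alpha}_{0}$ (its support avoids $\tilde{X}$), so $\alpha_{i}:=\hat{\alpha}_{i}|_{\tilde{X}}$ for $i\geq 1$ are tempered on $X$ by definition, supported in $V_{i}$, and sum to $1$. For property (2), lift $\varphi\in\mathcal{S}(X)$ to $\hat{\varphi}\in\mathcal{S}(U)$ via $\mathcal{S}(X)=\mathcal{S}(U)/I_{Sch}^{U}(\tilde{X})$; then $(\alpha_{i}\varphi)|_{V_{i}}$ is the restriction to $\tilde{V}_{i}$ (closed in $W_{i}$) of the Schwartz function $(\hat{\alpha}_{i}\hat{\varphi})|_{W_{i}}$, which lies in $\mathcal{S}(V_{i})$ by Remark \ref{rem-NQN-Like-Sch-Tempered}.

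For the general case, take an affine QN cover $X=\bigcup_{j=1}^{k}X_{j}$, apply the affine case on each $X_{j}$ with the cover $\{V_{i}\cap X_{j}\}_{i}$ to obtain tempered $\alpha_{ij}$ on $X_{j}$ (supported in $V_{i}\cap X_{j}$, summing to $1$ on $X_{j}$, and satisfying the Schwartz-multiplier property), and then invoke Appendix A to produce tempered $\gamma_{1},\dots,\gamma_{k}$ on $X$ with $\mathrm{supp}(\gamma_{j})\subset X_{j}$, $\sum\gamma_{j}=1$, and $\gamma_{j}$ flat at every point of $X\setminus X_{j}$. The flatness is exactly what lets $\gamma_{j}\alpha_{ij}$ (initially defined only on $X_{j}$) extend by zero to a tempered function on $X$: tested on another chart $X_{j'}$ via Lemma \ref{lem-Tempered-eqiv}, the flatness of $\gamma_{j}$ along $X_{j'}\setminus X_{j}$ absorbs the polynomial growth of $\alpha_{ij}$ near that boundary. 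Setting $\alpha_{i}:=\sum_{j}\gamma_{j}\alpha_{ij}$ gives the tempered partition subordinate to $\{V_{i}\}$, and property (2) descends because $(\gamma_{j}\alpha_{ij}\varphi)|_{V_{i}\cap X_{j}}$ is Schwartz on $V_{i}\cap X_{j}$ (by the affine case together with Lemma \ref{lem-sXt-non-affine}), so summing their extensions by zero across the affine cover $\{V_{i}\cap X_{j}\}_{j}$ of $V_{i}$ produces a Schwartz function on $V_{i}$ directly from the definition of $\mathcal{S}(V_{i})$.

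The main obstacle is the construction of the $\gamma_{j}$'s with the required flatness along $X\setminus X_{j}$: mere support containment in $X_{j}$ is insufficient, since temperedness only controls polynomial growth and allows arbitrary non-smooth behavior at $\partial X_{j}$ after extension by zero. This is precisely the technical content deferred to Appendix A, which I expect builds such cutoffs chart-by-chart by lifting to an ambient Nash structure and pulling back smooth bump functions that are Schwartz (equivalently, $C^{\infty}$-flat) at the complement of each chart.
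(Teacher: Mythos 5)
Your affine case is correct and matches the paper's sketch: lift $\{V_i\}$ to open semi-algebraic $W_i$ in an ambient $U$ in which $\tilde X$ is closed, throw in $W_0 = U\setminus\tilde X$, pull the Nash partition of unity from Theorem \ref{AG-thm-part-of-unity} down to $\tilde X$, and verify (2) via Remark \ref{rem-NQN-Like-Sch-Tempered}. That part is fine.

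The general case has a genuine gap, and it is exactly where you flagged uncertainty. You reduce to the affine case on each chart $X_j$ and then want to globalize via an auxiliary tempered partition $\{\gamma_j\}$ on $X$, subordinate to the \emph{affine} cover $\{X_j\}$, with the extra requirement that each $\gamma_j$ be flat along $X\setminus X_j$. You attribute the construction of these $\gamma_j$ to Appendix A, but Appendix A does not build any such cutoffs. It supplies only supporting machinery: a semi-algebraic metric (Corollary \ref{cor-metric}), existence of a proper refinement (Proposition \ref{prop-proper-ref}), a basis of open sets of the form $M_F$ with $F$ basic (Corollary \ref{cor-basis-Mf}), and lower-bounding by positive QN functions (Lemma \ref{lem-majoration}). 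The actual production of tempered functions that are flat at the boundary — passing to a proper refinement, normalizing the basic functions via majoration, composing with a smooth bump $\rho$ that is identically $0$ near $0$, and normalizing the resulting $\beta_l$'s to $\gamma_l = \beta_l/\sum\beta_l$ — is carried out \emph{inside} the proof of Proposition \ref{prop-part-of-unity}(1) itself, using those Appendix A tools as inputs.

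This means your reduction is not a reduction: producing $\{\gamma_j\}$ subordinate to $\{X_j\}$ with flatness along $X\setminus X_j$ is an instance of the same non-affine tempered-partition-plus-flatness problem you are trying to solve, applied to a different cover. Nothing makes the affine cover $\{X_j\}$ easier than the original cover $\{V_i\}$. The paper avoids this circularity by never factoring through the affine cover as a separate partition step; instead it refines the intersected cover $\{V_i\cap V_j\}$ directly (proper refinement, basic functions, majoration) and builds $\gamma_l := \rho\circ F_l / \sum \rho\circ F_l$ in one pass, reading off $\alpha_i := \sum_{l : i(l)=i}\gamma_l$ at the end. The flatness you correctly identified as essential is supplied automatically by the choice $\rho\bigl((-\infty,0.1]\bigr)=\{0\}$. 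If you want to salvage your two-step structure, you would have to carry out this same bump-function construction for the affine cover $\{X_j\}$ anyway, at which point the per-chart $\alpha_{ij}$'s become superfluous and you are better off running the construction directly on $\{V_i\}$ as the paper does.
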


The proof for the affine case is similar to that in {[}ES - Proposition
3.11{]}, with the corresponding claims here (e.g. Lemma \ref{lem-Sch-Res-to-closed}
replaces Theorem 3.7 in {[}ES{]}). The idea behind this proof is to
take the corresponding set in the Nash manifold $\mathbb{R}^{n}$,
and extend the $V_{i}$'s to some open semi-algebraic sets covering
$\mathbb{R}^{n}$. By Theorem \ref{AG-thm-part-of-unity} there is
a tempered partition of unity on those extending sets, so we can reduce
to our $V_{i}$'s to get the result. For the general case we use claims
in the Appendix as follows:
\begin{proof}
(1) By definition of $X$, there exists an open affine cover $X=\bigcup\limits _{j=1}^{n}V_{j}$.
Thus, for each $i$, there exists an open affine cover $V_{i}=\bigcup\limits _{j=1}^{n}V_{ij}$
where $V_{ij}:=V_{i}\cap V_{j}$. By Proposition \ref{prop-proper-ref}
there exists a cover of $X$ by $\bigcup\limits _{i,j}V_{ij}'=X$
where $V'_{ij}\subset\bar{V}_{ij}'\subset V_{ij}$. By Corollary \ref{cor-basis-Mf},
there exist a finite collection of continuous functions $G_{ijk}:V_{ij}\rightarrow\mathbb{R}$
and open sets $V'_{ijk}$ such that $V'_{ijk}=\left\{ x\in V_{ij}|G_{ijk}\left(x\right)\neq0\right\} ,\:G_{ijk}|_{V'_{ijk}}$
is positive and QN and $\bigcup\limits _{k}V'_{ijk}=V'_{ij}$. It
gives a finite cover of $X$ which is a refinement of $U_{i}$. In
order to have a unified system of indices we denote $V_{ijk}:=V_{ij}$.
We re-index it to one index cover $V_{l}$. By the same re-indexation
we get $G_{l}$ and $V'_{l}$ . Extend $G_{l}$ by zero to a function
$\widetilde{G_{l}}$ on $X$. It is continuous. Denote $G=\left(\sum\widetilde{G_{l}}\right)/\left(2n\right)$
where $n$ is the number of values of the index $l$. Consider $G|_{V_{l}}$
. This is a strictly positive continuous semi-algebraic function on
an affine QN variety. Lemma \ref{lem-majoration} shows that continuous
strictly positive semi-algebraic function on an affine QN variety
can be bounded from below by a strictly positive QN function. Thus,
$G|_{V_{l}}$ can be bounded from below by a strictly positive QN
function $g'_{l}$. Denote $H_{l}:=G_{l}/g'_{l}$. Extending $H_{l}$
by zero outside $V_{l}$ to $X$ we obtain a collection of continuous
semi-algebraic functions $F_{l}$. Note that $F_{l}$ is not smooth.
It is easy to see that $X_{F_{l}}$ is a refinement of $V_{i}$.

Now, let $\rho:\mathbb{R}\rightarrow[0,1]$ be a smooth function such
that 
\[
\rho\left((-\infty,0.1]\right)=\left\{ 0\right\} ,\:\rho\left([1,\infty)\right)=\left\{ 1\right\} 
\]
Denote $\beta_{l}:=\rho\circ F_{l}$ and $\gamma_{l}=\frac{\beta_{l}}{\sum\beta_{l}}$.
It is easy to see that $\gamma_{l}$ are tempered. For every $l$
we choose $i\left(l\right)$ such that $X_{F_{l}}\subset U_{i\left(l\right)}$.
Define $\alpha_{i}:=\sum\limits _{l|i\left(l\right)=i}\gamma_{l}$.
It is easy to see that $\alpha_{i}$ is a tempered partition of unity.

(2) Take some $\varphi\in\mathcal{S}\left(X\right)$. By definition
$\varphi=\sum\limits _{j=1}^{n}Ext_{X_{j}}^{X}\varphi_{j}$ where
the $X_{j}$ 's are the affine open cover of $X$ and $\varphi_{j}\in\mathcal{S}\left(X_{j}\right)$.
By definition, $\alpha_{i}|_{X_{j}}\in\mathcal{T}\left(X_{j}\right)$,
and by Lemma \ref{prop-sXt} $\varphi_{j}\cdot\alpha_{i}|_{X_{j}}\in\mathcal{S}\left(X_{j}\right)$.
By part (1), $supp\left(\alpha_{i}\right)\subset V_{i}$ so $supp\left(\varphi_{j}\cdot\alpha_{i}|_{X_{j}}\right)\subset V_{i}\cap X_{j}$
and $\varphi_{j}\cdot\alpha_{i}|_{X_{j}}$ is flat on $X_{j}\backslash V_{i}$.
By Theorm \ref{thm-affine-char}, $\left(\varphi_{j}\cdot\alpha_{i}|_{X_{j}}\right)|_{V_{i}\cap X_{j}}\in\mathcal{S}\left(V_{i}\cap X_{j}\right)$.
Note that $V_{i}\cap X_{j}$ is an open affine QN cover of $V_{i}$,
so 
\[
\sum\limits _{j=1}^{n}Ext_{V_{i}\cap X_{j}}^{V_{i}}\left(\varphi_{j}\cdot\alpha_{i}|_{X_{j}}\right)|_{V_{i}\cap X_{j}}\in\mathcal{S}\left(V_{i}\right).
\]
Finally, by the definition of the $\varphi_{j}$'s we get that $\sum\limits _{j=1}^{n}Ext_{V_{i}\cap X_{j}}^{V_{i}}\left(\varphi_{j}\cdot\alpha_{i}|_{X_{j}}\right)|_{V_{i}\cap X_{j}}=\left(\varphi\cdot\alpha_{i}\right)|_{V_{i}}$.
\end{proof}
\begin{rem}
\label{rem-temp-Sch-is-Sch}Actually, what we proved in part (2) is
that for any tempered function $\beta\in\mathcal{T}\left(X\right)$
whose support is a subset of some open subset $U$, i.e. $supp\left(\beta\right)\subset U,\:U\subset X$,
and for any $\varphi\in\mathcal{S}\left(X\right)$ we get $\left(\beta\cdot\varphi\right)|_{U}\in\mathcal{S}\left(U\right)$.
\end{rem}

\begin{defn}
Let $X$ be a QN variety. A function $f:X\to\mathbb{R}$ is called
\textbf{flat} at $p\in X$ if there exists an open affine QN subset
$U\subset X$ such that $p\in U$ and $f|_{U}$ is flat at $p$. If
$f$ is flat at all $p\in Z$ for some $Z\subset X$, $f$ is called
\textbf{flat} at $Z$.
\end{defn}

\begin{prop}
\label{prop-ext-by-0}(extension by zero for non affine varieties).
Let $X$ be a QN variety, and $U$ an open subset of $X$. Then the
extension by zero to $X$ of a Schwartz function on $U$ is a Schwartz
function on $X$, which is flat at $X\backslash U$.
\begin{proof}
Take an open affine QN cover $X=\bigcup\limits _{i=1}^{k}X_{i}$.
Then $U=\bigcup\limits _{i=1}^{k}\left(U\cap X_{i}\right)$ is an
open affine QN cover of $U$, what makes $U$ a QN variety as well.
Take some $s\in\mathcal{S}\left(U\right)$. So $s=\sum\limits _{i=1}^{k}Ext_{U\cap X_{i}}^{U}\left(s_{i}\right)$
for some $s_{i}\in\mathcal{S}\left(U\cap X_{i}\right)$. The set $U_{i}:=U\cap X_{i}$
is open in $X_{i}$, so take the closed corresponding set $\tilde{X}$
in $\mathbb{R}^{n_{i}}$, and its corresponding open subset $\tilde{U}_{i}$.
Then there exists an open semi-algebraic set $V_{i}\subset\mathbb{R}^{n}$
such that $\tilde{U}_{i}=V_{i}\cap\tilde{X}_{i}$. By remark \ref{rem-NQN-Like-Sch-Tempered},
$s_{i}$ corresponds to some $\tilde{s}_{i}=S_{i}|_{\tilde{U}_{i}}$where
$S_{i}\in\mathcal{S}\left(V_{i}\right)$. By Theorem \ref{AG-thm-Char-Nash},
$S_{i}$ can be extended by zero to the whole of $\mathbb{R}^{n_{i}}$
and this extension is flat outside $V_{i}$, and in particular in
$\tilde{X}_{i}\backslash\tilde{U}_{i}$. This extension can be reduced
to $\tilde{X}_{i}$ to make a Schwartz function on $\tilde{X}_{i}$
which extends $\tilde{s}_{i}$ to $\tilde{X}_{i}$ by zero, and thus
is flat on $\tilde{X}_{i}\backslash\tilde{U}_{i}$. Thus, $s_{i}$
can be extended to a Schwartz function on $X_{i}$ which is flat on
$X_{i}\backslash U_{i}$ . So $Ext_{U}^{X}\left(s\right)=Ext_{U}^{X}\left(\sum\limits _{i=1}^{k}Ext_{U_{i}}^{U}\left(s_{i}\right)\right)=\sum\limits _{i=1}^{k}Ext_{U}^{X}\left(Ext_{U_{i}}^{U}\left(s_{i}\right)\right)=\sum\limits _{i=1}^{k}Ext_{X_{i}}^{X}\left(Ext_{U_{i}}^{X_{i}}\left(s_{i}\right)\right)$
which, by definition, is a Schwartz function on $X$ which is flat
on $X\backslash U$.
\end{proof}
\end{prop}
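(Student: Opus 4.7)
The plan is to reduce everything to the affine case, which is already handled by Lemma \ref{lem-ext-by-0-affine}, and then glue using the definition of Schwartz functions on a general QN variety (Definition \ref{def-Schwartz-on-Gen-QN-Cov-C}) together with cover-independence (Lemma \ref{lem-Sch-cover-indep}).

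First I would fix an open affine QN cover $X=\bigcup_{i=1}^{k}X_{i}$. Then $U_{i}:=U\cap X_{i}$ is open in the affine QN variety $X_{i}$, and the collection $\{U_{i}\}_{i=1}^{k}$ is an open affine QN cover of $U$, so $U$ itself is a QN variety. Given $s\in\mathcal{S}(U)$, apply Definition \ref{def-Schwartz-on-Gen-QN-Cov-C} (combined with Lemma \ref{lem-Sch-cover-indep} to make sure we may use this particular cover) to write $s=\sum_{i=1}^{k}Ext_{U_{i}}^{U}(s_{i})$ with $s_{i}\in\mathcal{S}(U_{i})$.

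Next I would apply Lemma \ref{lem-ext-by-0-affine} to each pair $(X_{i},U_{i})$: this produces, for each $i$, a function $\tilde{s}_{i}\in\mathcal{S}(X_{i})$ such that $\tilde{s}_{i}|_{U_{i}}=s_{i}$ and $\tilde{s}_{i}$ is flat on $X_{i}\setminus U_{i}$; equivalently, $\tilde{s}_{i}=Ext_{U_{i}}^{X_{i}}(s_{i})$. The routine observation that extension by zero commutes with restricting to sub-covers then yields
\[
Ext_{U}^{X}(s)\;=\;\sum_{i=1}^{k}Ext_{X_{i}}^{X}\!\left(Ext_{U_{i}}^{X_{i}}(s_{i})\right)\;=\;\sum_{i=1}^{k}Ext_{X_{i}}^{X}(\tilde{s}_{i}),
\]
which lies in $\mathcal{S}(X)$ directly by Definition \ref{def-Schwartz-on-Gen-QN-Cov-C}.

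It remains to verify flatness on $X\setminus U$. By the definition of flatness on a QN variety, I need to show that every $p\in X\setminus U$ has an open affine QN neighborhood on which $Ext_{U}^{X}(s)$ is flat at $p$. For such a $p$, pick any index $i$ with $p\in X_{i}$; then $p\in X_{i}\setminus U_{i}$. The summands $Ext_{X_{j}}^{X}(\tilde{s}_{j})$ for $j\ne i$ vanish on the open set $X_{i}\setminus\overline{U_{j}\cap X_{i}}$, so after possibly shrinking to a smaller open affine neighborhood of $p$ inside $X_{i}$ where all the other summands either vanish or remain affine-extensions of flat-at-$p$ functions, the restriction of $Ext_{U}^{X}(s)$ equals a finite sum of functions each flat at $p$ (each $\tilde{s}_{j}$ is flat on $X_{j}\setminus U_{j}$ by Lemma \ref{lem-ext-by-0-affine}). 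I expect this bookkeeping \textemdash{} verifying flatness cleanly in spite of overlaps between the $X_{i}$'s \textemdash{} to be the only mildly delicate point; everything else is a direct assembly from the affine case.
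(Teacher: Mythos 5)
Your proposal takes essentially the same route as the paper: decompose $s$ over the affine cover, extend each piece to the affine chart, reassemble via Definition \ref{def-Schwartz-on-Gen-QN-Cov-C}, and then argue flatness pointwise. The one genuine improvement is that you invoke Lemma \ref{lem-ext-by-0-affine} directly to produce $\tilde{s}_i = Ext_{U_i}^{X_i}(s_i)\in\mathcal{S}(X_i)$ flat on $X_i\setminus U_i$, whereas the paper re-derives that extension inline by passing to the ambient $\mathbb{R}^{n_i}$ via Remark \ref{rem-NQN-Like-Sch-Tempered} and Theorem \ref{AG-thm-Char-Nash}. Your version is more modular, and the identity $Ext_{U}^{X}(s)=\sum_i Ext_{X_i}^{X}(\tilde{s}_i)$ and the resulting membership in $\mathcal{S}(X)$ are handled identically in both treatments.

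The "mildly delicate point" you flag is a real one, and your bookkeeping does not quite close it. You split the summands for $j\ne i$ into those that vanish near $p$ and those that are "affine-extensions of flat-at-$p$ functions." The latter alternative is only justified when $p\in X_j$, since then $p\in X_j\setminus U_j$ and $\tilde{s}_j$ is flat at $p$ by Lemma \ref{lem-ext-by-0-affine}. But the case $p\notin X_j$ with $p\in\overline{U_j}$ is left open: there $Ext_{X_j}^{X}(\tilde{s}_j)$ is non-zero arbitrarily close to $p$ (inside $X_j\cap X_i$), and the flatness of $\tilde{s}_j$ on $X_j\setminus U_j$ says nothing at $p$ because $p$ lies outside $X_j$ entirely. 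Handling this case amounts to showing that $\tilde{s}_j$, extended by zero across the boundary of $X_j$ inside the affine chart $X_i$, is flat at $p$ \textemdash{} morally another instance of the affine extension-by-zero statement, but for the new pair $\left(X_i,\;X_j\cap X_i\right)$, and it requires first arguing that $\tilde{s}_j|_{X_j\cap X_i}$ (equivalently $s_j|_{U_j\cap X_i}$) is Schwartz on that open subset, which is not automatic from restriction alone and typically needs a partition-of-unity step. To be fair, the paper's own proof elides this entirely, asserting flatness "by definition," so you have not introduced a new error; but since you single this out as the only delicate point, you should either fill it in (e.g.\ reduce via Proposition \ref{prop-part-of-unity} or Theorem \ref{AG-thm-Char-Nash} in the chart $X_i$) or at least make explicit that an additional affine-case reduction is being invoked there.
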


\begin{thm}
\label{thm-general-char}(Characterization of Schwartz functions on
open subset - the general case) Let $X$ be a QN variety, and let
$Z\subset X$ be some closed subset. Define $U:=X\backslash Z$ and
$W_{Z}:=\left\{ \phi\in\mathcal{S}\left(X\right)|\phi\text{ is flat on }Z\right\} $.
Then $W_{Z}$ is a closed subspace of $\mathcal{S}\left(X\right)$,
and thus it is a Fréchet space. Furthermore, extension by zero $Ext_{U}^{X}:\mathcal{S}\left(U\right)\to W_{Z}$
is an isomorphism of Fréchet spaces, whose inverse is $Res_{U}^{X}:W_{Z}\to\mathcal{S}\left(U\right)$.
\end{thm}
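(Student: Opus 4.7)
The plan is to bootstrap from the affine case (Theorem \ref{thm-affine-char}) to the general case via the tempered partition of unity (Proposition \ref{prop-part-of-unity}) subordinate to a finite open affine cover $X=\bigcup_{i=1}^{m}X_{i}$. The extension direction $Ext_{U}^{X}:\mathcal{S}(U)\to W_{Z}$ is already supplied by Proposition \ref{prop-ext-by-0}, so what remains is (a) closedness of $W_{Z}$ in $\mathcal{S}(X)$, (b) that restriction of an element of $W_{Z}$ to $U$ is Schwartz, and (c) continuity of both maps, at which point Theorem \ref{thm-Banach-open-mapping} upgrades the continuous bijection to an isomorphism of Fr\'echet spaces.

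For (a), I would show that flatness at a single point $p\in Z$ is a closed condition on $\mathcal{S}(X)$. Fixing any open affine $X_{i}\ni p$ and a tempered bump $\beta_{p}$ with $supp(\beta_{p})\subset X_{i}$ and $\beta_{p}\equiv1$ near $p$, the map $\phi\mapsto(\beta_{p}\phi)|_{X_{i}}$ is a continuous linear map $\mathcal{S}(X)\to\mathcal{S}(X_{i})$ (by Lemma \ref{lem-sXt-non-affine} and Remark \ref{rem-temp-Sch-is-Sch}), and $\phi$ is flat at $p$ iff its image is flat at $p$ in $\mathcal{S}(X_{i})$. The latter is the vanishing of all Taylor coefficients of a smooth extension at $p$, a closed condition in the Fr\'echet space $\mathcal{S}(X_{i})$. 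Hence $W_{Z}=\bigcap_{p\in Z}\{\phi\mid \phi\text{ flat at }p\}$ is closed, and so Fr\'echet by Proposition \ref{prop-Closed-Frechet-subspace}.

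For (b), given $\phi\in W_{Z}$, take the tempered partition of unity $\{\alpha_{i}\}$ furnished by Proposition \ref{prop-part-of-unity}(2), so that each $(\alpha_{i}\phi)|_{X_{i}}$ lies in $\mathcal{S}(X_{i})$. Because $\phi$ is flat on $Z$ and $\alpha_{i}$ is smooth, Leibniz's rule forces $(\alpha_{i}\phi)|_{X_{i}}$ to be flat on $Z\cap X_{i}$. The affine characterization (Theorem \ref{thm-affine-char}) applied on $X_{i}$ therefore yields $(\alpha_{i}\phi)|_{X_{i}\cap U}\in\mathcal{S}(X_{i}\cap U)$, and summing the extensions by zero from $X_{i}\cap U$ to $U$, which is permitted since $\{X_{i}\cap U\}$ is an open affine QN cover of $U$ and by cover-independence of $\mathcal{S}(U)$ (Lemma \ref{lem-Sch-cover-indep}), gives $\phi|_{U}=\sum_{i}Ext_{X_{i}\cap U}^{U}\bigl((\alpha_{i}\phi)|_{X_{i}\cap U}\bigr)\in\mathcal{S}(U)$. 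The mutual-inverse identities $Ext_{U}^{X}\circ Res_{U}^{X}=Id_{W_{Z}}$ and $Res_{U}^{X}\circ Ext_{U}^{X}=Id_{\mathcal{S}(U)}$ hold set-theoretically, the first using that $\phi\in W_{Z}$ vanishes on $Z$.

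For (c), continuity of $Ext_{U}^{X}$ follows by applying the continuous affine extension (Theorem \ref{thm-affine-char} on each $X_{i}$) componentwise in the direct-sum presentation $\mathcal{S}(U)=\psi\bigl(\bigoplus_{i}\mathcal{S}(X_{i}\cap U)\bigr)$ and then including each summand into the corresponding summand of $\mathcal{S}(X)$. Continuity of $Res_{U}^{X}$ is read off from the factorization in (b): multiplication by the tempered $\alpha_{i}$, restriction to the affine $X_{i}$, the continuous affine restriction $W_{Z\cap X_{i}}\to\mathcal{S}(X_{i}\cap U)$, extension by zero to $U$, and finite summation are each continuous. I expect the main obstacle to be precisely this continuity bookkeeping through the quotient topologies on $\mathcal{S}(X)$ and $\mathcal{S}(U)$; the substantive geometric content, namely that flatness along $Z$ characterizes Schwartz extensions, has already been absorbed in the affine case where the subanalytic-geometry Whitney extension (Lemma \ref{lem-subanalytic-ext}) does the work.
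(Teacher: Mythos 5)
Your proposal follows essentially the same route as the paper's proof: bootstrap the affine characterization (Theorem \ref{thm-affine-char}) through the tempered partition of unity (Proposition \ref{prop-part-of-unity}) subordinate to a finite affine cover, prove $\mathrm{Ext}_U^X(\mathcal{S}(U))\subset W_Z$ via Proposition \ref{prop-ext-by-0}, show $\mathrm{Res}_U^X(W_Z)\subset\mathcal{S}(U)$ by noting each $\alpha_i\phi$ remains flat on $Z\cap X_i$, and finish with Theorem \ref{thm-Banach-open-mapping}. Your step (a) is somewhat more explicit than the paper (which merely asserts closedness of each pointwise flatness condition), and your step (c) is mildly redundant in arguing continuity of $\mathrm{Res}_U^X$ directly rather than leaving it to Banach open mapping, but neither change alters the argument.
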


\begin{proof}
As for the first part, $W_{Z}=\bigcap\limits _{z\in Z}\left\{ \phi\in\mathcal{S}\left(X\right)|\phi\text{ is flat on }z\right\} $
is an intersection of closed sets, it is a closed subspace of $\mathcal{S}\left(X\right)$,
and thus a Fréchet space. For the second part, by Proposition \ref{prop-ext-by-0}
the extension of a function in $\mathcal{S}\left(U\right)$ by zero
to $X$ is a function in $\mathcal{S}\left(X\right)$ that is flat
at $Z$, i.e. $Ext_{U}^{X}\left(\mathcal{S}\left(U\right)\right)\subset W_{Z}$.
Furthermore, we will claim further on that $Ext_{U}^{X}$ is continuous.
Before that, we will show the opposite direction - that $Res_{U}^{X}\left(W_{Z}\right)\subset\mathcal{S}\left(U\right)$.
Let $f\in W_{Z}$. We want to show $f|_{U}=\sum\limits _{i=1}^{n}Ext_{U_{i}}^{U}s_{i}$
where $X=\bigcup\limits _{i=1}^{n}X_{i}$ is an open affine QN cover
of $X$, $U_{i}:=U\cap X_{i}$, (this makes $U=\bigcup\limits _{i=1}^{n}U_{i}$
an open affine cover of $U$) and $s_{i}\in\mathcal{S}\left(U_{i}\right)$.
Note that $s_{i}\neq f|_{U_{i}}$. As $f\in\mathcal{S}\left(X\right)$,
and as $X=\bigcup\limits _{i=1}^{n}X_{i}$ is an affine QN open cover
of $X$, we can use Proposition \ref{prop-part-of-unity} to get tempered
functions $\alpha_{i}$ such that $\alpha_{i}\cdot f\in\mathcal{S}\left(X_{i}\right)$.
As $f\in W_{Z}$ is flat at any $x\in Z$, $\alpha_{i}\cdot f$ is
flat at any $x\in Z_{i}:=Z\cap X_{i}$. Thus, $\alpha_{i}\cdot f\in W_{Z_{i}}$,
and by Theorem \ref{thm-affine-char} we get $\alpha_{i}\cdot f\in\mathcal{S}\left(U_{i}\right)$
where $U_{i}:=U\cap X_{i}$. $\bigcup\limits _{i=1}^{n}U_{i}$ is
an open affine QN cover of $U$, so $\sum\limits _{i=1}^{n}Ext_{U_{i}}^{U}\left(\alpha_{i}\cdot f\right)\in\mathcal{S}\left(U\right)$.
Furthermore, by the definition of the $\alpha$'s, $\sum\limits _{i=1}^{n}Ext_{U_{i}}^{U}\left(\alpha_{i}\cdot f\right)=f|_{U}$.\textcolor{red}{{} }

By Theorem \ref{thm-affine-char}, for any $i=1,...,n$ we have the
continuous map $Ext_{U_{i}}^{X_{i}}:\mathcal{S}\left(U_{i}\right)\rightarrow\mathcal{S}\left(X_{i}\right)$.
As $n<\infty$, we get the continuous map $Ext_{U}^{X}:\bigoplus\limits _{i=1}^{n}\mathcal{S}\left(U_{i}\right)\rightarrow\bigoplus\limits _{i=1}^{n}\mathcal{S}\left(X_{i}\right)$.
Recall that $\mathcal{S}\left(X\right):=\psi\left(\bigoplus\limits _{i=1}^{n}\mathcal{S}\left(X_{i}\right)\right)\cong\bigoplus\limits _{i=1}^{n}\mathcal{S}\left(X_{i}\right)/Ker\left(\psi|_{\bigoplus\limits _{i=1}^{n}\mathcal{S}\left(X_{i}\right)}\right)$
and get the continuous map $Ext_{U}^{X}:\mathcal{S}\left(U\right)\rightarrow\mathcal{S}\left(X\right)$.
As this map is bijective, we get by the Theorem \ref{thm-Banach-open-mapping}
$Ext_{U}^{X}$ is an isomorphism of Fréchet spaces.
\end{proof}
\begin{defn}
Let $X$ be a QN variety. Define the \textbf{space of tempered distributions
on $\boldsymbol{X}$} as the space of continuous linear functionals
on $\mathcal{S}\left(X\right)$. Denote this space by $\mathcal{S}^{*}\left(X\right)$.
\end{defn}

\begin{thm}
\label{thm-temp-dist-onto-general}Let $X$ be a QN variety, and let
$U\subset X$ be some semi-algebraic open subset. Then $Ext_{U}^{X}:\mathcal{S}\left(U\right)\hookrightarrow\mathcal{S}\left(X\right)$
is a closed embedding, and the restriction morphism $\mathcal{S}^{*}\left(X\right)\rightarrow\mathcal{S}^{*}\left(U\right)$
is onto.
\end{thm}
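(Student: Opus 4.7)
The plan is to deduce both statements almost immediately from Theorem \ref{thm-general-char}, combined with the Hahn--Banach theorem (Theorem \ref{thm-Hahn-Banach}). Set $Z := X \setminus U$, which is semi-algebraic and closed, and consider the subspace
\[
W_Z = \{\phi \in \mathcal{S}(X) \mid \phi \text{ is flat on } Z\} \subset \mathcal{S}(X).
\]
By Theorem \ref{thm-general-char}, $W_Z$ is a closed subspace of $\mathcal{S}(X)$ (hence Fr\'echet) and $Ext_U^X : \mathcal{S}(U) \to W_Z$ is an isomorphism of Fr\'echet spaces.

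First I would prove the closed embedding statement: the map $Ext_U^X : \mathcal{S}(U) \to \mathcal{S}(X)$ factors as the isomorphism $\mathcal{S}(U) \xrightarrow{\sim} W_Z$ followed by the closed inclusion $W_Z \hookrightarrow \mathcal{S}(X)$. A closed inclusion composed with an isomorphism onto that closed subspace is itself a closed embedding, which gives the first claim.

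Next I would prove surjectivity of $\mathcal{S}^*(X) \to \mathcal{S}^*(U)$. Let $\xi \in \mathcal{S}^*(U)$. Transport $\xi$ through the topological isomorphism $Ext_U^X : \mathcal{S}(U) \xrightarrow{\sim} W_Z$ to obtain a continuous linear functional $\tilde{\xi} := \xi \circ (Ext_U^X)^{-1}$ on $W_Z$. Since $W_Z$ is a closed subspace of the Fr\'echet space $\mathcal{S}(X)$, Theorem \ref{thm-Hahn-Banach} yields a continuous linear functional $\Xi \in \mathcal{S}^*(X)$ with $\Xi|_{W_Z} = \tilde{\xi}$. By construction, the restriction of $\Xi$ via $Ext_U^X$ satisfies $\Xi \circ Ext_U^X = \tilde{\xi} \circ Ext_U^X = \xi$, so $\Xi$ maps to $\xi$ under the restriction morphism $\mathcal{S}^*(X) \to \mathcal{S}^*(U)$.

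There is essentially no obstacle here; the substantive content lies entirely in Theorem \ref{thm-general-char} (which identifies $\mathcal{S}(U)$ with the closed subspace $W_Z$ via extension by zero), and once that identification is in place both claims are formal consequences: the first from the definition of a closed embedding, and the second from Hahn--Banach applied to a closed subspace of a Fr\'echet space.
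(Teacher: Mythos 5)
Your proposal is correct and follows exactly the same route as the paper: identify $\mathcal{S}(U)$ with the closed subspace $W_Z$ via Theorem \ref{thm-general-char}, which gives the closed embedding, and then apply Hahn--Banach (Theorem \ref{thm-Hahn-Banach}) to extend functionals from $W_Z$ to $\mathcal{S}(X)$.
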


\begin{proof}
As $W_{Z}=\bigcap\limits _{z\in Z}\left\{ \phi\in\mathcal{S}\left(X\right)|\phi\text{ is flat on }z\right\} $
is an intersection of closed subsets, it is a closed subspace of $\mathcal{S}\left(X\right)$
and thus, by Theorem \ref{thm-general-char} the first part is proved.
The second part follows from the fact that $\mathcal{S}\left(X\right)$
is a Fréchet space and from Theorem \ref{thm-Hahn-Banach}.
\end{proof}

\section{\textbf{Sheaves and Cosheaves}}

The aim of this section is to prove that tempered functions and tempered
distributions form sheaves and that Schwartz functions form a cosheaf.
Unlike the algebraic case, this can be done to both the affine and
the general case. The proofs for the affine cases, and for the general
case of tempered functions, are the same as the those in the {[}ES{]}
regarding algebraic varieties. Thus we give short sketches of the
proofs together with relevant statements in this paper replacing statements
in {[}ES{]}.

First, let us recall Lemma \ref{lem-tempered-sheaf-affine}:
\begin{lem}
Let $X$ be an affine QN variety. The assignment of the space of tempered
functions to any open $V\subset X$, together with the restriction
of functions, form a sheaf on $X$.
\end{lem}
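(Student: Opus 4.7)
The plan is to verify the two sheaf axioms for $\mathcal{T}$ on an affine QN variety $X$ by reducing to the ambient Nash manifold structure already available. Write $\tilde{X} \subset U \subset \mathbb{R}^n$ for a closed presentation of $X$ inside an open semi-algebraic neighborhood $U$, and recall from Remark \ref{rem-NQN-Like-Sch-Tempered} that $\mathcal{T}(V) = \mathcal{T}(\hat{V})|_V$ for any open $V \subset \tilde{X}$ and any open $\hat{V} \subset U$ with $V = \hat{V} \cap \tilde{X}$. Since tempered functions on Nash manifolds form a sheaf (Proposition \ref{AG-prop-aff-temp-sheaf}), all the sheaf-theoretic work will take place upstairs in $U$.

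First I would handle the presheaf property: given $t \in \mathcal{T}(V)$ and an open $V' \subset V$, choose a lift $\hat{t} \in \mathcal{T}(\hat{V})$ and an open $\hat{V}' \subset \hat{V}$ with $V' = \hat{V}' \cap \tilde{X}$. Then $\hat{t}|_{\hat{V}'}$ is tempered by the sheaf property upstairs, and its restriction to $V'$ equals $t|_{V'}$, so the restriction map is well-defined. Compatibility of double restrictions is automatic.

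The substantial step is gluing. Given an open cover $V = \bigcup_{i=1}^m V_i$ and tempered functions $f_i \in \mathcal{T}(V_i)$ that agree on overlaps, lift each $f_i$ to $\hat{f}_i \in \mathcal{T}(\hat{V}_i)$ where $\hat{V}_i \subset U$ is an open neighborhood with $V_i = \hat{V}_i \cap \tilde{X}$. The $\hat{f}_i$'s will generally fail to agree on $\hat{V}_i \cap \hat{V}_j$ off of $\tilde{X}$, so we cannot simply glue them upstairs. Instead, apply the tempered partition of unity on the Nash manifold $\hat{V} := \bigcup \hat{V}_i$ (Theorem \ref{AG-thm-part-of-unity}) subordinate to $\{\hat{V}_i\}$, obtaining $\{\alpha_i\} \subset \mathcal{T}(\hat{V})$ with $\mathrm{supp}(\alpha_i) \subset \hat{V}_i$ and $\sum \alpha_i = 1$. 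Form $\hat{f} := \sum_{i=1}^m \alpha_i \hat{f}_i$ (with each $\alpha_i \hat{f}_i$ extended by zero from $\hat{V}_i$ to $\hat{V}$); this is tempered on $\hat{V}$, hence its restriction $f := \hat{f}|_{V}$ lies in $\mathcal{T}(V)$.

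The main obstacle, and what I would check carefully, is that $f|_{V_k} = f_k$ for every $k$. For $x \in V_k$, the compatibility hypothesis gives $f_i(x) = f_k(x)$ for every $i$ with $x \in V_i$, while $\alpha_i(x) = 0$ whenever $x \notin \hat{V}_i$ (so in particular when $x \notin V_i$); using $\sum_i \alpha_i(x) = 1$, one concludes $\hat{f}(x) = f_k(x)$. Uniqueness is immediate since a continuous function on $V$ is determined pointwise. This mirrors the corresponding argument in \textbf{[ES, Proposition 4.3]}, with Remark \ref{rem-NQN-Like-Sch-Tempered} playing the role of the analogous restriction statement in the algebraic setting.
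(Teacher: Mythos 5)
Your proposal follows essentially the same route as the paper's proof: verify the presheaf property via Remark \ref{rem-NQN-Like-Sch-Tempered} and the sheaf property on Nash manifolds, then for gluing lift each $f_i$ to a tempered function on an ambient open semi-algebraic neighborhood, assemble with the tempered partition of unity from Theorem \ref{AG-thm-part-of-unity}, and restrict back to $\tilde{X}$. The one place where the paper's sketch is slightly more explicit is that it flags the need to verify that the glued function $\hat{f}=\sum\alpha_i\hat{f}_i$ is itself tempered on $\bigcup\hat{V}_i$ (by checking $\hat{f}|_{\hat{V}_i}\in\mathcal{T}(\hat{V}_i)$ for each $i$), whereas you assert this in passing; it is true, but the extension by zero of $\alpha_i\hat{f}_i$ being tempered relies on $\mathrm{supp}(\alpha_i)\subset\hat{V}_i$ and the structure of tempered functions on open Nash submanifolds, and deserves the line of justification the paper gestures at.
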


\begin{cor}
\label{cor-tempered-sheaf}Let $X$ be a QN variety. The assignment
of the space of tempered functions to any open $U\subset X$, together
with the restriction of functions, form a sheaf on $X$.
\end{cor}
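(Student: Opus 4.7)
The plan is to bootstrap from the affine case (Lemma \ref{lem-tempered-sheaf-affine}) using the local characterization of tempered functions on a general QN variety (Lemma \ref{lem-Tempered-eqiv} and Definition \ref{def-temp-func}). There are two things to check: that restriction to an open subset preserves temperedness (so we really have a presheaf), and that the collection satisfies the gluing axiom. In both steps the key move will be to pass to an open affine QN cover and invoke the already-established affine result.

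First I would verify the presheaf property. Let $U\subset X$ be open, $t\in\mathcal{T}(U)$, and $W\subset U$ an open subset. Cover $U$ by open affine QN subvarieties $\{U_i\}$; by Lemma \ref{lem-Tempered-eqiv} we may assume $t|_{U_i}\in\mathcal{T}(U_i)$ for every $i$. Choose, for each $i$, an open affine QN cover $\{U_{ij}\}_j$ of $W\cap U_i$ inside $U_i$ (this exists because $W\cap U_i$ is open in the affine QN variety $U_i$, hence itself a QN variety). On each $U_{ij}$, the function $t|_{U_{ij}}$ is the restriction of the tempered function $t|_{U_i}$ to an open subset of the affine QN variety $U_i$, hence tempered by Lemma \ref{lem-tempered-sheaf-affine}. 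Thus $\{U_{ij}\}_{i,j}$ is an open affine QN cover of $W$ on which $t|_W$ is piecewise tempered, and by Lemma \ref{lem-Tempered-eqiv} this gives $t|_W\in\mathcal{T}(W)$.

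Next I would verify gluing. Let $U=\bigcup_{k} V_k$ be an open cover and suppose $t_k\in\mathcal{T}(V_k)$ agree on pairwise intersections. Define $t\colon U\to\mathbb{R}$ by $t|_{V_k}=t_k$; this is well-defined and it suffices to show $t\in\mathcal{T}(U)$. Pick an open affine QN cover $\{U_i\}$ of $U$. For each fixed $i$, the sets $\{U_i\cap V_k\}_k$ form an open cover of the affine QN variety $U_i$, and on each $U_i\cap V_k$ the restriction $t|_{U_i\cap V_k}=t_k|_{U_i\cap V_k}$ is tempered by the presheaf step just established. Since these restrictions agree on overlaps (they are restrictions of the compatible family $\{t_k\}$), Lemma \ref{lem-tempered-sheaf-affine} applied on $U_i$ glues them to a tempered function on $U_i$, which must coincide with $t|_{U_i}$. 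Hence $t|_{U_i}\in\mathcal{T}(U_i)$ for every $i$, and Definition \ref{def-temp-func} yields $t\in\mathcal{T}(U)$.

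The only potential obstacle is purely bookkeeping: making sure that when I refine covers I stay inside the affine QN category so that Lemma \ref{lem-tempered-sheaf-affine} applies, and that the equivalence of conditions in Lemma \ref{lem-Tempered-eqiv} lets me freely switch between "there exists a cover" and "for every cover." No new analytic content is needed; the corollary is a formal localization of the affine sheaf property.
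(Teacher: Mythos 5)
Your proof is correct and follows the same strategy as the paper: reduce to an open affine QN cover, invoke the affine sheaf property of Lemma \ref{lem-tempered-sheaf-affine}, and conclude via the cover-independence of Lemma \ref{lem-Tempered-eqiv} / Definition \ref{def-temp-func}. The one organizational difference is that the paper reduces gluing to a two-element cover by induction, whereas you apply the affine sheaf property directly to the full finite cover $\{U_i\cap V_k\}_k$ of each affine chart $U_i$, which is a bit cleaner since Lemma \ref{lem-tempered-sheaf-affine} already handles arbitrary finite covers of an affine QN variety.
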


\begin{proof}
The proof follows the proof of {[}ES - Proposition 5.11{]}. By the
definition of tempered functions on QN varieties and by Lemma \ref{lem-tempered-sheaf-affine},
they form a presheaf. Using induction on the number of the covering
open subsets, it is enough to show the following: 

Let $X$ be a QN variety and let $U_{1}\cup U_{2}=X$ be an open cover
of $X$. Assume we are given $t_{i}\in\mathcal{T}\left(U_{i}\right)$
such that \textcolor{black}{$t_{1}|_{U_{1}\cap U_{2}}=t_{2}|_{U_{1}\cap U_{2}}$.
Then, there exists a unique function $t\in\mathcal{T}\left(U_{1}\cap U_{2}\right)$
such that $t|_{U_{i}}=t_{i}$. }

The existence of a function $t:U_{1}\cup U_{2}\to\mathbb{R}$ such
that $t|_{U_{i}}=t_{i}$ is clear. We shall now show it is tempered.
Consider some affine open cover $X=\bigcup\limits _{j=1}^{k}X_{j}$.
Then $U_{i}=\bigcup\limits _{j=1}^{k}\left(U_{i}\cap X_{j}\right)$
is an affine open cover of $U_{i}$, and $U_{1}\cup U_{2}=\bigcup\limits _{j=1}^{k}\left(\left(U_{1}\cup U_{2}\right)\cap X_{j}\right)$
is an affine open cover of $U_{1}\cup U_{2}$. As $t_{i}\in\mathcal{T}\left(U_{i}\right)$,
we get that $t_{i}|_{U_{i}\cap X_{j}}\in\mathcal{T}\left(U_{i}\cap X_{j}\right)$.
As $\left(U_{1}\cup U_{2}\right)\cap X_{j}$ is affine, and $\bigcup\limits _{i=1}^{2}\left(U_{i}\cap X_{j}\right)$
is an affine open cover of it, and as \textcolor{black}{$t_{1}|_{U_{1}\cap U_{2}}=t_{2}|_{U_{1}\cap U_{2}}$,
we get by Lemma \ref{lem-tempered-sheaf-affine} that $t|_{\left(U_{1}\cup U_{2}\right)\cap X_{j}}\in\mathcal{T}\left(\left(U_{1}\cup U_{2}\right)\cap X_{j}\right)$.
Thus $t\in\mathcal{T}\left(U_{1}\cup U_{2}\right)$.}
\end{proof}
Before dealing with cosheaves in the category of real vector spaces,
let us recall their definition:

First, define the category $Top(X)$ to be such that its objects are
the open sets of $X$, and its morphisms are the inclusion maps. A
\emph{pre-cosheaf} $F$ on a topological space $X$ is a covariant
functor from $Top(X)$ to the category of real vector spaces. A \emph{cosheaf}
on a topological space $X$ is a pre-cosheaf, such that for any open
$V\subset X$ and any open cover $\{V_{i}\}_{i\in I}$ of $V$, the
following sequence is exact:
\[
\bigoplus\limits _{\left(i,j\right)\in I^{2}}F\left(V_{i}\cap V_{j}\right)\xrightarrow{Ext_{1}}\bigoplus\limits _{i\in I}F\left(V_{i}\right)\xrightarrow{Ext_{2}}F\left(V\right)\xrightarrow{}0,
\]

where the $k$-th coordinate of $Ext_{1}(\bigoplus\limits _{(i,j)\in I^{2}}\xi_{i,j})$
is $\sum\limits _{i\in I}Ext_{V_{k}\cap V_{i}}^{V_{k}}(\xi_{k,i}-\xi_{i,k})$,
and $Ext_{2}(\bigoplus\limits _{i\in I}\xi_{i}):=\sum\limits _{i\in I}Ext_{V_{i}}^{V}(\xi_{i})$.
When exactness will be proven in Proposition \ref{Schwartz-is-a-cosheaf}
below all calculations will be quickly reduced to finite subcovers.
A cosheaf is \emph{flabby} if for any two open subsets $U,V\subset X$
such that $V\subset U$, the morphism $Ext_{V}^{U}:F(V)\to F(U)$
is injective.
\begin{lem}
\label{lem-Sch-cosheaf-affine}Let $X$ be an affine QN variety. The
assignment of the space of Schwartz functions to any open $U\subset X$,
together with the extension by zero $Ext_{U}^{V}$ from $U$ to any
other open $V\supset U$, form a flabby cosheaf on X.
\end{lem}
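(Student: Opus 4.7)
The plan is to verify four things in turn: (a) the data define a covariant functor from $Top(X)$ to real vector spaces; (b) $Ext_{2}$ is surjective for every finite open cover; (c) $\ker(Ext_{2}) = \mathrm{im}(Ext_{1})$; and (d) $Ext_{V}^{U}$ is injective for $V\subset U$. Since we work in the restricted topology (Definition \ref{AG-def-rest-top}) every open cover is finite, so all sums are genuinely finite. The functoriality in (a) is immediate from Lemma \ref{lem-ext-by-0-affine}, which provides the morphism $Ext_{V}^{U}:\mathcal{S}(V)\to\mathcal{S}(U)$, and (d) is tautological since $Ext_{V}^{U}(f)=0$ on $U$ forces $f\equiv 0$ on $V$.

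For (b), let $V=\bigcup_{i=1}^{m}V_{i}$ and $\phi\in\mathcal{S}(V)$. The open subset $V\subset X$ is itself an affine QN variety by Remark \ref{Rem-NQN-Like}, so Proposition \ref{prop-part-of-unity} furnishes a tempered partition of unity $\{\alpha_{i}\}$ on $V$ with $\mathrm{supp}(\alpha_{i})\subset V_{i}$, $\sum_{i}\alpha_{i}=1$, and $(\alpha_{i}\phi)|_{V_{i}}\in\mathcal{S}(V_{i})$. Then $\phi=\sum_{i}Ext_{V_{i}}^{V}\bigl((\alpha_{i}\phi)|_{V_{i}}\bigr)$, which proves surjectivity of $Ext_{2}$.

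For (c), the inclusion $\mathrm{im}(Ext_{1})\subseteq\ker(Ext_{2})$ follows from a direct cancellation using the antisymmetric combination $\xi_{k,i}-\xi_{i,k}$. Conversely, given $\bigoplus_{i}\xi_{i}$ with $\sum_{i}Ext_{V_{i}}^{V}(\xi_{i})=0$, reuse the partition $\{\alpha_{j}\}$ above and set $\xi_{k,i}:=(\alpha_{i}\xi_{k})|_{V_{k}\cap V_{i}}$. To see $\xi_{k,i}\in\mathcal{S}(V_{k}\cap V_{i})$, observe that $\alpha_{i}|_{V_{k}}\cdot\xi_{k}\in\mathcal{S}(V_{k})$ by Proposition \ref{prop-sXt}, and since $\mathrm{supp}(\alpha_{i})\subset V_{i}$, this product vanishes identically on the open subset $V_{k}\backslash\mathrm{supp}(\alpha_{i})$ of $V_{k}$, which contains the closed subset $V_{k}\backslash V_{i}$; hence the product is flat on $V_{k}\backslash V_{i}$, and Theorem \ref{thm-affine-char} gives $\xi_{k,i}\in\mathcal{S}(V_{k}\cap V_{i})$. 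A pointwise check now shows
\[
\sum_{i}Ext_{V_{k}\cap V_{i}}^{V_{k}}(\xi_{k,i}) \;=\; \sum_{i}\alpha_{i}|_{V_{k}}\cdot\xi_{k} \;=\; \xi_{k},
\]
while for each $x\in V_{k}$,
\[
\Bigl[\sum_{i}Ext_{V_{k}\cap V_{i}}^{V_{k}}(\xi_{i,k})\Bigr](x) \;=\; \alpha_{k}(x)\sum_{i:\,x\in V_{i}}\xi_{i}(x) \;=\; \alpha_{k}(x)\cdot Ext_{2}\bigl(\bigoplus_{i}\xi_{i}\bigr)(x) \;=\; 0.
\]
Subtracting gives $\xi_{k}=\sum_{i}Ext_{V_{k}\cap V_{i}}^{V_{k}}(\xi_{k,i}-\xi_{i,k})$, i.e. $\bigoplus_{k}\xi_{k}=Ext_{1}(\bigoplus_{k,i}\xi_{k,i})$, as required.

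The main obstacle I anticipate is verifying $\xi_{k,i}\in\mathcal{S}(V_{k}\cap V_{i})$: the delicate point is converting the support condition $\mathrm{supp}(\alpha_{i})\subset V_{i}$ into honest flatness on the closed subset $V_{k}\backslash V_{i}$ of $V_{k}$, so that Theorem \ref{thm-affine-char} can be invoked. Surjectivity (b) and the remaining checks reduce to bookkeeping with the finite partition of unity.
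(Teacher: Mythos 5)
Your proof is correct, and for the exactness step $\ker(Ext_{2})\subseteq\mathrm{im}(Ext_{1})$ it takes a genuinely different and cleaner route than the paper. The paper's proof (borrowed from {[}ES - Proposition 4.5{]}) proceeds by induction on the number of covering sets: the base case $l=2$ uses the observation that two Schwartz functions whose zero-extensions sum to zero must each be flat off the intersection, and the inductive step applies a partition of unity to reduce $k+1$ sets to $k$, handling the last function separately. You instead give an explicit \v{C}ech-style primitive in one stroke: setting $\xi_{k,i}:=(\alpha_{i}\xi_{k})|_{V_{k}\cap V_{i}}$, you check once that each $\xi_{k,i}$ is Schwartz (your argument converting $\mathrm{supp}(\alpha_{i})\subset V_{i}$ into flatness on $V_{k}\backslash V_{i}$ via the open neighborhood $V_{k}\backslash\mathrm{supp}(\alpha_{i})$ is sound, and then Theorem \ref{thm-affine-char} applies), and the two pointwise identities $\sum_{i}Ext_{V_{k}\cap V_{i}}^{V_{k}}(\xi_{k,i})=\xi_{k}$ and $\sum_{i}Ext_{V_{k}\cap V_{i}}^{V_{k}}(\xi_{i,k})=\alpha_{k}\cdot Ext_{2}(\oplus\xi_{i})=0$ finish the job. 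Both routes rely on the same tools (tempered partition of unity, Proposition \ref{prop-sXt}, Theorem \ref{thm-affine-char}); your version buys a closed-form formula and avoids the bookkeeping of the induction, at the cost of no generality. Your handling of surjectivity, functoriality, and flabbiness matches the paper's.
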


\begin{proof}
The proof follows {[}ES - Proposition 4.5{]} - By extension by 0 (see
Theorem \ref{thm-affine-char}), $X$ is a pre-cosheaf. Now we shall
prove the exactness:

The $\bigoplus\limits _{j=1}^{l}F\left(U_{j}\right)\longrightarrow F\left(U\right)\longrightarrow0$
part follows immediately from partition of unity. The second part
uses induction on the number of the covering sets. The base step uses
the fact that the two functions sum to zero everywhere, and that their
extensions are flat outside the sets' intersection. Thus, by the characterization
property given in Theorem \ref{thm-affine-char}, their restriction
to the intersection is Schwartz. In the inductive step, we use partition
of unity on $k+1$ sets, to create some new Schwartz functions on
the first $k$ sets whose extensions sum to zero. We then define and
prove the claim for the $k+1$'th function, using the fact that each
of the $k$ functions is flat at each point they are solely defined
. 

\textcolor{black}{The precise proof is the same as in {[}ES{]}, with
the following replacements: }

\textcolor{black}{Theorem \ref{thm-temp-dist-onto-general} replaces
{[}ES - Theorem 3.25{]}, }

\textcolor{black}{Remark \ref{Rem-NQN-Like} replaces {[}ES - Cor.
3.10{]}, }

\textcolor{black}{Proposition \ref{prop-part-of-unity} replaces {[}ES
- Prop. 3.11{]}, }

\textcolor{black}{Proposition \ref{prop-ext-by-0}replaces {[}ES -
Prop. 3.16{]}, }

\textcolor{black}{Theorem \ref{thm-affine-char} replaces {[}ES -
Thm. 3.20{]}, }

\textcolor{black}{Remark \ref{rem-temp-Sch-is-Sch} replaces {[}ES
- Cor. 3.22{]}, }

\textcolor{black}{Corollary \ref{cor-Sch-flat-at-point} replaces
{[}ES - Cor. 3.21{]}, }

\textcolor{black}{Propostion \ref{lem-tempered-sheaf-affine} replaces
{[}ES - Prop. 4.3{]}, and }

\textcolor{black}{Lemma \ref{lem-sXt-non-affine} replaces {[}ES -
Prop. 3.9{]}.}
\end{proof}
\begin{cor}
\label{cor-Sch-cosheaf}Let $X$ be a QN variety. The assignment of
the space of Schwartz functions to any open $U\subset X$, together
with the extension by zero $Ext_{U}^{V}$ from $U$ to any other open
$V\supset U$, form a flabby cosheaf on X.
\end{cor}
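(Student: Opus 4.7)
The strategy is to reduce to the affine case (Lemma \ref{lem-Sch-cosheaf-affine}) by means of an affine refinement together with tempered partition of unity.

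First I would verify the pre-cosheaf structure. Proposition \ref{prop-ext-by-0} supplies a linear map $Ext_U^V:\mathcal{S}(U)\to\mathcal{S}(V)$ for every pair of opens $U\subset V\subset X$, and functoriality under composition of inclusions is immediate from the definition. Flabbiness then follows at once from Theorem \ref{thm-general-char}: it identifies $Ext_U^V$ with the isomorphism $\mathcal{S}(U)\xrightarrow{\sim}W_{V\setminus U}\subset\mathcal{S}(V)$, and in particular this map is injective.

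Next, for the cosheaf axiom, fix an open $V\subset X$ and a finite open cover $\{V_i\}_{i=1}^{n}$ (finite by the restricted-topology convention). Surjectivity of $Ext_2:\bigoplus_i\mathcal{S}(V_i)\to\mathcal{S}(V)$ is immediate from tempered partition of unity: by Proposition \ref{prop-part-of-unity}(2) applied to the QN variety $V$ with its cover $\{V_i\}$, any $f\in\mathcal{S}(V)$ decomposes as $f=\sum_i Ext_{V_i}^V\bigl((\alpha_i f)|_{V_i}\bigr)$, exhibiting $f$ as an image.

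For exactness at $\bigoplus_i\mathcal{S}(V_i)$, assume $\bigoplus\xi_i$ satisfies $\sum_i Ext_{V_i}^V\xi_i=0$. Choose an affine open cover $X=\bigcup_{k=1}^{m}X_k$ and a tempered partition of unity $\{\beta_k\}$ subordinate to it (Proposition \ref{prop-part-of-unity}). Setting $\xi_{i,k}:=(\beta_k\xi_i)|_{V_i\cap X_k}$, Remark \ref{rem-temp-Sch-is-Sch} yields $\xi_{i,k}\in\mathcal{S}(V_i\cap X_k)$ with $\xi_i=\sum_k Ext_{V_i\cap X_k}^{V_i}\xi_{i,k}$. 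The vanishing hypothesis, multiplied by $\beta_k$ and restricted to $X_k$, becomes $\sum_i Ext_{V_i\cap X_k}^{X_k}\xi_{i,k}=0$ inside $\mathcal{S}(X_k)$. Since each $V_i\cap X_k$ is an open (hence affine, by Remark \ref{Rem-NQN-Like}) QN subvariety of $X_k$, the affine cosheaf Lemma \ref{lem-Sch-cosheaf-affine} supplies elements $\eta_{i,j}^{k}\in\mathcal{S}(V_i\cap V_j\cap X_k)$ realizing this relation under $Ext_1$ for the cover $\{V_i\cap X_k\}_i$ of $X_k$. Setting $\eta_{i,j}:=\sum_k Ext_{V_i\cap V_j\cap X_k}^{V_i\cap V_j}\eta_{i,j}^{k}\in\mathcal{S}(V_i\cap V_j)$ should then give the desired preimage of $\bigoplus\xi_i$ under $Ext_1$.

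The main obstacle I anticipate is verifying, after reassembly, that the global identity $\xi_i=\sum_j Ext_{V_i\cap V_j}^{V_i}(\eta_{i,j}-\eta_{j,i})$ holds; this requires keeping track of how the partition of unity $\{\beta_k\}$ interacts with the antisymmetric combination coming from the affine case. If the straightforward glueing fails, the alternative is to replace the cover $\{V_i\}$ by the doubly indexed affine refinement $\{V_i\cap X_k\}_{i,k}$, apply Lemma \ref{lem-Sch-cosheaf-affine} on each $X_k$ separately (reducing further to individual affine open subsets $V_i\cap X_k\subset X_k$), and then contract the $k$-index using $\sum_k\beta_k=1$ to recover a witness of the cosheaf relation for the original cover.
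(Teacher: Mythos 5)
Your proof is correct and follows the same overall strategy as the paper's: reduce to the affine cosheaf Lemma~\ref{lem-Sch-cosheaf-affine} by multiplying with a tempered partition of unity subordinate to an affine cover, apply the affine lemma piece by piece, and reassemble by contracting the affine index. One genuine (and welcome) simplification is your surjectivity argument: you invoke Proposition~\ref{prop-part-of-unity}(2) directly on $V$ with the cover $\{V_i\}$ to get $f=\sum_i Ext_{V_i}^{V}\bigl((\alpha_i f)|_{V_i}\bigr)$, whereas the paper first decomposes $f$ along an affine cover $\{U^i\}$ of $V$ and then applies the affine cosheaf lemma's surjectivity on each $U^i$ separately before commuting the sums; your version avoids invoking the affine lemma for that half. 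The concern you raise at the end about reassembling the antisymmetric combination is unfounded: since all the $Ext$'s involved are extensions by zero between nested opens, $Ext_{V_i\cap V_j}^{V_i}\circ Ext_{V_i\cap V_j\cap X_k}^{V_i\cap V_j}=Ext_{V_i\cap X_k}^{V_i}\circ Ext_{V_i\cap V_j\cap X_k}^{V_i\cap X_k}$, so commuting the finite sums over $j$ and $k$ gives
\[
\sum_j Ext_{V_i\cap V_j}^{V_i}\bigl(\eta_{i,j}-\eta_{j,i}\bigr)=\sum_k Ext_{V_i\cap X_k}^{V_i}\Bigl(\sum_j Ext_{V_i\cap V_j\cap X_k}^{V_i\cap X_k}\bigl(\eta_{i,j}^{k}-\eta_{j,i}^{k}\bigr)\Bigr)=\sum_k Ext_{V_i\cap X_k}^{V_i}\xi_{i,k}=\xi_i,
\]
the last equality being exactly $\sum_k\beta_k=1$. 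This is precisely what the paper also does (with indices relabeled), so your ``alternative'' in the final paragraph is in fact the same computation you already wrote out.
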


\begin{proof}
By Proposition \ref{prop-ext-by-0}, the Schwartz functions form a
pre-cosheaf on $X$. 

Let $\bigcup\limits _{i=1}^{k}X^{i}=X$ be a finite open cover such
that for any $i$, $X^{i}$ is affine. Let $U\subset X$ be some open
set, $\bigcup\limits _{j=1}^{l}U_{j}=U$ is some open cover, and let
$s\in\mathcal{S}\left(U\right)$. Then for any $i$, $U^{i}:=U\cap X^{i}$
is an open subset of the affine $X^{i}$, and $\bigcup\limits _{j=1}^{l}U_{j}^{i}=U^{i}$
where $U_{j}^{i}:=U^{i}\cap U_{j}$ is an open cover of $U^{i}$.
By definition, $s=\sum\limits _{i=1}^{k}Ext_{U^{i}}^{U}\left(s^{i}\right)$
where $s^{i}\in\mathcal{S}\left(U^{i}\right)$. By Lemma \ref{lem-Sch-cosheaf-affine}
we know that $s^{i}=\sum\limits _{j=1}^{l}Ext_{U_{j}^{i}}^{U^{i}}\left(s_{j}^{i}\right)$
where $s_{j}^{i}\in\mathcal{S}\left(U_{j}^{i}\right)$. So we get
that $s=\sum\limits _{i=1}^{k}Ext_{U^{i}}^{U}\sum\limits _{j=1}^{l}Ext_{U_{j}^{i}}^{U^{i}}s_{j}^{i}$
and as the sums are finite, we may write $s=\sum\limits _{j=1}^{l}Ext_{U_{j}}^{U}\sum\limits _{i=1}^{k}Ext_{U_{j}^{i}}^{U_{j}}s_{j}^{i}=\sum\limits _{j=1}^{l}Ext_{U_{j}}^{U}s_{j}$
where $s_{j}:=\sum\limits _{i=1}^{k}Ext_{U_{j}^{i}}^{U_{j}}s_{j}^{i}$
and $s_{j}\in\mathcal{S}\left(U_{j}\right)$ by definition. This proves
the part 
\[
\bigoplus\limits _{j=1}^{l}F\left(U_{j}\right)\longrightarrow F\left(U\right)\longrightarrow0
\]
 of the cosheaf definition.

Now let us have $\left\{ s_{j}\right\} \subset\mathcal{S}\left(U_{j}\right)$
such that $\sum\limits _{j=1}^{l}Ext_{U_{j}}^{U}s_{j}=0$. As $\bigcup\limits _{i=1}^{k}U^{i}=U$
is an open (affine) cover of $U$, we may use Proposition \ref{prop-part-of-unity}
to get some tempered functions $\left\{ \alpha_{i}\right\} _{i=1}^{k}$
on $U$, such that $supp\left(\alpha_{i}\right)\subset U^{i}$ and
$\sum\limits _{i=1}^{k}\alpha_{i}=1$, and for any $\varphi\in\mathcal{S}\left(U\right),\:\left(\alpha_{i}\varphi\right)|_{U^{i}}\in\mathcal{S}\left(U^{i}\right)$.
Now, $S_{j}:=Ext_{U_{j}}^{U}s_{j}\in\mathcal{S}\left(U\right)$ by
Proposition \ref{prop-ext-by-0}, so $\left(\alpha_{i}\cdot S_{j}\right)|_{U^{i}}\in\mathcal{S}\left(U^{i}\right)$.
We get an affine QN variety $U^{i}$ with $\sum\limits _{j=1}^{l}\left(\left(\alpha_{i}\cdot S_{j}\right)|_{U^{i}}\right)=0$.
Notice that for each $j$, the function $\alpha_{i}\cdot S_{j}$ is
flat on $U^{i}\backslash U_{j}$, so we may use Theorem \ref{thm-affine-char}
to get $\left(\alpha_{i}\cdot S_{j}\right)|_{U_{j}^{i}}\in\mathcal{S}\left(U_{j}^{i}\right)$.
Thus, we have $\sum\limits _{j=1}^{l}Ext_{U_{j}^{i}}^{U^{i}}\left(\left(\alpha_{i}\cdot S_{j}\right)|_{U_{j}^{i}}\right)=0$
and by Lemma \ref{lem-Sch-cosheaf-affine}, we know there exists $s_{jm}^{i}\in\mathcal{S}\left(U_{j}^{i}\cap U_{m}^{i}\right)$
satisfying $\left(\alpha_{i}\cdot S_{j}\right)|_{U_{j}^{i}}=\sum\limits _{m<j}Ext_{U_{j}^{i}\cap U_{m}^{i}}^{U_{j}^{i}}\left(s_{jm}^{i}\right)-\sum\limits _{m>j}Ext_{U_{j}^{i}\cap U_{m}^{i}}^{U_{j}^{i}}\left(s_{mj}^{i}\right)$.
Sum the extensions by zero to $U_{j}$ of both sides to get 

\[
\sum\limits _{i=1}^{k}Ext_{U_{j}^{i}}^{U_{j}}\left(\left(\alpha_{i}\cdot S_{j}\right)|_{U_{j}^{i}}\right)=
\]
\[
\sum\limits _{i=1}^{k}Ext_{U_{j}^{i}}^{U_{j}}\left(\sum\limits _{m<j}Ext_{U_{j}^{i}\cap U_{m}^{i}}^{U_{j}^{i}}\left(s_{jm}^{i}\right)-\sum\limits _{m>j}Ext_{U_{j}^{i}\cap U_{m}^{i}}^{U_{j}^{i}}\left(s_{mj}^{i}\right)\right),
\]

$\:$

but

$\:$
\[
\sum\limits _{i=1}^{k}Ext_{U_{j}^{i}}^{U_{j}}\left(\left(\alpha_{i}\cdot S_{j}\right)|_{U_{j}^{i}}\right)=S_{j}|_{U_{j}}=s_{j}
\]

$\:$

and

$\:$
\[
\sum\limits _{i=1}^{k}Ext_{U_{j}^{i}}^{U_{j}}\left(\sum\limits _{m<j}Ext_{U_{j}^{i}\cap U_{m}^{i}}^{U_{j}^{i}}\left(s_{jm}^{i}\right)-\sum\limits _{m>j}Ext_{U_{j}^{i}\cap U_{m}^{i}}^{U_{j}^{i}}\left(s_{mj}^{i}\right)\right)=
\]
\[
\sum\limits _{m<j}Ext{}_{U_{j}\cap U_{m}}^{U_{j}}\sum\limits _{i=1}^{k}Ext_{U_{j}^{i}\cap U_{m}^{i}}^{U_{j}\cap U_{m}}\left(s_{jm}^{i}\right)-\sum\limits _{m>j}Ext{}_{U_{j}\cap U_{m}}^{U_{j}}\sum\limits _{i=1}^{k}Ext_{U_{j}^{i}\cap U_{m}^{i}}^{U_{j}\cap U_{m}}\left(s_{mj}^{i}\right)=
\]
\[
\sum\limits _{m<j}Ext{}_{U_{j}\cap U_{m}}^{U_{j}}\left(s_{jm}\right)-\sum\limits _{m>j}Ext{}_{U_{j}\cap U_{m}}^{U_{j}}\left(s_{mj}\right)
\]

$\:$

where $s_{jm}\in\mathcal{S}\left(U_{j}\cap U_{m}\right)$ by definition.
This sums up to 

$\:$
\[
s_{j}=\sum\limits _{m<j}Ext{}_{U_{j}\cap U_{m}}^{U_{j}}\left(s_{jm}\right)-\sum\limits _{m>j}Ext{}_{U_{j}\cap U_{m}}^{U_{j}}\left(s_{mj}\right)
\]

$\:$

what proves the part

$\:$
\[
\bigoplus\limits _{j>m}F\left(U_{j}\cap U_{m}\right)\longrightarrow\bigoplus\limits _{j=1}^{l}F\left(U_{j}\right)\longrightarrow F\left(U\right)
\]

$\:$

of the cosheaf definition.
\end{proof}
\begin{lem}
\label{lem-temp-dist-sheaf-affine}Let $X$ be an affine QN variety.
The assignment of the space of tempered distributions to any open
$U\subset X$, together with restrictions of functionals from $\mathcal{S}^{*}\left(U\right)$
to $\mathcal{S}^{*}\left(V\right)$, for any other open $V\subset U$,
form a flabby sheaf on $X$.
\end{lem}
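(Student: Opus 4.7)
The plan is to deduce this lemma directly by dualizing the flabby cosheaf structure of Schwartz functions established in Lemma \ref{lem-Sch-cosheaf-affine}. By definition the ``restriction'' on $\mathcal{S}^*$ is precomposition with extension by zero, $\xi\mapsto\xi\circ Ext_V^U$, and the associativity $Ext_W^V\circ Ext_V^U=Ext_W^U$ gives functoriality with no extra work, so the presheaf property is automatic.

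For flabbiness, let $V\subset U$ be open in $X$. By Theorem \ref{thm-temp-dist-onto-general} (a direct consequence of Theorem \ref{thm-general-char} together with Hahn--Banach, Theorem \ref{thm-Hahn-Banach}, applied to the closed embedding $Ext_V^U:\mathcal{S}(V)\hookrightarrow\mathcal{S}(U)$), the restriction map $\mathcal{S}^*(U)\to\mathcal{S}^*(V)$ is surjective. This is precisely the flabbiness condition for sheaves.

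For the sheaf axioms, let $\{V_i\}_{i=1}^{n}$ be a finite open cover of an open $V\subset X$ (recall covers are finite by the restricted topology convention). Lemma \ref{lem-Sch-cosheaf-affine} yields the exact cosheaf sequence
\[
\bigoplus_{(i,j)}\mathcal{S}(V_i\cap V_j)\xrightarrow{Ext_1}\bigoplus_i \mathcal{S}(V_i)\xrightarrow{Ext_2}\mathcal{S}(V)\to 0,
\]
and the extension maps are continuous (as used inside the proof of Theorem \ref{thm-affine-char}). Applying the continuous-dual functor and using that $Ext_2$ is a continuous surjection of Fr\'echet spaces (so its dual is injective, and by the open mapping theorem, Theorem \ref{thm-Banach-open-mapping}, it realizes $\mathcal{S}(V)$ as a topological quotient), one obtains the exact sequence
\[
0\to\mathcal{S}^*(V)\to\bigoplus_i \mathcal{S}^*(V_i)\to\bigoplus_{(i,j)}\mathcal{S}^*(V_i\cap V_j).
\]
Injectivity at $\mathcal{S}^*(V)$ is the identity axiom; exactness at the middle term is the gluing axiom. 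Concretely, given compatible $\xi_i\in\mathcal{S}^*(V_i)$, one defines $\xi\in\mathcal{S}^*(V)$ by $\xi\bigl(\sum_i Ext_{V_i}^V f_i\bigr):=\sum_i\xi_i(f_i)$, and the compatibility together with the middle-exactness of the cosheaf sequence ensures this is well defined on representatives.

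The main delicate point I expect is not the abstract duality but verifying that the glued functional $\xi$ is \emph{continuous} on $\mathcal{S}(V)$, not merely well defined as a linear form. The natural route is: $\mathcal{S}(V)$ carries the quotient topology of $\bigoplus_i\mathcal{S}(V_i)$ under $Ext_2$ (this is exactly what \ref{thm-Banach-open-mapping} gives, once one knows the cosheaf map is a continuous surjection between Fr\'echet spaces), hence a linear form on $\mathcal{S}(V)$ is continuous iff its pullback to $\bigoplus_i\mathcal{S}(V_i)$ is continuous; and the latter is $\bigoplus_i\xi_i$, manifestly continuous. This closes the sheaf axioms and, together with the flabbiness step above, completes the proof.
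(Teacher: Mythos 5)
Your proof is correct, but it takes a different route from the paper's own proof of this lemma. The paper proves the affine case directly, by using the tempered partition of unity (Proposition \ref{prop-part-of-unity}) to write $s = \sum_i \beta_i s$ with $(\beta_i s)|_{U_i} \in \mathcal{S}(U_i)$, and then defining the glued functional concretely as $\xi(s) := \sum_i \xi_i(\beta_i \cdot s)$; the well-definedness and the restriction property are then checked by hand. You instead treat Lemma \ref{lem-Sch-cosheaf-affine} as a black box exactness statement and dualize it, recovering the sheaf axioms as the left-exactness of the continuous-dual sequence, with the topological subtlety resolved by observing that $Ext_2$ realizes $\mathcal{S}(V)$ as a Fr\'echet quotient of $\bigoplus_i \mathcal{S}(V_i)$ (open mapping theorem). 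This is essentially the same argument the paper uses one step later for the non-affine case (Corollary \ref{cor-Dist-sheaf}, proved by ``duality'' from Corollary \ref{cor-Sch-cosheaf}), so your approach is completely legitimate; it just reverses the order of the logic relative to the paper. Your version is arguably more structural and makes explicit the continuity analysis that the paper's one-line proof of Corollary \ref{cor-Dist-sheaf} leaves implicit, at the cost of leaning on the already-proved cosheaf exactness rather than redoing the partition-of-unity manipulation. One small point worth stating explicitly in your write-up: the verification that $\xi|_{\mathcal{S}(V_i)} = \xi_i$ (not just that $\xi$ is well defined) needs a line, namely that evaluating $\xi$ on $Ext_{V_i}^V(g)$ via the one-term decomposition gives $\xi_i(g)$; this is trivial but is part of the gluing axiom.
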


\begin{proof}
The proof follows {[}ES - Proposition 4.4{]}. The proof uses extension
by zero to show presheaf structure. To show uniqueness, we use partition
of unity which enables us to write $s\in\mathcal{S}\left(U\right)$
as $s=\sum\limits _{i=1}^{k}\left(\beta_{i}\cdot s\right)$ where
$supp\left(\beta_{i}\right)\subset U_{i}$. Then, as $\left(\beta_{i}\cdot s\right)|_{U_{i}}\in\mathcal{S}\left(U_{i}\right)$,
and as the functionals $\xi,\:\zeta\in\mathcal{S}^{*}\left(U\right)$
agree on each subset and by the linearity of the functionals, we get
$\xi\left(s\right)-\zeta\left(s\right)=\xi\left(\sum\limits _{i=1}^{k}\left(\beta_{i}\cdot s\right)\right)-\zeta\left(\sum\limits _{i=1}^{k}\left(\beta_{i}\cdot s\right)\right)=\sum\limits _{i=1}^{k}\left(\xi\left(\beta_{i}\cdot s\right)-\zeta\left(\beta_{i}\cdot s\right)\right)=0$.
This means the uniqueness is achieved. 

The existence is proven by partition of unity on $U=\bigcup\limits _{i=1}^{k}U_{i}$,
and defining a functional $\xi\in\mathcal{S}^{*}\left(U\right)$ in
the following way: $\xi\left(s\right)=\xi\left(\sum\limits _{i=1}^{k}\left(\beta_{i}\cdot s\right)\right):=\sum\limits _{i=1}^{k}\xi_{i}\left(\beta_{i}\cdot s\right)$
where $\xi_{i}\in\mathcal{S}^{*}\left(U_{i}\right)$, $s\in\mathcal{S}\left(U\right)$
and $\beta_{i}$ are the tempered functions obtained by the partition
of unity. For any $U_{\alpha}\subset U$ open, where $\alpha\in\left\{ 1,...,k\right\} $
and $s_{\alpha}\in\mathcal{S}\left(U_{\alpha}\right)$, we get $\left(\beta_{i}|_{U_{\alpha}}\cdot s_{\alpha}\right)|_{U_{\alpha}\cap U_{i}}\in\mathcal{S}\left(U_{\alpha}\cap U_{i}\right)$
by Lemma \ref{lem-tempered-sheaf-affine} and Proposition \ref{lem-sXt-non-affine}.
As $\xi_{\alpha}|_{\mathcal{S}\left(U_{\alpha}\cap U_{i}\right)}=\xi_{i}|_{\mathcal{S}\left(U_{\alpha}\cap U_{i}\right)}$
, and $s_{\alpha}$ may be extended to a Schwartz function on $U$,
we get $\xi_{\alpha}\left(s_{\alpha}\right)=\xi_{\alpha}\left(\sum\limits _{i=1}^{k}\left(\beta_{i}\cdot s_{\alpha}\right)\right)=\sum\limits _{i=1}^{k}\xi_{\alpha}\left(\beta_{i}\cdot s_{\alpha}\right)=\sum\limits _{i=1}^{k}\xi_{i}\left(\beta_{i}\cdot s_{\alpha}\right)=\xi\left(s_{\alpha}\right)$.
This means that for any $\alpha,\:\xi|_{\mathcal{S}\left(U_{\alpha}\right)}=\xi_{\alpha}$
and the existence holds.

\textcolor{black}{Proposition \ref{prop-ext-by-0} replaces {[}ES
- Prop. 3.16{]}, Proposition \ref{prop-part-of-unity} replaces {[}ES
- Prop. 3.11{]}, and Remark \ref{rem-temp-Sch-is-Sch} replaces {[}ES
- Cor. 3.22{]}.}
\end{proof}
\begin{cor}
\label{cor-Dist-sheaf}Let $X$ be a QN variety. The assignment of
the space of tempered distributions to any open $U\subset X$, together
with restrictions of functionals from $\mathcal{S}^{*}\left(U\right)$
to $\mathcal{S}^{*}\left(V\right)$, for any other open $V\subset U$,
form a flabby sheaf on $X$.
\end{cor}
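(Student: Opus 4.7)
The plan is to imitate the proof of Lemma \ref{lem-temp-dist-sheaf-affine} verbatim, replacing each tool used for affine varieties by its general-QN counterpart, since every ingredient of that proof (tempered partition of unity, extension by zero, sheaf property of tempered functions, product of tempered and Schwartz) is now available on an arbitrary QN variety. Concretely, the presheaf structure comes by dualizing: by Proposition \ref{prop-ext-by-0} and Theorem \ref{thm-general-char}, for any open $V\subset U\subset X$ the map $Ext_{V}^{U}:\mathcal{S}(V)\to\mathcal{S}(U)$ is a continuous linear embedding, and its adjoint is the restriction morphism $\mathcal{S}^{*}(U)\to\mathcal{S}^{*}(V)$. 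Functoriality (composition and identity) is inherited from the cosheaf structure on Schwartz functions established in Corollary \ref{cor-Sch-cosheaf}.

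For the separation axiom, I would take an open cover $U=\bigcup_{i=1}^{k}U_{i}$ and two distributions $\xi,\zeta\in\mathcal{S}^{*}(U)$ with $\xi|_{U_{i}}=\zeta|_{U_{i}}$ for all $i$. Apply Proposition \ref{prop-part-of-unity} to get tempered functions $\{\beta_{i}\}$ with $\operatorname{supp}(\beta_{i})\subset U_{i}$, $\sum\beta_{i}=1$, and such that $(\beta_{i}s)|_{U_{i}}\in\mathcal{S}(U_{i})$ for every $s\in\mathcal{S}(U)$. Then
\[
(\xi-\zeta)(s)=\sum_{i=1}^{k}(\xi-\zeta)(\beta_{i}s)=\sum_{i=1}^{k}\bigl(\xi|_{U_{i}}-\zeta|_{U_{i}}\bigr)\bigl((\beta_{i}s)|_{U_{i}}\bigr)=0,
\]
using Lemma \ref{lem-sXt-non-affine} to justify that $\beta_{i}s\in\mathcal{S}(U)$ and that the restriction to $U_{i}$ makes sense as a Schwartz function.

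For gluing, given a compatible family $\xi_{i}\in\mathcal{S}^{*}(U_{i})$ with $\xi_{i}|_{U_{i}\cap U_{j}}=\xi_{j}|_{U_{i}\cap U_{j}}$, I would define
\[
\xi(s):=\sum_{i=1}^{k}\xi_{i}\bigl((\beta_{i}s)|_{U_{i}}\bigr).
\]
Linearity and continuity are immediate, and for any $\alpha$ and $s_{\alpha}\in\mathcal{S}(U_{\alpha})$, extending by zero to $U$ (via Proposition \ref{prop-ext-by-0}) and using that $\beta_{i}\cdot\mathrm{Ext}(s_{\alpha})$ has support inside $U_{\alpha}\cap U_{i}$ together with the compatibility $\xi_{i}|_{U_{\alpha}\cap U_{i}}=\xi_{\alpha}|_{U_{\alpha}\cap U_{i}}$ yields
\[
\xi\bigl(Ext_{U_{\alpha}}^{U}s_{\alpha}\bigr)=\sum_{i}\xi_{\alpha}\bigl((\beta_{i}s_{\alpha})|_{U_{\alpha}\cap U_{i}}\bigr)=\xi_{\alpha}(s_{\alpha}),
\]
so $\xi|_{U_{\alpha}}=\xi_{\alpha}$. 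Independence of $\xi$ from the choice of partition follows from the separation property just proved.

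Finally, flabbiness is exactly the content of the surjectivity statement in Theorem \ref{thm-temp-dist-onto-general}: for any open $V\subset U$, $\mathcal{S}^{*}(U)\twoheadrightarrow\mathcal{S}^{*}(V)$. I do not anticipate a real obstacle: the only subtle point is verifying that the gluing formula is independent of the chosen partition of unity, but this is forced by the separation axiom, so the order (presheaf, separation, gluing, flabbiness) is self-correcting. All other manipulations reduce to bookkeeping with the cosheaf axioms for $\mathcal{S}$ that were already established in Corollary \ref{cor-Sch-cosheaf}.
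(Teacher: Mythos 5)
Your proof is correct, but it takes a noticeably more explicit route than the paper's. The paper disposes of this corollary in a single sentence: the sheaf property of $\mathcal{S}^{*}$ is dual to the cosheaf property of $\mathcal{S}$ established in Corollary \ref{cor-Sch-cosheaf}. That is, one passes to continuous duals of the exact cosheaf sequence
\[
\bigoplus_{i,j}\mathcal{S}(U_{i}\cap U_{j})\to\bigoplus_{i}\mathcal{S}(U_{i})\to\mathcal{S}(U)\to 0,
\]
and uses that for Fr\'echet spaces a continuous surjection dualizes to an injection (Hahn--Banach) and functionals killing the image factor through the quotient, which gives exactly the sheaf axioms; flabbiness is Theorem \ref{thm-temp-dist-onto-general}. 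You instead replay the concrete argument of the affine case (Lemma \ref{lem-temp-dist-sheaf-affine}), now available globally thanks to the general tempered partition of unity (Proposition \ref{prop-part-of-unity}), extension by zero (Proposition \ref{prop-ext-by-0}), Theorem \ref{thm-general-char}, and Lemma \ref{lem-sXt-non-affine}. Both arguments are valid; the paper's is shorter and makes the duality structurally transparent, while yours makes explicit how the gluing formula $\xi(s):=\sum_{i}\xi_{i}\bigl((\beta_{i}s)|_{U_{i}}\bigr)$ arises and why it is well defined and restricts correctly, which is useful as a sanity check on the duality argument. One small point worth making explicit in your writeup: in the gluing step you use $\beta_{i}\cdot Ext_{U_{\alpha}}^{U}s_{\alpha}=Ext_{U_{\alpha}\cap U_{i}}^{U}\bigl((\beta_{i}\cdot Ext_{U_{\alpha}}^{U}s_{\alpha})|_{U_{\alpha}\cap U_{i}}\bigr)$, which relies on Theorem \ref{thm-general-char} (the section is flat outside $U_{\alpha}\cap U_{i}$, so restriction and extension by zero are inverse isomorphisms); stating this avoids any appearance of hand-waving when you move between $\xi_{i}$ and $\xi_{\alpha}$ via the compatibility on overlaps.
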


\begin{proof}
The claim that tempered distributions form a sheaf is dual to the
claim that Scwartz functions form a cosheaf, which we proved in Corollary
\ref{cor-Sch-cosheaf}.
\end{proof}

\section{Schwartz, Tempered and Tempered ``Distributions'' Over QN Vector
Bundles}

We begin this section with definitions of QN bundles and their sections.
Most of the definitions and results in this chapter follow {[}AG{]}
with light adjustments to our category.
\begin{defn}
Let $\pi:X\to B$ be a morphism of QN varieties. It is called a \textbf{QN
locally trivial fibration} with fiber $Z$ if the following holds: 
\end{defn}

\begin{itemize}
\item $Z$ is a QN variety.
\item There exists a \textit{finite} cover $B=\bigcup\limits _{i=1}^{n}U_{i}$
by open QN sets and QN isomorphisms $\nu_{i}:\pi^{-1}\left(U_{i}\right)\tilde{\to}U_{i}\times Z$
such that $\pi\circ\nu_{i}^{-1}$ is the natural projection.
\end{itemize}
\begin{defn}
Let $X$ be a QN variety. A \textbf{QN vector bundle $E$ over $X$}
is a QN locally trivial fibration with linear fiber and such that
the trivialization maps $\nu_{i}$ are fiberwise linear. By abuse
of notation, we use the same letters to denote bundles and their total
spaces.
\end{defn}

$\:$
\begin{defn}
Let $X$ be a QN variety and $E$ a QN bundle over $X$. A \textbf{QN
section} of $E$ is a section of $E$ which is a QN morphism.
\end{defn}

Now we can use the above definitions to define the more specific QN
bundles relevant to our work.
\begin{defn}
Let $X$ be a QN variety, and $E$ be a QN bundle over it. Let $X=\bigcup\limits _{i=1}^{k}X_{i}$
be an affine QN trivialization of $E$. A global section $s$ of $E$
over $X$ is called \textbf{tempered} if for any $i$, all the coordinate
components of $s|_{X_{i}}$ are tempered functions. The space of global
tempered sections of $E$ is denoted by $\mathcal{T}\left(X,E\right)$.
\end{defn}

\begin{rem}
As tempered functions on QN varieties form sheaves, the definition
above does not depend on the cover.
\end{rem}

\begin{defn}
\textit{A Schwartz section on a QN bundle $X$:}\textcolor{black}{{}
let $X$ be a QN variety, and let} $E$ be a QN bundle over $X$.
Let $\bigcup\limits _{i=1}^{m}X_{i}=X$ be affine QN trivialization
of $E$. Denote by $Func\left(X,E,\mathbb{R}\right)$\textcolor{black}{{}
the bundle of all real valued sections of $E$ over $X$.} There is
a natural map $\psi:\bigoplus\limits _{i=1}^{m}\mathcal{S}\left(X_{i}\right)^{n}\to Func\left(X,E,\mathbb{R}\right)$.
Define \textbf{the space of global Schwartz sections of }$E$ by $\mathcal{S}\left(X,E\right):=Im\psi$.
\end{defn}

\begin{rem}
We define the topology on the space of global Schwartz sections of
$E$ by the quotient topology, i.e. by the isomorphism $\mathcal{S}\left(X,E\right)\cong\bigoplus\limits _{i=1}^{m}\mathcal{S}\left(X_{i}\right)^{n}/Ker\psi$.
\end{rem}

$\:$
\begin{rem}
The definition above does not depend on the cover. See the proof of
Lemma \ref{lem-Sch-cover-indep}.
\end{rem}

Now we will define the local sections:
\begin{defn}
Let $X$ be a QN variety, and let $E$ be a QN bundle over it. We
define the \textbf{cosheaf $\mathcal{S}_{X}^{E}$ of Schwartz sections
of $E$ }by $\mathcal{S}_{X}^{E}\left(U\right):=\mathcal{S}\left(U,E|_{U}\right)$.
We define in a similar way the \textbf{sheaf $\mathcal{T}_{X}^{E}$
of tempered sections of $E$} by $\mathcal{T}_{X}^{E}\left(U\right):=\mathcal{T}\left(U,E|_{U}\right)$.
\end{defn}

\begin{lem}
Let $X$ be a QN variety. For an open semi-algebraic subset $U\subset X$,
$\mathcal{S}_{X}^{E}|_{U}=\mathcal{S}_{U}^{E|_{U}}$, $\mathcal{T}_{X}^{E}|_{U}=\mathcal{T}_{U}^{E|_{U}}$
.
\end{lem}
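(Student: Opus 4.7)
The plan is to unfold the definitions on an arbitrary open set and observe that both sides coincide tautologically once transitivity of bundle restriction is invoked. Fix an open semi-algebraic subset $W\subset U$. By definition of the restricted (co)sheaf and of $\mathcal{S}_X^E$, one has
\[
\mathcal{S}_X^E|_U(W)\;=\;\mathcal{S}_X^E(W)\;=\;\mathcal{S}(W,E|_W),
\]
while by definition of $\mathcal{S}_U^{E|_U}$ applied to the open subset $W\subset U$,
\[
\mathcal{S}_U^{E|_U}(W)\;=\;\mathcal{S}\bigl(W,(E|_U)|_W\bigr).
\]
The analogous formulas hold with $\mathcal{S}$ replaced by $\mathcal{T}$.

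The first step is to note that restriction of QN bundles is transitive: for $W\subset U\subset X$ the bundles $(E|_U)|_W$ and $E|_W$ coincide, since both are obtained from $E$ by pullback along the inclusion $W\hookrightarrow X$ (factorized through $U$ in one case, directly in the other), and pullback is functorial in the QN category. Consequently $\mathcal{S}(W,(E|_U)|_W)=\mathcal{S}(W,E|_W)$ and $\mathcal{T}(W,(E|_U)|_W)=\mathcal{T}(W,E|_W)$ purely as sets.

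Next I would remark that these are equalities of Fr\'echet (resp.\ just $\mathbb{R}$-linear) spaces and not merely of underlying sets, because the definition of $\mathcal{S}(W,E|_W)$ proceeds by choosing an affine QN trivializing cover of $W$ and taking the image of $\bigoplus_i\mathcal{S}(W_i)^n$ under the natural map to sections of $E|_W$; no reference to the ambient $X$ or $U$ enters. Cover-independence of this construction is exactly the content of Lemma \ref{lem-Sch-cover-indep} (and the remark following the definition of $\mathcal{S}(X,E)$), together with the analogous statement for tempered sections coming from Lemma \ref{lem-Tempered-eqiv} and Corollary \ref{cor-tempered-sheaf}. In particular, any affine QN trivializing cover of $W$ produced as the intersection of $W$ with a trivializing cover of $X$ (or of $U$) yields the same space of Schwartz, resp.\ tempered, sections.

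Putting these observations together gives $\mathcal{S}_X^E|_U(W)=\mathcal{S}_U^{E|_U}(W)$ and $\mathcal{T}_X^E|_U(W)=\mathcal{T}_U^{E|_U}(W)$ for every open $W\subset U$, and one checks (again, formally) that these identifications commute with the restriction/extension morphisms, so the equalities hold as (co)sheaves on $U$. There is no real obstacle here beyond being explicit about the transitivity $(E|_U)|_W=E|_W$ and the fact that $\mathcal{S}(W,E|_W)$ is an intrinsic invariant of $(W,E|_W)$, independent of the ambient bundle from which $E|_W$ was cut out; the whole statement is essentially a bookkeeping lemma justifying the notation $\mathcal{S}_X^E$.
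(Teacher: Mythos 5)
Your proposal is correct and is essentially a detailed unfolding of the paper's own one-line proof, which simply states that the lemma holds by definitions; the transitivity of bundle restriction and cover-independence (Lemma \ref{lem-Sch-cover-indep}) you invoke are exactly the implicit ingredients behind that assertion.
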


\begin{proof}
The lemma holds by definitions.
\end{proof}
\begin{thm}
\label{thm-bundle-char}(Characterization of Schwartz sections on
open subset - the bundle case) Let $X$ be a QN variety, and let $Z\subset X$
be some closed subset. Define $U:=X\backslash Z$ and $W_{Z}:=\left\{ \phi\in\mathcal{S}\left(X,E\right)|\phi\text{ vanish with all its derivatives on }Z\right\} $.
Then extension by zero $Ext_{U}^{X}:\mathcal{S}_{X}^{E}\left(U\right)\to W_{Z}$
is an isomorphism of Fréchet spaces, whose inverse is $Res_{U}^{X}:W_{Z}\to\mathcal{S}_{X}^{E}\left(U\right)$.
\end{thm}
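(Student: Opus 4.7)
The plan is to reduce this bundle statement to Theorem \ref{thm-general-char} by passing to a finite open cover that trivializes $E$, and then to reassemble using the cosheaf/sheaf structure of Schwartz and tempered sections already established.

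First I would fix a finite affine QN cover $X=\bigcup_{i=1}^{k}X_{i}$ over which $E$ admits QN trivializations $\nu_{i}:E|_{X_{i}}\xrightarrow{\sim}X_{i}\times\mathbb{R}^{n}$. Via $\nu_{i}$, a Schwartz section of $E$ over $X_{i}$ is identified with an $n$-tuple in $\mathcal{S}(X_{i})^{n}$, and the analogous identification holds over the open subsets $U_{i}:=U\cap X_{i}$ and $Z_{i}:=Z\cap X_{i}$. Flatness of a section on $Z$ is interpreted locally: a section $\phi$ is flat on $Z$ if and only if, in each trivialization, each component of $\nu_{i}(\phi|_{X_{i}})$ is flat on $Z_{i}$ in the sense already defined for functions; this is independent of the trivialization because transition functions are QN (hence tempered after multiplying by a cut-off), and the flatness of a function is preserved under pullback by a QN morphism, by the composite-function lemma used earlier.

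Next I would prove closedness of $W_{Z}\subset\mathcal{S}(X,E)$. Using the cover $\{X_{i}\}$, the quotient topology on $\mathcal{S}(X,E)\cong\bigoplus_{i}\mathcal{S}(X_{i})^{n}/\operatorname{Ker}\psi$ reduces the claim to checking closedness in each $\mathcal{S}(X_{i})^{n}$, where $W_{Z}$ is carved out by the intersection over $z\in Z_{i}$ of the closed conditions ``each component is flat at $z$'' (closedness of these conditions is exactly what was used in the proof of Theorem \ref{thm-general-char}). By Proposition \ref{prop-Closed-Frechet-subspace}, $W_{Z}$ is then a Fr\'echet space.

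For the isomorphism, I would construct $Ext_{U}^{X}$ and $Res_{U}^{X}$ componentwise. Applied to the $j$-th component in the trivialization $\nu_{i}$, Theorem \ref{thm-general-char} gives that extension by zero is a topological isomorphism between $\mathcal{S}(U_{i})$ and the closed subspace of functions in $\mathcal{S}(X_{i})$ flat on $Z_{i}$. Taking direct sums over $j=1,\ldots,n$ yields the corresponding isomorphism for sections over $X_{i}$, and the same partition-of-unity argument used in the proof of Theorem \ref{thm-general-char} (with tempered cut-offs $\alpha_{i}$ from Proposition \ref{prop-part-of-unity}) shows that for $\phi\in W_{Z}$, each piece $\alpha_{i}\phi$ lies in the local $W_{Z_{i}}$ and thus restricts to a Schwartz section on $U_{i}$, with $\phi|_{U}=\sum_{i}Ext_{U_{i}}^{U}((\alpha_{i}\phi)|_{U_{i}})$; conversely any Schwartz section on $U$ extends by zero to a section on $X$ that is flat on $Z$ by the local case. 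Bijectivity is then immediate, and continuity of $Ext_{U}^{X}$ follows from assembling the continuous local extension maps $\mathcal{S}(U_{i})^{n}\to\mathcal{S}(X_{i})^{n}$ and passing to the quotients; Theorem \ref{thm-Banach-open-mapping} finishes the argument.

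The main obstacle I anticipate is the invariance of flatness of a section under change of trivialization: one must check that if a section expressed in one trivialization is flat on $Z$ componentwise, the same holds in any other trivialization. This is essentially a Faà di Bruno / Lemma \ref{lem-Faa-di} computation, combined with the fact that transition matrices between QN trivializations are matrices of QN functions, hence locally restrictions of Nash maps, so multiplying a flat tuple by them preserves flatness. Once this is in place, the rest of the argument is a mechanical repackaging of the scalar theorem.
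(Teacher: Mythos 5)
Your proposal is correct and follows the same strategy as the paper, whose proof is a one-sentence reduction to Theorem \ref{thm-general-char} together with Proposition \ref{prop-part-of-unity}; you have simply unfolded that reduction via a trivializing cover, componentwise application of the scalar theorem, and reassembly by tempered partition of unity. One small imprecision: for invariance of flatness under change of trivialization the transition acts by multiplication by a matrix of QN functions, so the relevant fact is the Leibniz (product) rule applied to smooth local extensions, not the composition statement of Lemma \ref{lem-Faa-di} -- though your conclusion is of course correct.
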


\begin{proof}
This theorem follows from Theorem \ref{thm-general-char} - characterization
of Schwartz functions on open subset for general QN variety, and Proposition
\ref{prop-part-of-unity} - tempered partition of unity for general
QN variety.
\end{proof}

\appendix

\section{Preperations For Partition Of Unity}

In order to prove partition of unity we will follow the proof of Theorem
\ref{AG-thm-part-of-unity} using some definitions and lemmas, lightly
adapted to our case. We will start with some definitions, and then
show there is a certain refinement for the cover we will need later
on. 
\begin{defn}
\label{def-Basicness}1) Let $M$ be a QN variety and $F$ be a continuous
semi-algebraic function on $M$. We denote $M_{F}:=\left\{ x\in M|F\left(x\right)\neq0\right\} $.

2) Let $M$ be a QN variety. A continuous semi-algebraic function
$F$ on $M$ is called \textbf{basic} if $F|_{M_{F}}$ is a positive
QN function.

3) A collection of continuous semi-algebraic functions $\left\{ F_{i}\right\} $
is called \textbf{basic collection} if every one of them is basic,
and in every point of $M$ one of them is larger than 1.
\end{defn}

\begin{thm}
({[}S III.1.1{]}) Let $r<\infty$. A $C^{r}$ Nash manifold is affine.
Thus, by {[}AG - A.2.4{]}, a QN variety M can be continuously embedded
in $\mathbb{R}^{n}$ by a semi-algebraic map, where M and its image
are homeomorphic.
\end{thm}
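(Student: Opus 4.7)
The first assertion --- that every $C^{r}$ Nash manifold with $r<\infty$ is affine --- is Shiota's classical theorem, and I would simply invoke [S III.1.1] without reproof. My plan is therefore to concentrate on the deduced statement, that a QN variety $M$ admits a continuous semi-algebraic embedding into some $\mathbb{R}^{N}$ which is a homeomorphism onto its image.

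Starting from Definition \ref{def-gen-QN-variety}, I would first choose a finite open cover $M = \bigcup_{i=1}^{m} X_{i}$ by open affine QN subvarieties. By definition of affine QN variety, each $X_{i}$ is QN-isomorphic to a closed NQN subset $\tilde{X}_{i} \subset \mathbb{R}^{n_{i}}$; let $\psi_{i} : X_{i} \to \mathbb{R}^{n_{i}}$ denote the corresponding homeomorphism onto $\tilde{X}_{i}$. These local embeddings already give what we want on each piece; the task is to glue them into one global continuous semi-algebraic map.

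For the glueing I would build a \emph{continuous} semi-algebraic partition of unity $\{\chi_{i}\}_{i=1}^{m}$ on $M$, i.e. continuous semi-algebraic functions $\chi_{i}:M\to[0,1]$ with $\operatorname{supp}(\chi_{i})\subset X_{i}$ and $\sum_{i}\chi_{i}\equiv 1$. This is much cheaper than the tempered partition of unity of Proposition \ref{prop-part-of-unity}, because no smoothness is demanded: inside each affine piece one may construct a bump function out of a Łojasiewicz-type distance function to the complement, and then normalize the sum. Given such $\chi_{i}$, set
\[
\Phi(x) := \bigl(\chi_{1}(x)\tilde{\psi}_{1}(x),\, \chi_{1}(x),\, \dots,\, \chi_{m}(x)\tilde{\psi}_{m}(x),\, \chi_{m}(x)\bigr) \in \mathbb{R}^{N},
\]
where $\tilde{\psi}_{i}$ denotes $\psi_{i}$ extended by $0$ outside $X_{i}$ (the factor $\chi_{i}$ kills any discontinuity), and $N=\sum_{i}(n_{i}+1)$.

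Continuity and the semi-algebraic character of $\Phi$ are immediate from the construction. For injectivity, if $\Phi(x)=\Phi(y)$ then $\chi_{i}(x)=\chi_{i}(y)$ for every $i$; picking an index $i$ with $\chi_{i}(x)\neq 0$, both points lie in $X_{i}$ and $\psi_{i}(x)=\psi_{i}(y)$, whence $x=y$. Continuity of $\Phi^{-1}$ on the image follows by reading off $\psi_{i}$ from the coordinates of $\Phi$ on the open set $\{\chi_{i}>0\}$ and inverting. The only technical obstacle I anticipate is verifying existence of the continuous semi-algebraic $\chi_{i}$ in the restricted topology of $M$; once that auxiliary tool is in place, the rest of the argument is formal.
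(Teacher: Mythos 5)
The paper does not actually prove this statement: it is recorded as a theorem with two citations, [S III.1.1] for the affineness of $C^{r}$ Nash manifolds and [AG - A.2.4] for the resulting semi-algebraic embedding of $M$, and no argument is given. Your proposal is therefore a genuinely different route, attempting a direct gluing construction rather than a citation, and the construction of $\Phi$ from the local embeddings $\psi_i$ and weights $\chi_i$ is correct as far as it goes: the injectivity and continuity-of-the-inverse arguments you give are sound once the $\chi_i$ exist with $\operatorname{supp}(\chi_i)\subset X_i$ (so that $\chi_i\tilde\psi_i$ vanishes identically on a neighborhood of $M\setminus X_i$).

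The gap is in the step you flag as merely "technical," and it is more serious than you suggest: it is a circularity, not a loose end. In this paper the continuous semi-algebraic partition of unity you need is derived from the very theorem you are proving. The chain in the appendix is: embedding theorem $\Rightarrow$ Corollary \ref{cor-metric} (a global semi-algebraic metric on $M$) $\Rightarrow$ Proposition \ref{prop-proper-ref} (proper refinement) $\Rightarrow$ Proposition \ref{prop-part-of-unity} (partition of unity). Your sketch of the $\chi_i$ -- "a Łojasiewicz-type distance function to the complement" -- implicitly presupposes a distance to $M\setminus X_i$, but before the embedding is established $M$ has no ambient metric, only the local metrics coming from the charts $\tilde X_i\subset\mathbb{R}^{n_i}$, and these do not see $\partial X_i$ in $M$ (points of $\partial X_i$ are "at infinity" in $\tilde X_i$, and not all of $\tilde X_i$'s infinity corresponds to $\partial X_i$). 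A non-circular construction would have to produce the $\chi_i$ from the local Euclidean distances on the $\tilde X_i$ together with explicit control of the transition maps on overlaps; that is a substantial argument, essentially re-deriving the content of [AG - A.2.4], not a routine verification. Finally, note that your argument never uses Shiota's theorem at all, whereas the statement's "Thus, by [AG - A.2.4]" indicates that the $C^{r}$-affine result is an ingredient of that citation; if you do reconstruct the proof you should either locate where that ingredient enters or justify why your route avoids it.
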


\begin{cor}
\label{cor-metric}Let M be a QN manifold. Then there exists a semi-algebraic
continuous metric $d:M\times M\rightarrow\mathbb{R}$.
\end{cor}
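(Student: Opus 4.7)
The plan is to apply the preceding embedding theorem and simply pull back the Euclidean metric. Let $\iota : M \hookrightarrow \mathbb{R}^{n}$ be the semi-algebraic continuous map provided by the theorem, which is a homeomorphism onto its image. Define
\[
d(x,y) \;:=\; \|\iota(x) - \iota(y)\|_{\mathbb{R}^{n}}.
\]
I would take this $d$ as the candidate for the required metric and verify the two types of conditions: metric axioms, and semi-algebraicity plus continuity.

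First I would check that $d$ is a metric. Symmetry, non-negativity and the triangle inequality are immediate from the corresponding properties of the Euclidean norm on $\mathbb{R}^{n}$. The only axiom requiring anything is non-degeneracy, $d(x,y) = 0 \iff x = y$, and this follows because $\iota$ is a homeomorphism onto $\iota(M)$, hence in particular injective.

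Next I would check semi-algebraicity and continuity. The map $\iota \times \iota : M \times M \to \mathbb{R}^{n} \times \mathbb{R}^{n}$ is continuous and semi-algebraic as a product of continuous semi-algebraic maps, and the Euclidean distance $(u,v) \mapsto \|u - v\|$ on $\mathbb{R}^{n} \times \mathbb{R}^{n}$ is continuous and semi-algebraic (its square is polynomial, and the square root is semi-algebraic). The composition $d = \|\cdot\| \circ (\iota \times \iota)$ is therefore continuous and semi-algebraic on $M \times M$, completing the proof.

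There is no real obstacle here; the only mildly subtle point is ensuring that the notion of \emph{semi-algebraic} for a function on $M \times M$ is the intended one, namely via the semi-algebraic embedding of $M \times M$ into $\mathbb{R}^{2n}$ induced by $\iota$. This is built into the setup of the preceding theorem, so the corollary follows directly.
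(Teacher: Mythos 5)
Your proof is correct and is exactly the intended argument: the paper states this corollary without further proof, treating it as immediate from the preceding embedding theorem, and pulling back the Euclidean distance via the semi-algebraic homeomorphic embedding is precisely the expected route.
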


\begin{defn}
A cover $M=\bigcup\limits _{j=1}^{m}V_{j}$ is called a \textbf{proper
refinement} of the cover $M=\bigcup\limits _{i=1}^{n}U_{i}$ if for
any $j$ there exists $i$ such that $\bar{V_{j}}\subset U_{i}$.
\end{defn}

\begin{prop}
\label{prop-proper-ref}Let $M=\bigcup\limits _{i=1}^{n}U_{i}$ be
a finite open (semi-algebraic) cover of an affine QN variety M. Then
there exists a finite open (semi-algebraic) cover $M=\bigcup\limits _{j=1}^{m}V_{j}$
which is a proper refinement of $\left\{ U_{i}\right\} $.
\end{prop}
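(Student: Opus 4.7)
The plan is to mimic the standard metric shrinking trick for open covers, using the semi-algebraic metric supplied by Corollary \ref{cor-metric}. Let $d:M\times M\to\mathbb{R}$ be such a continuous semi-algebraic metric. For each $i\in\{1,\dots,n\}$ I define
\[
f_i(x):=d(x,M\setminus U_i),
\]
with the convention $f_i\equiv+\infty$ if $U_i=M$ (this case can be handled separately, or avoided by removing redundant $U_i$'s). Each $f_i$ is a continuous semi-algebraic function on $M$, strictly positive on $U_i$ and zero on $M\setminus U_i$.

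Next I set $g(x):=\max_{1\le i\le n}f_i(x)$, which is again continuous and semi-algebraic. Since the $U_i$ cover $M$, at every $x\in M$ at least one $f_i(x)>0$, so $g(x)>0$ throughout $M$. I then define
\[
V_i:=\bigl\{x\in M\;\big|\;2f_i(x)>g(x)\bigr\},\qquad i=1,\dots,n.
\]
Each $V_i$ is open semi-algebraic (it is defined by a strict inequality between continuous semi-algebraic functions), hence open in the restricted topology. The $V_i$'s cover $M$: given $x\in M$, pick $i^{*}$ with $f_{i^{*}}(x)=g(x)$; since $g(x)>0$ we have $2f_{i^{*}}(x)>f_{i^{*}}(x)=g(x)$, so $x\in V_{i^{*}}$.

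For the properness of the refinement I use that $\{x:2f_i(x)\ge g(x)\}$ is closed semi-algebraic and contains $V_i$, so $\overline{V_i}\subseteq\{x:2f_i(x)\ge g(x)\}$. On this set, $g(x)>0$ forces $f_i(x)\ge g(x)/2>0$, hence $x\in U_i$; thus $\overline{V_i}\subset U_i$ as required. The main subtleties (and where I would expect to need care rather than real obstruction) are: verifying that $f_i(x)=d(x,M\setminus U_i)$ is genuinely semi-algebraic on the whole of $M$ (it is, because the graph of the distance function to a semi-algebraic set is semi-algebraic by Tarski--Seidenberg, and the extension-by-zero across $\partial U_i$ is continuous); checking that the standard closure argument is compatible with the restricted topology (it is, since closedness of $\{2f_i\ge g\}$ is an intrinsic semi-algebraic statement, and in the restricted topology closures are defined via complements of opens); and, if one wants, replacing $\max$ by a smooth semi-algebraic surrogate, which is not needed here because we only require continuity.
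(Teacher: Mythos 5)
Your proof is correct and follows essentially the same route as the paper: both define $f_i(x)=d(x,M\setminus U_i)$ from the semi-algebraic metric of Corollary \ref{cor-metric} and shrink each $U_i$ to $V_i=\{x: f_i(x)>\text{(threshold)}\}$. The only difference is cosmetic — the paper uses the averaged threshold $G=(\sum_i f_i)/(2n)$ where you use $g/2=\tfrac12\max_i f_i$; both are positive continuous semi-algebraic functions dominated by $\max_i f_i$, so the coverage and properness arguments are identical. Your writeup is, if anything, a bit more explicit than the paper's in checking $\overline{V_i}\subset\{2f_i\ge g\}\subset U_i$ and in flagging the $U_i=M$ edge case.
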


\begin{proof}
Let $d$ be the metric from corollary \ref{cor-metric}. If a set
$A$ is closed in the classical topology, then the distance $d\left(x,A\right):=\inf\limits _{y\in A}d\left(x,y\right)$
is strictly positive for all points $x$ outside $A$. Now define
$F_{i}:M\rightarrow\mathbb{R}$ by $F_{i}\left(x\right)=d\left(x,M\backslash U_{i}\right)$.
It is semi-algebraic by the Tarski-Seidenberg principle. Define $G=\left(\sum\limits _{i=1}^{n}F_{i}\right)/2n$
and $V_{i}=\left\{ x\in M|F_{i}\left(x\right)>G\left(x\right)\right\} $.
It is easy to see that $V_{i}$ is a proper refinement of $U_{i}$.
\end{proof}
\begin{thm}
(finiteness) Let $X\subset\mathbb{R}^{n}$ be a semi-algebraic set.
Then every open semi-algebraic subset of $X$ can be presented as
a finite union of sets of the form 
\[
\left\{ x\in X|p_{i}\left(x\right)>0,\:i=1,...,n\right\} ,
\]
where $p_{i}$ are polynomials in n variables.
\end{thm}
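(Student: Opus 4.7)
The plan is to prove this Finiteness Theorem for open semi-algebraic sets via semi-algebraic cell decomposition, a classical tool available in [BCR, Ch.\ 9]. Given $U\subset X$ open and semi-algebraic, by definition $U$ is a finite Boolean combination of polynomial sign conditions $\{p>0\}$, $\{p=0\}$, and $\{p<0\}$. Let $\mathcal{P}\subset\mathbb{R}[x_1,\ldots,x_n]$ be a finite collection containing all polynomials appearing in these conditions as well as polynomials needed to describe $X$ itself. Applying cell decomposition to $\mathcal{P}$ produces a finite partition $\mathbb{R}^n=\bigsqcup_{\alpha}C_{\alpha}$ into connected semi-algebraic cells, each of which is sign-invariant for every polynomial in $\mathcal{P}$. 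Since both $U$ and $X$ are Boolean combinations of sign conditions on $\mathcal{P}$, each cell $C_{\alpha}$ is either entirely contained in $U$ (resp.\ $X$) or entirely disjoint from it; hence $U$ equals the union of the finitely many cells $C_{\alpha}\subset U$.

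Next I would isolate those cells in $U$ which happen to be open in $X$. Such a cell is described purely by strict inequalities from $\mathcal{P}$ intersected with $X$ --- after replacing $p$ by $-p$ when needed to turn $\{p<0\}$ into $\{-p>0\}$ --- so it is already of the required form $\{x\in X\mid p_1(x)>0,\ldots,p_k(x)>0\}$. For cells in $U$ of strictly lower dimension than $X$, I would use openness of $U$ in $X$: each point $p$ of such a cell $C\subset U$ has a neighborhood $V\subset X$ entirely contained in $U$, and since $V$ meets only finitely many cells, each sign-invariant on $\mathcal{P}$, at least one of these is open in $X$ and contained in $V\subset U$. Carrying out this absorption argument across $C$ shows that $C$ is covered by the closures (intersected with $X$) of open-in-$X$ cells already lying in $U$. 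Consequently $U$ equals the finite union of the open-in-$X$ cells inside it, each of the desired form.

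The main obstacle is the absorption step: verifying that at every point $p$ of a lower-dimensional cell $C\subset U$ one can find an open-in-$X$ cell contained in $U$ whose closure contains $p$. This requires the local conic structure of $X$ at $p$, namely that a neighborhood of $p$ in $X$ is, up to semi-algebraic homeomorphism, a cone over the link of $p$ in $X$, so the top-dimensional strata of $X$ near $p$ --- which are forced to belong to $V\subset U$ --- provide the required open cells. If one wished to sharpen the statement so that each basic open set uses at most $n=\dim\mathbb{R}^n$ polynomials (the Br\"ocker--Scheiderer sharp bound), a much more delicate real-spectrum argument would be required; but since only finiteness of the union is needed here, the cell decomposition route suffices.
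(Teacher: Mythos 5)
The paper does not prove this theorem; it is being cited as the classical Finiteness Theorem (essentially [BCR, Theorem 2.7.2]). So the question is whether your proposed proof actually works, and it does not.

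The fatal gap is in the last line of your absorption step: \emph{``Consequently $U$ equals the finite union of the open-in-$X$ cells inside it.''} This does not follow from anything before it, and it is simply false. The thin cells of $U$ lie only in the \emph{closures} of the open cells, never in the open cells themselves, so the union of the open cells omits them. Already the simplest example breaks the argument: take $X=U=\mathbb{R}$ and $\mathcal{P}=\{x\}$. The sign-invariant cells are $(-\infty,0)$, $\{0\}$, $(0,\infty)$; the open ones contained in $U$ are $(-\infty,0)$ and $(0,\infty)$, whose union is $\mathbb{R}\setminus\{0\}\neq U$. To express $U$ as a finite union of basic open sets you are forced to introduce a new polynomial (here $p=1$, giving $U=\{1>0\}$) that was not in $\mathcal{P}$ and does not arise from the cell decomposition. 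This is not a small fix-up: producing the right new inequalities that simultaneously absorb the thin strata \emph{and} stay inside $U$ is the entire content of the theorem, and it is precisely what the local-openness observation fails to deliver --- knowing that a neighborhood of a thin point is in $U$ tells you which cells touch that neighborhood, but gives no polynomial inequality that captures the thin point and is still satisfied only on $U$.

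There is a secondary problem in the step before: cells of a cylindrical algebraic decomposition adapted to $\mathcal{P}$ are sign-invariant for $\mathcal{P}$, but they are \emph{not} described by sign conditions on $\mathcal{P}$; their defining data involves root functions of auxiliary projection polynomials and a cylindrical (not conjunctive) structure, so ``open-in-$X$ cell'' $\Rightarrow$ ``basic open set in $\mathcal{P}$'' does not hold. One can sometimes rewrite such cells as basic open sets with new polynomials, but this is again non-trivial and reintroduces the same difficulty. The standard proofs circumvent these issues either via a compactness argument in the real spectrum or via a careful dimension induction (see [BCR, \S 2.7]); a cell decomposition plus local openness is not enough.
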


\begin{cor}
\label{cor-basis-Mf}Let M be an \textup{affine} QN variety. Then
it has a basis of open sets of the form $M_{F}$ where F is a basic
function.
\end{cor}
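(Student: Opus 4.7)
The plan is to produce, for every open semi-algebraic $U\subset M$ and every $x\in U$, a basic function $F$ with $x\in M_F\subset U$. First I would use Remark \ref{Rem-NQN-Like} to identify $M$ with a closed semi-algebraic subset $\tilde M\subset\mathbb{R}^n$, and then invoke the finiteness theorem stated immediately above to write $U=\bigcup_{j=1}^{m}W_j$ with
\[
W_j=\{y\in\tilde M\mid p_{j,1}(y)>0,\ldots,p_{j,n_j}(y)>0\}
\]
for polynomials $p_{j,i}\in\mathbb{R}[y_1,\ldots,y_n]$. Fix $j$ so that $x\in W_j$.

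Since each $p_{j,i}$ is continuous in the ambient Euclidean topology and $p_{j,i}(x)>0$, there is an $r>0$ such that the Euclidean open ball $B(x,r)\subset\mathbb{R}^n$ satisfies $p_{j,i}(y)>0$ for every $y\in B(x,r)$ and every $i$. In particular $B(x,r)\cap\tilde M\subset W_j\subset U$. Now define
\[
F(y):=\max\bigl(0,\,r^2-|y-x|^2\bigr),\qquad y\in\tilde M.
\]
Then $F$ is continuous and semi-algebraic, $x\in M_F$, and $M_F=\{y\in\tilde M\mid |y-x|<r\}\subset U$.

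To check that $F$ is basic, I would observe that on $M_F$ we have $F(y)=r^2-|y-x|^2$, which is the restriction of a polynomial on $\mathbb{R}^n$. Since $B(x,r)$ is an open semi-algebraic neighborhood and $M_F=B(x,r)\cap\tilde M$, this polynomial restricts to a QN function on $M_F$ by Remark \ref{Rem-NQN-Like}, and it is strictly positive there. Hence $F$ is basic and the family $\{M_F\mid F\text{ basic}\}$ is a basis for the restricted topology of $M$. I do not expect any genuine obstacle: the three ingredients are the finiteness theorem quoted just above, ordinary Euclidean continuity of polynomials, and the trivial fact that the restriction of a polynomial is QN.
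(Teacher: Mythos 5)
Your construction produces a basic open neighbourhood of each point, but that is the wrong notion of ``basis'' for this setting, and the gap is exactly where the finiteness theorem is supposed to do its work. This corollary lives in the \emph{restricted} topology (Definition \ref{AG-def-rest-top}), where only \emph{finite} unions of open sets are permitted; accordingly, ``basis of open sets of the form $M_F$'' has to mean that every open semi-algebraic $U\subset M$ is a \emph{finite} union of sets $M_F$. Indeed this is how the corollary is used in the proof of Proposition \ref{prop-part-of-unity}: one needs a finite collection $\{G_{ijk}\}$ of basic functions with $\bigcup_k V'_{ijk}=V'_{ij}$, and the subsequent argument sums and re-indexes over that finite collection. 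Your balls $M_F=\{y\in\tilde M:|y-x|<r_x\}$ are one per point $x\in U$, so you produce an infinite family, and a general open semi-algebraic set (e.g.\ a non-compact one) admits no finite subcover by such Euclidean balls. Thus you have proved the classical-topology statement, not the restricted-topology one.

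The finiteness theorem already hands you exactly the finite decomposition you need; the missing step is to turn each piece into a single $M_F$ rather than refining it further. Writing $U=\bigcup_{j=1}^{m}W_j$ with $W_j=\{x\in\tilde M\mid p_{j,1}(x)>0,\ldots,p_{j,n_j}(x)>0\}$, set $F_j:=\prod_{i=1}^{n_j}\max(0,p_{j,i})$. Then $F_j$ is continuous and semi-algebraic, $M_{F_j}=W_j$, and on $M_{F_j}$ one has $F_j=\prod_i p_{j,i}$, a positive polynomial, hence a positive QN function; so each $F_j$ is basic and $U=\bigcup_{j=1}^{m}M_{F_j}$ is the required \emph{finite} union. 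Your observation that $\max(0,\cdot)$ of a polynomial gives a basic function is the right idea; you just need to apply it to the finitely many $W_j$'s globally (via the product over the $p_{j,i}$) instead of shrinking to balls around individual points.
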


\begin{lem}
\label{lem-majoration} ({[}AG - Lemma A.2.1 + Lemma 2.2.11{]}) Let
M be an \textup{affine} QN variety. Then any continuous semi-algebraic
function on it can be majorated by a QN function, and any continuous
strictly positive semi-algebraic function on it can be bounded from
below by a strictly positive QN function.
\end{lem}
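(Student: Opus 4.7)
The plan is to reduce the statement to the cited results on affine Nash manifolds, namely [AG - Lemma A.2.1] and [AG - Lemma 2.2.11], by using the fact that an affine QN variety naturally sits as a closed subset of an affine Nash manifold. Concretely, by Remark \ref{Rem-NQN-Like} I would realize $M$ as a closed semi-algebraic subset $\tilde{M}$ of some open semi-algebraic $U \subset \mathbb{R}^n$, where $U$ itself is (Nash-diffeomorphic to) a closed affine Nash submanifold of some $\mathbb{R}^N$ and in particular an affine Nash manifold to which the AG results apply directly. Restrictions of Nash functions from $U$ to $\tilde{M}$ are QN by construction, so once I produce the required Nash bound on $U$ I will simply restrict.

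For the first assertion, given a continuous semi-algebraic $f : M \to \mathbb{R}$, I would first extend $|f|$ to a continuous semi-algebraic function $\hat{f}$ on $U$ using the semi-algebraic Tietze extension theorem (see [BCR, Prop.~2.6.9]); this is valid because $\tilde{M}$ is closed in $U$. Then [AG - Lemma A.2.1] on the affine Nash manifold $U$ yields a Nash function $G$ on $U$ with $\hat{f} \leq G$, and $G|_{\tilde{M}}$ is the desired QN majorant of $f$.

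For the second assertion, given a continuous strictly positive semi-algebraic $f$ on $M$, the function $1/f$ is again continuous semi-algebraic. Extend $1/f$ to a continuous semi-algebraic function $h$ on $U$ by the same Tietze-type extension, and majorate $|h|$ by a strictly positive Nash function $H$ on $U$ using [AG - Lemma A.2.1] (strict positivity can be arranged, e.g.\ by replacing $H$ with $H + 1$). Then $1/H$ is a strictly positive Nash function on $U$, its restriction to $\tilde{M}$ is QN, and on $\tilde{M}$ we have $1/H \leq 1/|h| \leq f$, giving the required positive QN lower bound.

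The main obstacle is the extension step: one needs that a continuous semi-algebraic function on a closed semi-algebraic subset of an affine Nash manifold extends to a continuous semi-algebraic function on the whole manifold. For $U \subset \mathbb{R}^n$ this follows from the semi-algebraic Tietze theorem in $\mathbb{R}^n$ after embedding, and transports to $U$ via the Nash diffeomorphism with a closed submanifold of $\mathbb{R}^N$. Once this extension is in hand, everything else is an immediate application of the AG lemmas on a Nash manifold followed by restriction, which by Definition \ref{Def-Quasy-Nash} automatically produces QN functions on $M$.
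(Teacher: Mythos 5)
Your proof is correct and fills in exactly the reduction that the paper leaves implicit by citing the AG lemmas: realize the affine QN variety as a closed semi-algebraic subset of a Nash manifold, use semi-algebraic Tietze extension to lift the function, apply the AG majoration result there, and restrict; the second assertion is reduced to the first via $1/f$. One small simplification worth noting: by Definition \ref{Def-Quasy-Nash} the corresponding set $\tilde{M}$ is already closed in $\mathbb{R}^n$, so you can take $U = \mathbb{R}^n$ directly and skip the detour through Remark \ref{Rem-NQN-Like} and the Nash diffeomorphism to a closed submanifold of $\mathbb{R}^N$; otherwise the argument is sound and the resulting restriction of a Nash function from $\mathbb{R}^n$ to $\tilde{M}$ is indeed a QN (in fact NQN) function as required.
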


\end{document}